\newtheorem{theorem}{Theorem}
\newtheorem{corollary}[theorem]{Corollary}
\newtheorem{definition}[theorem]{Definition}
\newtheorem{lemma}[theorem]{Lemma}
\newtheorem{claim}[theorem]{Claim}
\newtheorem{question}[theorem]{Question}
\newtheorem{fact}[theorem]{Fact}
\newtheorem{NonGlobalClaim}{Claim}[theorem]  
\newcommand{\MyBoxWidth}{0.75}  
\begin{document}
\title[Martin's Maximum and tower forcing]{Martin's Maximum and tower forcing}

\author{Sean Cox and Matteo Viale}
\email{sean.cox@uni-muenster.de}
\address{
Institut f\"ur mathematische Logik und Grundlagenforschung \\
Universit\"at M\"unster \\
Einsteinstrasse 62 \\
48149 M\"unster \\
Tel.: +49-251-83-33 790 \\
Fax: +49-251-83-33 078
}



\begin{abstract}
There are several examples in the literature showing that compactness-like properties of a cardinal $\kappa$ cause poor behavior of some generic ultrapowers which have critical point $\kappa$ (Burke~\cite{MR1472122} when $\kappa$ is a supercompact cardinal;  Foreman-Magidor~\cite{MR1359154} when $\kappa = \omega_2$ in the presence of strong forcing axioms).  We prove more instances of this phenomenon.  First, the Reflection Principle (RP) implies that if $\vec{\mathcal{I}}$ is a tower of ideals which concentrates on the class $GIC_{\omega_1}$ of $\omega_1$-guessing, internally club sets, then $\vec{\mathcal{I}}$ is not presaturated (a set is $\omega_1$-guessing iff its transitive collapse has the $\omega_1$-approximation property as defined in Hamkins~\cite{MR2540935}).  This theorem, combined with work from \cite{VW_ISP}, shows that if $PFA^+$ or $MM$ holds and there is an inaccessible cardinal, then there is a tower with critical point $\omega_2$ which is not presaturated; moreover this tower is significantly different from the non-presaturated tower already known (by  Foreman-Magidor~\cite{MR1359154}) to exist in all models of Martin's Maximum.  The conjunction of the Strong Reflection Principle (SRP) and the Tree Property at $\omega_2$ has similar implications for towers of ideals which concentrate on the wider class $GIS_{\omega_1}$ of $\omega_1$-guessing, internally stationary sets.  

Finally, we show that the word ``presaturated'' cannot be replaced by ``precipitous'' in the theorems above: Martin's Maximum (which implies SRP and the Tree Property at $\omega_2$) is consistent with a precipitous tower on $GIC_{\omega_1}$.

\end{abstract}

\maketitle

\section{Introduction}

If the universe $V$ of sets satisfies ZFC, there is no elementary embedding $j: V \to N$ where $N$ is wellfounded and the least ordinal moved by $j$ is ``small'' (like $\omega_1$ or $\omega_2$).  Forcing with \emph{ideals} and \emph{towers of ideals} are two procedures that can potentially produce such an embedding $j: V \to N$ \emph{in some generic extension of $V$} where the least ordinal moved by $j$ is small.  A tower of ideals is a sequence of ideals $\vec{I} = \langle I_\lambda \ | \ \lambda < \delta \rangle$ with a certain coherence property (see Section \ref{sec_TowersMeasuresIdeals}).  The length of the sequence $\vec{I}$ is called the \emph{height of  $\vec{I}$} and, if each ideal in the sequence has the same completeness,\footnote{This will hold for all towers considered in this paper.} this completeness is called the \emph{critical point of $\vec{I}$}.  If $\vec{I}$ is a tower then there is a natural poset $\mathbb{P}_{\vec{I}}$ associated with $\vec{I}$, and in the generic extension $V^{\mathbb{P}_{\vec{I}}}$ there is an embedding $j_G: V \to ult(V,G)$ where the least ordinal moved by $j$ equals the critical point of the tower; this embedding is called a \emph{generic ultrapower of $V$ by $\vec{I}$} and $ult(V,G)$ is not necessarily wellfounded.    

Properties of the tower $\vec{I}$ and of its height affect the properties of the generic ultrapower.  Woodin proved that if $\delta$ is a Woodin cardinal, then many of the natural ``stationary towers'' of height $\delta$ satisfy a very strong property called \emph{presaturation} (see \cite{MR2069032} and \cite{MR1713438}).   Foreman and Magidor~\cite{MR1359154} proved that if $\delta$ is a supercompact cardinal then there are several natural stationary towers of height $\delta$ which are precipitous (a property weaker than presaturation).  For simplicity let us only consider towers with critical point $\omega_2$.  Then one key difference between the Foreman-Magidor stationary towers and the Woodin stationary towers are that with Woodin's examples, there are always some $V$-regular cardinals which become $\omega$-cofinal in the generic ultrapower; whereas they remain uncountably cofinal in the generic ultrapower in the Foreman-Magidor examples.  

On the other hand, compactness-like properties of the critical point of the tower can often \emph{prevent} nice behavior of the tower.  Burke~\cite{MR1472122} showed that, if $\kappa$ is supercompact and $\delta > \kappa$ is inaccessible, then there is a tower of height $\delta$ with critical point $\kappa$ which is not precipitous.  Strong forcing axioms like the Proper Forcing Axiom and Martin's Maximum are known to make $\omega_2$ behave much like a supercompact cardinal; so in light of Burke's theorem we should expect that strong forcing axioms prevent nice behavior of some towers with critical point $\omega_2$.  Foreman and Magidor~\cite{MR1359154} proved that Martin's Maximum\footnote{really, just the saturation of $NS_{\omega_1}$} implies that a certain natural tower\footnote{Namely, the stationary tower concentrating on the $\omega_1$-internally approachable structures.} with critical point $\omega_2$ is \emph{not} presaturated (see Example 9.16 of \cite{MattHandbook}).\footnote{However, their tower can be precipitous, and in fact always is precipitous if its height is a supercompact cardinal}  On a related note, they also showed that the Proper Forcing Axiom implies that there is \emph{no} presaturated ideal on $\omega_2$.     
  
This paper provides more results along the lines of the Burke and Foreman-Magidor theorems, that compactness properties of the critical point of certain towers prevents nice behavior of the tower.  We show that under strong forcing axioms, there are certain towers with critical point $\omega_2$ which are not presaturated; these towers are significantly different from the non-presaturated towers produced in Foreman-Magidor~\cite{MR1359154} in a very strong sense.\footnote{Namely, while the Foreman-Magidor tower concentrated on internally approachable structures, our ideals concentrate on structures which are definitely not internally approachable.}  Specifically:

\begin{theorem}\label{thm_MainGIC}
Assume $RP([\omega_2]^\omega)$ holds.  Whenever $\vec{I}$ is a tower which concentrates on the class $GIC_{\omega_1}$ of $\omega_1$-guessing, internally club sets, then $\vec{I}$ is not presaturated.
\end{theorem}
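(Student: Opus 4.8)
Assume toward a contradiction that $\vec{I}$ is presaturated, of height $\delta$, and force with $\mathbb{P}_{\vec{I}}$ to obtain a $V$-generic $G$ and the generic ultrapower $j\colon V \to M = \mathrm{ult}(V,G)$. Presaturation gives: $M$ is transitive, $\mathrm{crit}(j) = \omega_2$, $j(\omega_2) = \delta = \omega_2^M = \omega_2^{V[G]}$, $\delta$ stays regular in $V[G]$, and $M$ is closed under $<\delta$-sequences in $V[G]$ (so $\omega_1$ is preserved, and $j$ fixes every ordinal $<\omega_2$ together with every subset of such an ordinal). I would fix a regular cardinal $\lambda$ with $\omega_2^V < \lambda < \delta$ and work with the generic seed $S := \{\, j(x) \mid x \in H_\lambda^V \,\} \in M$. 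Concentration of $\vec{I}$ on $GIC_{\omega_1}$ means exactly that $M$ satisfies ``$S$ is $\omega_1$-guessing and internally club'', that $|S|^M = \omega_1$, and that the transitive collapse of $S$ is $H_\lambda^V$ with collapsing map $\big(j\restriction H_\lambda^V\big)^{-1}$, which lies in $M$ by $<\delta$-closure. The strategy is to play these two properties of $S$, inside $M$, off against $RP([\omega_2]^\omega)$.

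First I would decode $\omega_1$-guessing. Since $j\restriction \omega_2^V$ is the identity and $S \cap \delta = \omega_2^V$, every $Y \in M$ with $Y \subseteq \omega_2^V$ is (the intersection with $S$ of) a subset of the element $\delta \in S$, is fixed by the relevant part of $j$, and a ``guess'' for $Y$ inside $S$ collapses to an element of $H_\lambda^V \subseteq V$. Unwinding this -- and using that a countable subset of $\omega_2^V$ lies in $H_\lambda^V$ iff it lies in $V$ -- yields: $(V,M)$ has the $\omega_1$-approximation property at $\omega_2^V$, i.e.\ \emph{every $Y \in M$ with $Y \subseteq \omega_2^V$ such that $Y \cap b \in V$ for all $b \in \big([\omega_2^V]^\omega\big)^V$ already belongs to $V$}. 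Next I would decode internally club: pushing a club $C \in M$ witnessing ``$[S]^\omega \cap S$ contains a club'' through the collapse and intersecting with $\omega_2^V$, I obtain, in $M$, a $\subseteq$-increasing continuous chain $\langle a_\xi \mid \xi < \omega_1 \rangle$ of countable subsets of $\omega_2^V$ with union $\omega_2^V$ and with $a_\xi \in V$ for $\xi$ in a club $E \subseteq \omega_1$. (Note $\mathbb{P}_{\vec{I}}$ is collapsing $\omega_2^V$ to $\omega_1$ and will in general add new countable sets of ordinals -- that is precisely what makes the ``internally club'' hypothesis nonvacuous -- so the chain itself, and its proper initial segments, need not be in $V$.)

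The main step is to use $RP([\omega_2]^\omega)$ to build, out of this chain, a set $Y \subseteq \omega_2^V$ with $Y \in M \setminus V$ that is nevertheless $V$-approximated in the above sense, contradicting the approximation property. The naive candidate is a diagonal set recording one new point from each successive pair $a_\xi \subsetneq a_{\xi'}$ along $E$; such a $Y$ is automatically not in $V$ as long as the filtration is genuinely new, and for any ground-model countable $b \subseteq \omega_2^V$ the value $Y \cap b$ is determined by an initial segment of the filtration. The difficulty -- and the place where reflection is essential -- is that $Y \cap b$ must be in $V$ for \emph{every} such $b$, which requires club-many proper initial segments $\langle a_\eta \mid \eta \le \xi\rangle$ of the filtration to lie in $H_\lambda^V$; equivalently, it requires upgrading ``internally club'' to ``internally approachable'' for the seed $S$ inside $M$. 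Here $RP([\omega_2]^\omega)$ enters (it holds in $M$ as well, by elementarity and $j(\omega_2)=\omega_2^M$): reflection is used to thin the filtration so that its initial segments are captured by $V$. Phrased dually, the same fact says that under $RP([\omega_2]^\omega)$ the internally club, $\omega_1$-guessing seed $S$ is forced to be internally approachable in $M$; but an internally approachable structure of size $\omega_1$ with $\omega_1$ as a subset cannot be $\omega_1$-guessing -- from an $\in$-continuous approachable filtration one reads off a subset of the structure that it approximates but that none of its elements guesses -- and this contradicts $S \in (GIC_{\omega_1})^M$.

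I expect the reflection step to be the crux: making $RP([\omega_2]^\omega)$ actually perform the passage from ``internally club'' to ``internally approachable'' for the generic seed (equivalently, pinning down which countable $V$-restrictions of the diagonal set are captured by $V$), while keeping precise track of the asymmetry in what $\mathbb{P}_{\vec{I}}$ adds over $V$ -- it disturbs subsets of $\omega_2^V$, since it collapses $\omega_2^V$, but, by the approximation property extracted from guessing, only in a ``non-approximated'' way -- since it is this asymmetry, rather than mere wellfoundedness of $\mathrm{ult}(V,G)$, that separates presaturation from precipitousness and is the reason the theorem fails with ``precipitous'' in place of ``presaturated''. A secondary technical point is to run everything at a level $\lambda<\delta$, so that $S$ and the collapsing map genuinely belong to $M$; the analogous object $\{\,j(x)\mid x\in H_\delta^V\,\}$ may fail to lie in $M$ since $\delta$ remains regular in $V[G]$.
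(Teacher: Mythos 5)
Your two decoding steps are correct and identify a real phenomenon: for regular $\lambda$ with $\omega_2^V<\lambda<\delta$ the seed $j " H_\lambda^V$ lies in $M$ and collapses to $H_\lambda^V$, so $M\models H_\lambda^V\in GIC_{\omega_1}$, and from this one can indeed extract both a $(V,M)$-approximation property at $\omega_2^V$ (from guessing) and a continuous $\omega_1$-filtration of $\omega_2^V$ by $V$-countable sets (from internal clubness). The gap is in the reflection step, and it is fatal as stated. You assert that $RP([\omega_2]^\omega)$ --- which does hold in $M$ by elementarity --- upgrades the seed from internally club to internally approachable, and you then conclude via $G_{\omega_1}\cap IA_{\omega_1}=\emptyset$. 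But $RP([\omega_2]^\omega)$ cannot imply that every $GIC_{\omega_1}$-structure is $IA_{\omega_1}$: that would make $GIC_{\omega_1}$ empty, while $PFA^{+}$ yields both $RP$ and stationarily many $GIC_{\omega_1}$-submodels of every $H_\theta$ (this is exactly the setting of Theorem \ref{thm_PFAplus_impliesNonPresatGIC}). So $M$ satisfying both $RP$ and $H_\lambda^V\in GIC_{\omega_1}$ is not a contradiction, and the diagonal set $Y$ you want has no reason to be $V$-approximated: the initial segments of the filtration genuinely need not lie in $V$, and $RP$ inside $M$ gives you no control over that (which initial segments land in $V$ is not even a statement $M$ can express).

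The paper's argument for Theorem \ref{thm_MainGIC} goes a quite different way, entirely in $V$ rather than inside the generic ultrapower. Presaturation is first weakened (Corollary \ref{cor_PresatImpliesWeakChang}, Lemma \ref{lem_LessDelta}) to the existence of a single normal ideal concentrating on $GIC_{\omega_1}$ that \emph{bounds its completeness}; Theorem \ref{thm_RP_Ideal} then refutes this under $RP([\omega_2]^\omega)$. The engine of Theorem \ref{thm_RP_Ideal} is the Isomorphism Theorem \ref{thm_IsomIS}: for a fixed $\alpha\subset\omega_2$, the transitive collapses $H_M$ of $GIC_{\omega_1}$-models with $M\cap\omega_2=\alpha$ are linearly ordered by the hereditary-initial-segment relation, so their union $W$ is a transitive $ZFC$ model, of height $\omega_2$ by the bounding hypothesis, with $W\cap[\beta]^\omega$ containing a club for every $\beta<\omega_2$; since $\mathbb{R}\setminus W\neq\emptyset$, Velickovic's coding theorem (Theorem \ref{thm_Velick}, Corollary \ref{cor_ToVelick}) makes $[\omega_2]^\omega\setminus W$ projective stationary, and $RP([\omega_2]^\omega)$ reflects it into some $[\beta]^\omega$, contradicting the club. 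Your proposal works at a single fixed $\lambda<\delta$ and has no analogue of the Isomorphism Theorem or of Velickovic's coding; those are precisely what make $RP$ bite here, and some substitute for them would be needed to push your route through.
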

In fact we show more:  that $RP([\omega_2]^\omega)$ implies there is no \emph{single} ideal $J$ such that:
\begin{itemize}
 \item $J$ concentrates on $GIC_{\omega_1}$; and
 \item $J$ bounds its completeness.
\end{itemize} 
The latter is a property introduced by the first author in \cite{Cox_MALP} which is closely related to saturation and Chang's Conjecture.  See Definition \ref{def_IdealBoundsCompleteness} and Theorem \ref{thm_RP_Ideal}.  

Using Theorem \ref{thm_MainGIC} and results from \cite{VW_ISP}, we show:
\begin{theorem}\label{thm_PFAplus_impliesNonPresatGIC}
Assume either $PFA^+$ or $MM$, and that $\delta$ is inaccessible.  Then there is a tower of height $\delta$ with critical point $\omega_2$ which is not presaturated (in fact, this tower even fails to have the \emph{weak Chang Property}; see Definition \ref{def_weakChangProperty} and Corollary \ref{cor_PresatImpliesWeakChang}).
\end{theorem}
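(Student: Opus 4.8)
The plan is to combine Theorem~\ref{thm_MainGIC} with the analysis of $\omega_1$-guessing, internally club structures under forcing axioms carried out in \cite{VW_ISP}; no new combinatorics is needed, only an assembly of those two inputs. The argument has three parts. First, the reflection hypothesis of Theorem~\ref{thm_MainGIC} is available: $MM$ implies $SRP$, hence $RP([\omega_2]^\omega)$, while $PFA$ already implies that every stationary subset of $[\lambda]^\omega$ reflects to a set of size $\aleph_1$, so $PFA^+$ implies $RP([\omega_2]^\omega)$ as well. Second, one must produce an actual tower of height $\delta$, with critical point $\omega_2$, concentrating on $GIC_{\omega_1}$. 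Third, Theorem~\ref{thm_MainGIC}---in its sharper ideal-theoretic form, Theorem~\ref{thm_RP_Ideal}---then applies to that tower.

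For the second (and only nontrivial) part I would invoke \cite{VW_ISP}: each of $PFA^+$ and $MM$ implies not merely that there are stationarily many $\omega_1$-guessing, internally club $M \prec H_\theta$, but that $GIC_{\omega_1}$ is ``large'' at every level in the coherent fashion a stationary tower requires, i.e., the traces $GIC_{\omega_1} \cap [V_\lambda]^{\omega_1}$ are stationary for all $\lambda$ and are compatible, modulo the nonstationary ideal, under the projections $x \mapsto x \cap V_\lambda$ and their inverses. Granting this, fix an inaccessible $\delta$ and for $\omega_2 \le \lambda < \delta$ put
\[
  \mathcal{I}_\lambda \;=\; \bigl\{\, A \subseteq [V_\lambda]^{\omega_1} \;:\; A \cap GIC_{\omega_1} \text{ is non-stationary in } [V_\lambda]^{\omega_1} \,\bigr\},
\]
and let $\vec{\mathcal{I}} = \langle \mathcal{I}_\lambda : \omega_2 \le \lambda < \delta \rangle$. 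Since $\delta$ is inaccessible, $V_\delta \models \mathrm{ZFC}$ and each $V_\lambda$ $(\lambda < \delta)$ has size $<\delta$, so $\vec{\mathcal{I}}$ is a tower of height $\delta$ in the sense of Section~\ref{sec_TowersMeasuresIdeals}: each $\mathcal{I}_\lambda$ is proper (stationarity of $GIC_{\omega_1}$), is $\omega_2$-complete (the club filter on $[V_\lambda]^{\omega_1}$ is normal and $\omega_2$-complete), and the coherence of the tower is exactly the projection-compatibility of the traces of $GIC_{\omega_1}$. By construction $[V_\lambda]^{\omega_1} \setminus GIC_{\omega_1} \in \mathcal{I}_\lambda$, so $\vec{\mathcal{I}}$ concentrates on $GIC_{\omega_1}$; and since every member of $GIC_{\omega_1}$ has size $\omega_1$ and contains $\omega_1$, the common completeness of the $\mathcal{I}_\lambda$, hence the critical point of $\vec{\mathcal{I}}$, is exactly $\omega_2$ (the generic ultrapower by $\vec{\mathcal{I}}$ has critical point $\omega_2$ because its conditions concentrate on structures $M$ with $M \cap \omega_1 = \omega_1$ but $M \cap \omega_2 \neq \omega_2$).

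With $RP([\omega_2]^\omega)$ from the first part and $\vec{\mathcal{I}}$ concentrating on $GIC_{\omega_1}$ from the second, Theorem~\ref{thm_MainGIC} gives at once that $\vec{\mathcal{I}}$ is not presaturated. For the stronger assertion---that $\vec{\mathcal{I}}$ fails even the weak Chang Property of Definition~\ref{def_weakChangProperty}---I would use the refinement Theorem~\ref{thm_RP_Ideal}: under $RP([\omega_2]^\omega)$ no ideal concentrating on $GIC_{\omega_1}$ bounds its completeness, and, by Corollary~\ref{cor_PresatImpliesWeakChang} together with the relationship (developed around it) between bounding of completeness, the weak Chang Property, and presaturation, this is already incompatible with any tower concentrating on $GIC_{\omega_1}$ having the weak Chang Property. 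Hence $\vec{\mathcal{I}}$ has neither property.

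The main obstacle is the second part. All the genuinely hard work lives either in Theorem~\ref{thm_MainGIC}/Theorem~\ref{thm_RP_Ideal} or in \cite{VW_ISP}; the delicate point in putting them together is to check that the stationarity of $GIC_{\omega_1}$ supplied by the forcing axiom is of the genuinely coherent, level-by-level kind needed for a \emph{nontrivial} tower of ideals of full height $\delta$ with critical point $\omega_2$---not merely ``stationarily many $\omega_1$-guessing internally club structures at a single cardinal''. It is exactly here that the internally-\emph{club} (as opposed to internally-unbounded) clause in the definition of $GIC_{\omega_1}$ is used, and that the strength of $PFA^+$ or $MM$ over bare $PFA$ is needed; this step is where I would concentrate, leaning on the cited results of \cite{VW_ISP} to supply the coherence.
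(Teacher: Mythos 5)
Your overall plan---get $RP([\omega_2]^\omega)$ from $PFA^+$ or $MM$, build a tower concentrating on $GIC_{\omega_1}$, then apply Theorem~\ref{thm_MainGIC} (and its ideal-level sharpening Theorem~\ref{thm_RP_Ideal} together with Lemma~\ref{lem_LessDelta} and Corollary~\ref{cor_PresatImpliesWeakChang} for the weak Chang Property)---is the right decomposition, and the first and third parts are essentially correct. But the second part, as you have written it, contains a genuine gap. You set $\mathcal{I}_\lambda := NS \upharpoonright (GIC_{\omega_1} \cap [V_\lambda]^{\omega_1})$ for each $\lambda$ and assert that the coherence required of a tower ``is exactly the projection-compatibility of the traces of $GIC_{\omega_1}$ \dots and their inverses.'' That last clause is precisely the \emph{lifting} property of $GIC_{\omega_1}$ with respect to the nonstationary ideal, and the paper explicitly flags this as unknown and ``seems doubtful'' even under $PFA$: while $IC_{\omega_1}$, $IS_{\omega_1}$, $IU_{\omega_1}$ all lift (by the argument of Proposition 2.4 of Foreman--Magidor), the guessing class $G_{\omega_1}$ need not, and hence neither need $GIC_{\omega_1}$. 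Without lifting, some $A$ stationary in $GIC_{\omega_1}\cap[V_\lambda]^{\omega_1}$ could have nonstationary lift at a higher level, so $\mathcal{I}_\lambda$ would be a \emph{proper} subideal of the canonical projection of $\mathcal{I}_{\lambda'}$, and $\vec{\mathcal{I}}$ would fail to be a tower. The citation of \cite{VW_ISP} does not supply this: it gives stationarity of $GIC_{\omega_1}$ at every level, not the level-by-level lifting.

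The paper avoids the issue by building the ideals \emph{from the top down} rather than level-by-level: take a single ideal $NS\upharpoonright (GIC_{\omega_1}\cap\wp_{\omega_2}(V_\delta))$ and let $I_\lambda$ be its canonical projection to $Z_\lambda := \{M\cap H_\lambda : M\in GIC_{\omega_1}\cap\wp_{\omega_2}(V_\delta)\}$. A sequence of projections of one fixed ideal is automatically a tower, and Lemma~\ref{lem_ClassesProject} (the classes \emph{project}, which is the unproblematic direction) guarantees $Z_\lambda\subseteq GIC_{\omega_1}\cap\wp_{\omega_2}(H_\lambda)$, so each $I_\lambda$ still concentrates on $GIC_{\omega_1}$. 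Your closing speculation that the extra strength of $PFA^+$ or $MM$ over bare $PFA$ is what is needed for the tower coherence is therefore off the mark: the tower itself exists already under $PFA$ via the projection construction; $PFA^+$ or $MM$ is used only to get $RP([\omega_2]^\omega)$ (via $FA^+(\sigma\text{-closed})$ or $SRP$, respectively---note that plain $PFA$ is not known to imply $RP$, contrary to what your first paragraph asserts in passing). Replace your level-by-level ideals with the top-down projections and the rest of your argument goes through.
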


The hypotheses of Theorem \ref{thm_MainGIC} can be strengthened to obtain a stronger conclusion:

\begin{theorem}\label{thm_MainGIS}
Assume $SRP([\omega_2]^\omega)$ and  the Tree Property at $\omega_2$.  Whenever $\vec{I}$ is a tower which concentrates on the class $GIS_{\omega_1}$ of $\omega_1$-guessing, internally stationary sets, then $\vec{I}$ is not presaturated.
\end{theorem}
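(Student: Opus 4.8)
The plan is to reduce Theorem~\ref{thm_MainGIS} to the same mechanism used for Theorem~\ref{thm_MainGIC}, namely to show that $SRP([\omega_2]^\omega)$ together with the Tree Property at $\omega_2$ rules out any single ideal $J$ that concentrates on $GIS_{\omega_1}$ and bounds its completeness; since presaturation of a tower $\vec I$ on $GIS_{\omega_1}$ yields (by the analogue of Corollary~\ref{cor_PresatImpliesWeakChang} and the remarks after Theorem~\ref{thm_MainGIC}) such an ideal on some $[\theta]^{\omega_1}$, this suffices. So I would argue by contradiction: suppose $J$ is an ideal on $\wp(H_\theta)$ (for suitable large $\theta$) that concentrates on $GIS_{\omega_1}$ and bounds its completeness, let $G$ be generic for $\wp(H_\theta)/J$, and let $j\colon V\to \mathrm{ult}(V,G)=M$ be the generic ultrapower, which has critical point $\omega_2$ and (because $J$ bounds its completeness) is closed enough that $M$ computes $H_\theta^V$ correctly and $\omega_2^V$ is a regular cardinal of $M$ below $j(\omega_2)$.

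The heart of the matter is to exploit the combination ``$\omega_1$-guessing'' plus ``internally stationary''. On a set $N\in GIS_{\omega_1}$ with transitive collapse $\bar N$ and inverse collapse $\sigma$, the $\omega_1$-approximation property of $\bar N$ says that any subset of an ordinal in $\bar N$ that is $\omega_1$-approximated inside $\bar N$ already belongs to $\bar N$; meanwhile internal stationarity gives us that $[\sup(N\cap\theta)]^{\omega_1}\cap N$ is stationary in $[\sup(N\cap\theta)]^{\omega_1}$, which is exactly the hypothesis needed to run a tree-argument. Concretely, I would use the Tree Property at $\omega_2$ to find, in a suitable forcing extension or directly in $M$, a cofinal branch through a tree $T$ of height $\omega_2^V$ all of whose levels have size $<\omega_2^V$ — the tree being built from the collapsing maps $\sigma_N$ for $N$ in the generic object — and then argue that $\omega_1$-guessing forces this branch to be an element of $M$, hence to be the critical-point image of some object in $V$. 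The contradiction will come from the fact that such a branch cannot exist in $V$: it would be a ``guessing'' object witnessing that $\omega_2^V$ is not really the critical point, or equivalently it would collapse $\omega_2$, against $\omega_2^V$ remaining regular in $M$. This is the step analogous to how, in the proof of Theorem~\ref{thm_MainGIC}, internal clubness let one directly produce a contradiction via the approximation property; for stationarity we must instead feed the weaker combinatorial data into the Tree Property at $\omega_2$ to manufacture the branch.

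The main obstacle I expect is precisely this substitution of ``internally stationary'' for ``internally club'': internal clubness of $N$ gives an internally approachable-like filtration of $N$, so the approximation property of $\bar N$ can be applied to a single cofinal $\omega_1$-sequence of subsets and immediately yields membership; internal stationarity only gives a stationary (not club) set of small substructures, so one cannot pick out a canonical cofinal chain, and the argument must instead be routed through a tree whose branches correspond to ``threads'' of collapsing maps. Making this precise requires (i) checking that the relevant tree really has height $\omega_2^V$ and narrow levels in the sense required by $TP(\omega_2)$ — here $SRP([\omega_2]^\omega)$, which is strong enough to imply stationary-set reflection and hence to control the club filter on $[\theta]^{\omega_1}$, is used to guarantee the tree is ``thin'' — and (ii) verifying that $\omega_1$-guessing is exactly the hypothesis that turns a cofinal branch of this tree into an element of the generic ultrapower. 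Once those two points are in place, the closure of $M$ coming from ``$J$ bounds its completeness'' delivers the contradiction in the same way as before.

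Finally, I would note that the conclusion about full towers $\vec I$ follows from the single-ideal statement verbatim as in the $GIC_{\omega_1}$ case: a presaturated tower of ideals concentrating on $GIS_{\omega_1}$ induces, on a tail of its coordinates, ideals that concentrate on $GIS_{\omega_1}$ and bound their completeness, so no such presaturated tower can exist under $SRP([\omega_2]^\omega)+TP(\omega_2)$.
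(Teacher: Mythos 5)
Your overall scaffolding is right: Theorem~\ref{thm_MainGIS} is indeed reduced, via Corollary~\ref{cor_PresatImpliesWeakChang} and Lemma~\ref{lem_LessDelta}, to showing that under $SRP([\omega_2]^\omega)+TP(\omega_2)$ there is no single ideal concentrating on $GIS_{\omega_1}$ that bounds its completeness; and you correctly intuit that one should build, from the structures in a positive set, a tree of height $\omega_2$ to feed into $TP(\omega_2)$, with internal stationarity serving as a weaker substitute for internal clubness. However, the argument you sketch for deriving the final contradiction is not the one that works, and as stated it has a real gap.

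Your proposed endgame is to pass to the generic ultrapower $M$, produce a branch there, and argue it ``collapses $\omega_2$'' or contradicts regularity of $\omega_2^V$ in $M$. That cannot be right: $TP(\omega_2)$ is precisely the assertion that thin trees of height $\omega_2$ do have branches in $V$, so the existence of a branch is not by itself pathological, and in particular does not collapse $\omega_2$. Moreover, $\omega_1$-guessing is not being used here to place the branch inside the ultrapower. Instead the whole argument runs inside $V$. The tree $T(\alpha)$ consists of the transitive collapses $H_M$ of the $M\in S$ with $M\cap\omega_2=\alpha$, ordered by the hereditary-initial-segment relation. Its levels have size $<\omega_2$ not by any direct combinatorial computation but by the Isomorphism Theorem~\ref{thm_IsomIS} for $GIS_{\omega_1}$ together with the saturation of $NS_{\omega_1}$ (which follows from SRP): two nodes at the same level would come with stationary ``witnessing'' sets $T_M,T_{M'}\subset\omega_1$ which must have nonstationary intersection, so $\omega_2$-many nodes on one level would yield an $\omega_2$-sized antichain modulo $NS_{\omega_1}$. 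That is where the ``guessing'' enters. The height is $\omega_2$ for measure-one many $\alpha$ because the ideal bounds its completeness, exactly as in Claim~\ref{clm_AtLeastOmega2}. Then $TP(\omega_2)$ gives a branch whose union is a transitive $ZFC$ model $W$ of height $\omega_2$ \emph{in $V$}, and $W$ misses a real. The contradiction is then obtained via the Gitik--Velickovic theorem (Corollary~\ref{cor_ToVelick}): $[\omega_2]^\omega\setminus W$ is projective stationary, so $SRP(\omega_2)$ produces a continuous chain $\langle N_\xi\rangle$ of countable elementary submodels of $H_{\omega_2}$ whose traces on $\omega_2$ all lie outside $W$; the union $Z$ of these traces is an ordinal and hence an element of some $H$ on the branch, but internal stationarity of $H$ forces $H\cap[H]^\omega$ to project to a stationary subset of $[Z]^\omega$, which must then meet the club $\{N_\xi\cap\omega_2\}$ sitting inside $[\omega_2]^\omega\setminus W$ --- impossible since $H\subseteq W$. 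So the contradiction comes from a stationarity/club clash inside $V$, not from an object landing in the ultrapower. You would need to replace your final paragraph entirely: in particular you are missing both the use of the Isomorphism Theorem plus saturation of $NS_{\omega_1}$ for thinness, and the Velickovic/Gitik input that makes $SRP$ bite.
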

Again, similarly to Theorem \ref{thm_MainGIC} we actually show more:  that $SRP([\omega_2]^\omega)$ together with the Tree Property at $\omega_2$ implies there is no single ideal which concentrates on $GIS_{\omega_1}$ and bounds its completeness.

If we require the tower to be \emph{definable}, then a theorem of Burke~\cite{MR1472122} together with the Isomorphism Theorem from \cite{Viale_GuessingModel} yields:
\begin{theorem}\label{thm_NoDefinablePrecipGIC}
(ZFC):  If $2^\omega \le \omega_2$ then there is no precipitous tower of inaccessible height $\delta$ which concentrates on $GIC_{\omega_1}$ and is definable over $(V_\delta, \in)$
\end{theorem}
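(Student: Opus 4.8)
The plan is to derive a contradiction by playing Burke's theorem from \cite{MR1472122} against the Isomorphism Theorem of \cite{Viale_GuessingModel}. Assume $2^\omega\le\omega_2$ and, toward a contradiction, that $\vec{I}$ is a precipitous tower of inaccessible height $\delta$ which is definable over $(V_\delta,\in)$ and concentrates on $GIC_{\omega_1}$. The conceptual point is the following: Burke's theorem should say that a definable tower of merely inaccessible height cannot be precipitous once its generic ultrapower is forced to have a ``generic seed'' whose transitive collapse is internally closed in a suitable, IA-like sense (this is the feature that, for the Foreman--Magidor towers, is only available at a genuinely supercompact height, and is exactly what definability over $V_\delta$ is meant to rule out at an inaccessible). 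On the other side, the Isomorphism Theorem says that --- precisely because $2^\omega\le\omega_2$ --- the $\omega_1$-approximation property of the transitive collapse of an $\omega_1$-guessing, internally club structure pins that collapse down up to isomorphism, and the resulting canonical form is of exactly the IA-like shape Burke's theorem forbids. So even though a $GIC_{\omega_1}$-structure is itself ``definitely not internally approachable,'' its collapse is, and the generic ultrapower of a tower concentrating on $GIC_{\omega_1}$ only ever sees collapses.

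Carrying this out, I would first force with $\mathbb{P}_{\vec{I}}$ and pass to the generic ultrapower $j\colon V\to N:=ult(V,G)$; precipitousness makes $N$ wellfounded, $\mathrm{crit}(j)=\omega_2$ and $j(\omega_2)=\delta$. Since $\vec{I}$ concentrates on $GIC_{\omega_1}$, the set of $GIC_{\omega_1}$-structures lies in the dual filter of $\vec{I}$, hence in $G$, and so by genericity $N$ believes the generic seed $[\mathrm{id}]_G$ is $\omega_1$-guessing and internally club. Applying the Isomorphism Theorem --- either inside $N$, after checking $(2^\omega)^N\le\omega_2^N$, or by reflecting the relevant instance down into $V$ --- gives that the transitive collapse of that seed has the internal closure property appearing in Burke's theorem. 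But $j(\vec{I})$ is, in $N$, a precipitous tower of inaccessible height $j(\delta)$ definable over $V_{j(\delta)}^N$ by the same formula, so Burke's theorem applied inside $N$ asserts that no generic seed of $j(\vec{I})$ can have a collapse of that form; the seed $[\mathrm{id}]_G$ then witnesses both sides of the contradiction. (If Burke's theorem is instead packaged as a statement purely about ground-model towers, one avoids the ultrapower altogether: the Isomorphism Theorem directly upgrades ``concentrates on $GIC_{\omega_1}$'' to ``concentrates on Burke's forbidden class,'' and Burke's theorem finishes the job on $\vec{I}$ itself.)

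I expect the main obstacle to be matching the two cited results precisely rather than any new combinatorics: extracting from \cite{MR1472122} the exact closure/rigidity property of the generic seed that obstructs precipitousness at a definable, inaccessible height, and verifying that the Isomorphism Theorem of \cite{Viale_GuessingModel} really delivers that property for collapses of $GIC_{\omega_1}$-structures whenever $2^\omega\le\omega_2$. A secondary nuisance is the bookkeeping around the cardinal-arithmetic hypothesis: one must check that $2^\omega\le\omega_2$ descends to $N$ in the form the Isomorphism Theorem needs, or else arrange to apply that theorem in $V$ to a reflected instance. It is worth emphasizing in the write-up that both hypotheses are doing essential work --- ``definable over $(V_\delta,\in)$'' is what fails for towers carried by a measure or extender, and $2^\omega\le\omega_2$ is exactly what makes the $\omega_1$-approximation property determine the isomorphism type, so that the ``Isomorphism'' in the Isomorphism Theorem, and hence this argument, is available at all.
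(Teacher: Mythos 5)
There is a genuine gap, and it is not a mere bookkeeping issue: you have misidentified what Burke's result actually says and therefore the architecture of the contradiction. The result from \cite{MR1472122} that the paper invokes (its Lemma 4.3) does not forbid generic seeds with a certain internal closure; it says that a \emph{precipitous tower cannot be an element of its own generic ultrapower} (when $\delta$ is moved, which is automatic here). The definability hypothesis then buys the following reduction: if $V_\delta \in ult(V,G)$, then since $\vec{I}$ is definable over $(V_\delta,\in)$ we get $\vec{I}\in ult(V,G)$, contradicting Burke. So the entire burden of the proof is to show $V_\delta\in ult(V,G)$, and your write-up never engages with that target. Your two candidate mechanisms --- applying Burke's theorem inside $N$ to $j(\vec{I})$ and to the seed $[\mathrm{id}]_G$, or ``upgrading concentration'' to a forbidden class --- do not yield a contradiction because Burke's obstruction lives on the $V$-side of the embedding, not as a constraint on what $N$ thinks of the seed.

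What you also miss is \emph{how} the Isomorphism Theorem is used, and this is the real combinatorial content. One does not analyze the transitive collapse of the seed at all. Instead: each $H^V_\theta$ (for $\theta$ in the support of the tower) is an element of $ult(V,G)$ by the standard facts about tower ultrapowers, and the Los theorem transfers the fact that the tower concentrates on $GIC_{\omega_1}$ to the statement that $ult(V,G)\models$ ``$H^V_\theta\in GIC_{\omega_1}$''. Now, with $2^\omega=\omega_2$ (the CH case is vacuous since then $G_{\omega_1}$ is empty), the Isomorphism Theorem applied \emph{inside} $ult(V,G)$ gives uniqueness: for each ordinal height, there is at most one transitive $ZF^-$ model in $GIC_{\omega_1}$ of that height whose reals are exactly $\mathbb{R}^{ult(V,G)}\cap H^V_{\omega_2}$. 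So $ult(V,G)$ can correctly identify the sequence $\langle H^V_\theta\mid\theta\in U\rangle$ by a definition ranging over its own sets, take the union, and thereby compute $V_\delta$. That is the step that makes $V_\delta\in ult(V,G)$ and closes the argument; your claim that ``its collapse is [internally approachable]'' and that this collides with a forbidden shape has no counterpart in the actual proof, and in fact the paper explicitly notes that $G_{\omega_1}\cap IA_{\omega_1}=\emptyset$, so no such IA-like canonical form is in play.

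Concretely, to repair the argument you need to (i) state Burke's Lemma 4.3 correctly as $\vec{I}\notin ult(V,G)$; (ii) observe that definability over $(V_\delta,\in)$ reduces the goal to $V_\delta\in ult(V,G)$; and (iii) use the Isomorphism Theorem \emph{as a uniqueness device inside the ultrapower} to let $ult(V,G)$ reconstruct the $H^V_\theta$'s and hence $V_\delta$. Checking that the Isomorphism Theorem applies in $ult(V,G)$ is then a matter of noting that the relevant hypotheses (in particular the determination of the reals) are visible to $ult(V,G)$; the case split on $2^\omega\le\omega_2$ is handled by dismissing CH outright.
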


Finally, we prove that in Theorems \ref{thm_MainGIC} through \ref{thm_MainGIS}, the conclusion cannot be strengthened to say there is no \emph{precipitous} tower which concentrates on the relevant class of sets; even if the hypothesis is strengthened to ``plus'' versions of Martin's Maximum:
\begin{theorem}\label{thm_Consistent_GIC_Precip}
If $\kappa$ is supercompact and $\delta > \kappa$ is inaccessible, then if $\mathbb{P}$ is the standard iteration to produce a model of $MM^{+\omega_1}$, there is a precipitous tower in $V^{\mathbb{P}}$ of height $\delta$ concentrating on $GIC_{\omega_1}$.
\end{theorem}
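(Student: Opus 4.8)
The plan is to work with the standard revised countable support iteration $\mathbb{P}=\mathbb{P}_\kappa$ guided by a supercompactness Laver function $\ell\colon\kappa\to V_\kappa$, so that $W:=V^{\mathbb{P}}\models MM^{+\omega_1}$, $\kappa=\omega_2^W$, and $\delta$ is still inaccessible in $W$, and to fix in $V$ an elementary embedding $j\colon V\to M$ with $\operatorname{crit}(j)=\kappa$, $j(\kappa)>\delta$, and ${}^{\lambda}M\subseteq M$ for some $\lambda>2^{\delta}$ (in particular $j\!\restriction\!V_\delta^V\in M$). Since $\operatorname{crit}(j)=\kappa$, the first $\kappa$ steps of $j(\mathbb{P})$ are exactly $\mathbb{P}$, so over $W=V[G]$ one forms the quotient $\mathbb{Q}:=j(\mathbb{P})/G\in M[G]\subseteq W$; $\mathbb{Q}$ forces (over $W$, equivalently over $M[G]$) a lift $\hat\jmath\colon W\to M[\hat G]$ of $j$ with $\operatorname{crit}(\hat\jmath)=\omega_2^W$ and $\hat\jmath\!\restriction\!V_\delta^W\in M[\hat G]$, and by elementarity $M[\hat G]\models MM^{+\omega_1}$. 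I would then define the tower directly in $W$: for each $X\prec(H_\theta^W,\in)$ with $X\in GIC_{\omega_1}^W\cap V_\delta^W$, set
\[
I_X:=\{\,S\subseteq\wp_{\omega_2}(X)\ :\ 1_{\mathbb{Q}}\Vdash\hat\jmath"X\notin\hat\jmath(S)\,\},
\]
ordering $\mathbb{P}_{\vec I}$ by $S\le T$ iff $X_T\subseteq X_S$ and $\{a\cap X_T:a\in S\}\subseteq T$. Because $\operatorname{crit}(\hat\jmath)=\omega_2$ we have $\hat\jmath(\omega_1)=\omega_1$, so $\hat\jmath$ commutes with intersections of length $\le\omega_1$ and the forcing relation distributes over conjunctions of that length; hence each $I_X$ is a proper, $\omega_2$-complete ideal, and the identity $\hat\jmath"X=\hat\jmath"Y\cap\hat\jmath(X)$ (for $X\subseteq Y$) gives the projection coherence, so $\vec I=\langle I_X\rangle$ is a tower of critical point $\omega_2^W$. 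Its height is $\delta$ because, by \cite{VW_ISP} (using $PFA^+$ or $MM$, hence $MM^{+\omega_1}$), the models in $GIC_{\omega_1}^W$ are cofinal in $V_\delta^W$.

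Precipitousness then follows by the standard mechanism that a tower \emph{induced by a generic embedding} is precipitous. Associate to each condition $S$ the Boolean value $b_S:=[\![\hat\jmath"X_S\in\hat\jmath(S)]\!]_{\mathcal B(\mathbb{Q})}$; using $\hat\jmath"(X_S\cup X_T)\cap\hat\jmath(X_S)=\hat\jmath"X_S$ one checks $b_{S\wedge_{X_R}T}=b_S\wedge b_T$ for any $X_R\supseteq X_S\cup X_T$ in $GIC_{\omega_1}^W$, so that $S\perp T$ in $\mathbb{P}_{\vec I}$ iff $b_S\wedge b_T=0$. Given a $W$-generic $H$ on $\mathbb{P}_{\vec I}$, the values $\{b_S:S\in H\}$ generate a proper filter on $\mathcal B(\mathbb{Q})$; forcing over $W[H]$ with $\mathbb{Q}$ below it yields a $\mathbb{Q}$-generic $\hat G$ over $W[H]$ — hence over $M[G]$ — whose lift $\hat\jmath\colon W\to M[\hat G]$ satisfies $H\subseteq\{S:\hat\jmath"X_S\in\hat\jmath(S)\}=:H'$; since $H'$ is itself a filter and $H$ is generic, $H=H'$, and therefore $\operatorname{ult}(W,H)$ is isomorphic to the submodel $\{\hat\jmath(f)(\hat\jmath"X_f):f\in W\}$ of the wellfounded model $M[\hat G]$, hence wellfounded. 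As $H$ was arbitrary, $\mathbb{P}_{\vec I}$ is precipitous.

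It remains to verify that $\vec I$ \emph{concentrates on $GIC_{\omega_1}$}, and this is the step that genuinely uses $MM^{+\omega_1}$ and the shape of the standard iteration. Unwinding the definition of $I_X$, concentration reduces to
\[
1_{\mathbb{Q}}\Vdash\ \hat\jmath"X\ \text{is internally club and }\omega_1\text{-guessing in }M[\hat G]\qquad(X\in GIC_{\omega_1}^W\cap V_\delta^W).
\]
Internal clubness transfers immediately: if $\langle X_i:i<\omega_1\rangle$ witnesses that $X$ is internally club in $W$, then $\langle\hat\jmath(X_i):i<\omega_1\rangle$ (which equals $\langle\hat\jmath"X_i:i<\omega_1\rangle$, as the $X_i$ are countable and $\operatorname{crit}(\hat\jmath)>\omega_1$) is a continuous $\subseteq$-increasing chain of countable elements of $\hat\jmath"X$ with union $\hat\jmath"X$, and it lies in $M[\hat G]$. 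The substantial point is that the $\omega_1$-approximation property of the transitive collapse $\bar X$ of $X$, which holds in $W$ since $X\in GIC_{\omega_1}^W$, persists in the larger model $M[\hat G]$: one must rule out a ``fresh'' $d\subseteq\bar X$ in $M[\hat G]$ all of whose countable pieces lie in $\bar X$ but which itself does not.

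That last point is the main obstacle, and I expect to attack it from two sides. On one side, $M[\hat G]\models MM^{+\omega_1}$, so in $M[\hat G]$ the Tree Property holds at $\omega_2^{M[\hat G]}=j(\kappa)$ and the guessing-model technology of \cite{VW_ISP} is available; a would-be fresh $d$ yields a tree of height $\omega_1$ (of its countable approximations) whose branches must already be realized inside $\bar X$, forcing $d\in\bar X$. On the other side, one exploits the concrete form of $\mathbb{Q}=j(\mathbb{P})/G$ — a revised countable support iteration of stationary-set-preserving posets beginning past $\kappa$ — to control which subsets of $\bar X$ it can add. Everything else (the ideal-theoretic verifications, the induced-generic computation, and confirming that the chosen family of domains is cofinal enough to make $\vec I$ a tower of full height $\delta$ with critical point $\omega_2$) is routine bookkeeping.
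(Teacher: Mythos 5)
Your overall framework — inducing a tower in $V^{\mathbb{P}}$ from the generic lift of a supercompactness embedding $j$ with $\mathbb{P}$ absorbed below its critical point, then using the duality between $\mathbb{Q}=j(\mathbb{P})/G$ and the derived tower to get precipitousness — is exactly the paper's approach (Theorem~\ref{thm_DualityTowers} and Corollary~\ref{cor_CreatingTowers}). But you have identified, and then left open, the one step that is actually the content of the theorem: showing that the tower concentrates on $GIC_{\omega_1}$, i.e.\ that $\hat\jmath"X$ remains $\omega_1$-guessing in $M[\hat G]$. Your two suggested attacks do not work. The Tree Property at $\omega_2^{M[\hat G]}$ concerns trees of height $\omega_2^{M[\hat G]} = j(\kappa)$, whereas the tree of countable approximations to a would-be fresh $d\subseteq\bar X$ has height $\omega_1$, so $TP(\omega_2)$ says nothing about it; indeed $G_{\omega_1}\cap IA_{\omega_1}=\emptyset$ shows that the approximation property is genuinely fragile and cannot be argued abstractly from compactness at $\omega_2$. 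And the quotient $\mathbb{Q}=j(\mathbb{P})/G$ is only stationary-set preserving, not proper, so there is no general obstruction to it adding fresh subsets of $\omega_1$-sized sets; the iteration structure by itself doesn't save you.

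The paper's resolution, which is missing from your proposal, is an \emph{anticipation} via the Laver function: the supercompactness measure $U$ is chosen so that $j_U(Lav)(\kappa)$ is the name $\dot{\mathbb{R}}_\delta$ for the Viale--Weiss poset $\mathbb{R}_\delta$ of Theorem~\ref{thm_GIC_persistent}, and Claim~\ref{clm_j_infty_Lav_R} transfers this to the tower embedding $j_{\vec U}$. Consequently the $\kappa$-th component $H^*$ of the $\mathbb{Q}$-generic $H$ is $(N_{\vec U}[G],\mathbb{R}_\delta^{N_{\vec U}[G]})$-generic, and the crucial \emph{persistence} clause of Theorem~\ref{thm_GIC_persistent} (item~\ref{item_Persistence}) then guarantees $N_{\vec U}[G][H]\models H_\delta[G]\in GIC_{\omega_1}$ no matter how the rest of the quotient iteration behaves, because $\omega_1$ is preserved. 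Projection (Lemma~\ref{lem_ClassesProject}) and the fact that $j^{G*H}_{\vec U}\!\restriction\!H_\lambda[G]\in N_{\vec U}[G][H]$ then give concentration. Note also that you are trying to put ideals on \emph{every} $X\in GIC_{\omega_1}^W\cap V_\delta^W$, but there is no reason that an arbitrary $\omega_1$-guessing model of $W$ should persist into $M[\hat G]$; the paper avoids this by not trying to concentrate on pre-existing guessing models at all, instead letting the Viale--Weiss poset manufacture the one (persistent) guessing model $H_\delta[G]$ whose projections carry the tower. Without this anticipation-plus-persistence device, your construction does not establish concentration on $GIC_{\omega_1}$, and the proof is incomplete.
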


The paper is organized as follows.  Section \ref{sec_Prelims} provides relevant background on guessing models (\ref{subsec_Prelim_GICetc}), forcing axioms and reflection principles (\ref{subsec_FA_ReflectionPrinciples}), the key Isomorphism Theorems for $\omega_1$-guessing structures from \cite{Viale_GuessingModel} (\ref{subsec_IsomThms}), towers of ultrafilters and ideals (\ref{sec_TowersMeasuresIdeals}), and induced towers (\ref{subsec_InducedTowers}).  Those last two subsections (\ref{sec_TowersMeasuresIdeals} and \ref{subsec_InducedTowers}) are used primarily for the consistency proof in Section \ref{sec_Consist_MM_GIC_Precip}, though Definition \ref{def_TowerIdeals} and Fact \ref{fact_TowerFacts} are used throughout the paper.  Section \ref{sec_TowersAndWeakChang} provides a brief review of the weak Chang property and relevant theorems from \cite{Cox_MALP} that will be used in the later proofs in Sections \ref{sec_RP_towers} and \ref{sec_SRP_towers} (but not in Section \ref{sec_Consist_MM_GIC_Precip}).  Section \ref{sec_RP_towers} proves Theorems \ref{thm_MainGIC}, \ref{thm_NoDefinablePrecipGIC}, and  \ref{thm_PFAplus_impliesNonPresatGIC}.  Section \ref{sec_SRP_towers} proves Theorem \ref{thm_MainGIS}.  The results in Sections \ref{sec_RP_towers} and \ref{sec_SRP_towers} are due to both authors.  Section \ref{sec_Consist_MM_GIC_Precip} proves Theorem \ref{thm_Consistent_GIC_Precip}, and is due to the first author.  Section \ref{sec_Questions} ends with some open problems.

\section{Preliminaries}\label{sec_Prelims}

\subsection{The classes \texorpdfstring{$GIC_{\omega_1}$}{GIComega1}, \texorpdfstring{$GIS_{\omega_1}$}{GISomega1}, and \texorpdfstring{$GIU_{\omega_1}$}{GIUomega1}}\label{subsec_Prelim_GICetc}
Weiss~\cite{Weiss_CombEssence} introduced the notion ISP, which is a significant strengthening of the Tree Property.  In this paper we use the alternative notion of a \emph{$\delta$-guessing model} from  \cite{Viale_GuessingModel}.  $ZF^-$ denotes $ZF$ without the Power Set Axiom.

\begin{definition}
Let $H$ be a transitive $ZF^-$ model and $\delta \in REG^H$.  We say that $H$ has the $\delta$-approximation property iff $(H,V)$ has the $\delta$-approximation property as in Hamkins~\cite{MR2540935}.  In other words, for every $\eta \in H$:  whenever $A \subset \eta$ is such that $z \cap A \in H$ for every $z \in H$ with $|z|^H < \delta$, then $A \in H$. 
\end{definition}
If $\omega_1 \subset M \prec H_\theta$ for some regular uncountable $\theta$, we say $M$ is \emph{$\omega_1$-guessing} iff its transitive collapse $H_M$ has the $\omega_1$-approximation property.\footnote{This definition is slightly different but equivalent to the definition in \cite{Viale_GuessingModel}.  Further, note that since we're assuming $M \prec H_\theta$ and $\omega_1 \subset M$, then $|z|^{H_M} = \omega$ iff $|z| = \omega$ (for any $z \in H_M$).}  We let $G_{\omega_1}$ denote the class of $M$ such that $|M| = \omega_1 \subset M$ and $M$ is  $\omega_1$-guessing.  $\sigma_M: H_M \to M$ will always denote the inverse of the Mostowski collapse of $M$.

We use several other common classes of structures (see Foreman and Todorcevic~\cite{MR2115072}).  A set $M$ is \emph{$\omega_1$-internally club} iff $M \cap [M]^\omega$ contains a club in $[M]^\omega$; \emph{$\omega_1$-internally stationary} iff $M \cap [M]^\omega$ is stationary in $[M]^\omega$; and \emph{$\omega_1$-internally unbounded} iff $M \cap [M]^\omega$ is $\subseteq$-cofinal in $[M]^\omega$.  Let $\Lambda := \{  M  \ | \ (M, \in) \text{ satisfies } ZF^- \text{, } \omega_1 \subseteq M \text{, and } |M| = \omega_1 \}$. We let $IC_{\omega_1}$, $IS_{\omega_1}$, and $IU_{\omega_1}$ refer respectively to the class of $M \in \Lambda$ which are $\omega_1$-internally club, $\omega_1$-internally stationary, and $\omega_1$-internally unbounded.  The classes $IC_{\omega_1}$, $IS_{\omega_1}$, and $IU_{\omega_1}$ can be equivalently characterized in ways analogous to internal approachability:
\begin{itemize}
 \item $M \in IA_{\omega_1}$ iff there is a $\subseteq$-continuous $\in$-chain $\langle N_\xi \ | \ \xi < \omega_1 \rangle$ such that $M = \bigcup_{\xi < \omega_1} N_\xi$ and $\langle N_\xi \ | \ \xi < \zeta \rangle \in M$ for every $\zeta < \omega_1$;
 \item $M \in IC_{\omega_1}$ iff there is a $\subseteq$-continuous $\in$-chain $\langle N_\xi \ | \ \xi < \omega_1 \rangle$ such that $M = \bigcup_{\xi < \omega_1} N_\xi$ and $N_\xi \in M$ for every $\xi < \omega_1$;
 \item $M \in IS_{\omega_1}$ iff there is a $\subseteq$-continuous $\in$-chain $\langle N_\xi \ | \ \xi < \omega_1 \rangle$ such that $M = \bigcup_{\xi < \omega_1} N_\xi$ and $N_\xi \in M$ for stationarily many $\xi < \omega_1$.  It is straightforward to see that $M \in IS_{\omega_1}$ iff there is some stationary $T_M \subset \omega_1$ such that for \emph{every} $\subseteq$-continuous $\in$-chain $\vec{N}$ with union $M$, $\{  \xi < \omega_1 \ | \ N_\xi \in M \} =_{NS} T_M$;\footnote{For $A,B \subset \omega_1$, $A =_{NS} B$ means that $\{  \xi < \omega_1 \ | \ \xi \in A \Delta B \}$ is nonstationary.}
 \item $M \in IU_{\omega_1}$ iff there is a $\in$-chain $\langle N_\xi \ | \ \xi < \omega_1 \rangle$ such that $M = \bigcup_{\xi < \omega_1} N_\xi$. 
\end{itemize}  
Set $GIC_{\omega_1}:=G_{\omega_1} \cap IC_{\omega_1}$, $GIS_{\omega_1}:=G_{\omega_1} \cap IS_{\omega_1}$, and $GIU_{\omega_1}:=G_{\omega_1} \cap IU_{\omega_1}$; note that $G_{\omega_1} \cap IA_{\omega_1}$ is always empty.\footnote{This is because if $M \in IA_{\omega_1}$ as witnessed by some sequence $\vec{N} = \langle N_\xi \ | \ \xi < \omega_1 \rangle$, then for every countable $z \in M$, $z \cap \vec{N} \in M$; if $M$ were in $G_{\omega_1}$ this would imply that $\vec{N} \in M$ and then that $M \in M$, which is of course impossible.}  Note also that all of the classes mentioned are invariant under isomorphism (i.e. $M$ is in the class iff its transitive collapse $H_M$ is in the class).  Viale and Weiss proved:
\begin{theorem}
(Viale-Weiss~\cite{VW_ISP})  $PFA$ implies that $GIC_{\omega_1} \cap \wp_{\omega_2}(H_\theta)$ is stationary for all regular $\theta \ge \omega_2$.  
\end{theorem}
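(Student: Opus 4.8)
The plan is to argue by forcing, following Weiss~\cite{Weiss_CombEssence} and Viale--Weiss~\cite{VW_ISP}. Fix a regular $\theta \ge \omega_2$; since $\theta$ is arbitrary it suffices to establish stationarity for this $\theta$. By the standard characterization of stationarity in $\wp_{\omega_2}(H_\theta)$, it is enough to fix an arbitrary algebra $\mathfrak A = (H_\theta, \in, W, (f_n)_{n<\omega})$ expanding $(H_\theta,\in)$ by a wellorder $W$ of $H_\theta$ and countably many finitary functions, and to produce a single $M \prec \mathfrak A$ with $|M| = \omega_1$, $\omega_1 \subseteq M$, $M \in IC_{\omega_1}$, and $M \in G_{\omega_1}$; as the $f_n$ were arbitrary, such an $M$ meets any prescribed club of $\wp_{\omega_2}(H_\theta)$.

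Such an $M$ will be added generically. As a first approximation, consider the poset $\mathbb P_0$ of countable, $\subseteq$-continuous, $\in$-increasing chains $p = \langle N^p_\xi \ | \ \xi \le \gamma_p\rangle$ of countable elementary submodels of $\mathfrak A$ satisfying $\langle N^p_\eta \ | \ \eta \le \xi\rangle \in N^p_{\xi+1}$ for $\xi < \gamma_p$, ordered by end-extension. A routine limit argument shows $\mathbb P_0$ is $\sigma$-closed, and a filter meeting the $\omega_1$ dense sets ``$\gamma_p \ge \xi$ and $\xi \in N^p_{\gamma_p}$'' ($\xi < \omega_1$) gives a chain whose union $M$ is $\omega_1$-internally club (witnessed by the chain), of size $\omega_1$, containing $\omega_1$, and elementary in $\mathfrak A$ --- but there is no reason for $M$ to be $\omega_1$-guessing, and indeed a model of this kind can be built in ZFC alone, whereas $GIC_{\omega_1}$ being stationary is \emph{not} a theorem of ZFC. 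The actual poset $\mathbb P$ therefore augments each condition by a finite ``guessing commitment'': a finite partial specification of how the generic $M$ is to guess the traces, on its initial segments $N^p_\xi$, of subsets of ordinals, subject to a coherence-plus-thinness constraint ensuring such specifications can always be extended; the ordering is end-extension of the chains together with inclusion and respect of the commitments. This is exactly the ``thin list / ineffable branch'' bookkeeping of \cite{Weiss_CombEssence}, and the dictionary between ``$H_M$ has the $\omega_1$-approximation property'' and the external thin-list combinatorics is furnished by the Isomorphism Theorems of \cite{Viale_GuessingModel} cited above.

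One then verifies two things. First, $\mathbb P$ is proper: given countable $X \prec (H_{(2^\theta)^+}, \in, W, \mathfrak A, \mathbb P, \dots)$ and $p \in \mathbb P \cap X$, the master condition is obtained by appending $X \cap H_\theta$ to $p$ as the next model of the chain; one checks this is a genuine condition (the commitments of $p$ lie in $X$, hence reflect into it, hence are met by $X \cap H_\theta$ --- this uses that the pertinent thin list, restricted to $X \cap H_\theta$, takes all its values inside $X$) and is $(X,\mathbb P)$-generic. Note that $\mathbb P$ is no longer $\sigma$-closed --- as it cannot be, given the non-triviality of the conclusion --- but properness is all PFA needs. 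Second, the dense sets required for the conclusion number only $\omega_1$: the sets ``$\gamma_p \ge \xi$ and $\xi \in N^p_{\gamma_p}$'' as above, and, for each index of the thinning enumeration, a dense set forcing the corresponding commitment --- thinness is precisely what bounds that enumeration by $\omega_1$. Applying PFA to this family of $\omega_1$ dense sets yields a filter $G$; set $M := \bigcup\{N^p_\xi \ | \ p \in G,\ \xi \le \gamma_p\}$. Then $M \prec \mathfrak A$, $|M| = \omega_1 \subseteq M$, the chains from $G$ witness $M \in IC_{\omega_1}$, and the commitments realized along $G$ witness that $H_M$ has the $\omega_1$-approximation property, i.e.\ $M \in G_{\omega_1}$; hence $M \in GIC_{\omega_1}$ and lies in the chosen club, as required.

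The main obstacle is the design of the guessing-commitment component of $\mathbb P$: it must be set up so that $\mathbb P$ stays proper --- in particular that the master condition always respects every pending commitment --- while being strong enough that meeting only $\omega_1$-many dense sets already forces the generic union to be \emph{fully} $\omega_1$-approximated. This is the combinatorial heart of Weiss's argument and the one place a hypothesis beyond ZFC (namely PFA, entering only through ``proper $+$ $\omega_1$ dense sets'') is used; the reduction to a single algebra, the limit bookkeeping for $\sigma$-closure/properness, and reading the four properties of $M$ off of $G$ are routine.
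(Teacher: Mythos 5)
The paper does not prove this result; it is quoted from Viale--Weiss~\cite{VW_ISP}, and the mechanism Viale and Weiss actually use is summarized in the very next theorem of the paper (Theorem~\ref{thm_GIC_persistent}). Their route is a \emph{generic absorption} argument: in ZFC alone they construct a proper poset $\mathbb R_\delta$ which forces the entire set $H_\delta^V$ to become a member of $GIC_{\omega_1}$ in the extension, and which moreover has the key ``reflection'' feature (item (3) of Theorem~\ref{thm_GIC_persistent}) that whenever $M$ is any $\omega_1$-sized $ZF^-$ model admitting an $(M,\mathbb R_\delta)$-generic filter, then $M\cap H_\delta\in GIC_{\omega_1}$, and this membership is persistent to outer models with the same $\omega_1$. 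PFA then enters solely through the Baumgartner-style principle producing such an $M$ and filter inside any prescribed club. That is quite different from your plan, which is to force the $\omega_1$-sized model $M$ \emph{directly}, by conditions that are countable chain-approximations decorated with finite ``guessing commitments.''

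Your decomposition into two layers (the $\sigma$-closed chain part for $IC_{\omega_1}$, plus commitments for $G_{\omega_1}$) is a sensible blueprint and is closer in spirit to Weiss's original proof that PFA implies $ISP(\omega_2)$. But as written there is a genuine gap: you never say what a guessing commitment \emph{is}, and the one-sentence gesture at ``thin list / ineffable branch bookkeeping'' does not resolve the central difficulty. The $\omega_1$-approximation property of $H_M$ is a statement about \emph{all} $A\subseteq\eta\cap M$ whose countable traces land in $M$; a priori there may be as many as $2^{\omega_1}$ such $A$, so it is not clear how $\omega_1$-many dense sets can force all of them to be caught. In Weiss's setting this is handled by fixing a \emph{single} thin (or slender) list in advance and forcing one ineffable branch for it; passing from that statement to the stationarity of $G_{\omega_1}$ is a separate nontrivial equivalence proved in \cite{VW_ISP}, not an afterthought. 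Your proof also misattributes this equivalence: the ``dictionary'' between the approximation property of $H_M$ and slender-list combinatorics comes from \cite{VW_ISP}, not from the Isomorphism Theorem of \cite{Viale_GuessingModel} (which is about uniqueness of guessing models with a common $\omega_2$-trace, and is used elsewhere in this paper but not here). Finally, even if your direct forcing could be carried out, it would yield a strictly weaker conclusion than the citation warrants: the Viale--Weiss approach delivers \emph{persistence} of the $GIC_{\omega_1}$ property into outer models, and that extra feature is precisely what Section~\ref{sec_Consist_MM_GIC_Precip} of the paper relies on. So your plan, even if completable, would not be a drop-in replacement for the cited theorem as this paper uses it.
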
 
Their proof actually produced models which were not only $GIC_{\omega_1}$, but \emph{persistently so}; that is, these models remain in $GIC_{\omega_1}$ in any outer model which has the same $\omega_1$.  This used the following generalization of a theorem of Baumgartner.  For a poset $\mathbb{R}$ and a possibly non-transitive set $M$, let us say that a filter $g \subset \mathbb{R}$ is \emph{$(M, \mathbb{R})$-generic} iff $g \cap D \cap M \ne \emptyset$ for every $D \in M$ which is dense in $\mathbb{R}$.

\begin{theorem}\label{thm_GIC_persistent}
(Viale-Weiss~\cite{VW_ISP}) For each regular $\delta \ge \omega_2$ there is a proper poset $\mathbb{R}_\delta$ such that:
\begin{enumerate}
 \item $\mathbb{R}_\delta \in H_{\delta^+}$;
 \item $\Vdash_{\mathbb{R}} \check{H} \in GIC_{\omega_1}$ where $H:= H_\delta^V$;
 \item Whenever $M$ is a (possibly non-transitive) $ZF^-$ model such that $|M| = \omega_1 \subset M$ and there exists some $g$ which is $(M, \mathbb{R}_\delta)$-generic, then:
\begin{itemize}
 \item $M \cap H_{\delta} \in GIC_{\omega_1}$;
 \item\label{item_Persistence} If $W$ is any transitive $ZF$ model such that $(M,g, \mathbb{R}_\delta) \in W$ and $\omega_1^W = \omega_1^V$, then $W \models$ ``$M \in GIC_{\omega_1}$''.  (Here $W$ could, for example, be any outer model of $V$ which has the same $\omega_1$).
\end{itemize}
\end{enumerate}    
\end{theorem}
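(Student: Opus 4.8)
The plan is to exhibit the poset $\mathbb{R}_\delta$ by hand and verify (1)--(3); the conclusion (2) will follow from the argument for (3) applied with a genuinely $V$-generic filter, so the real content is the \emph{design} of $\mathbb{R}_\delta$ together with (1) and (3). Fix a wellorder $\lhd$ of $H := H_\delta$ and work with the structure $\mathcal{H} := (H,\in,\lhd)$. A condition of $\mathbb{R}_\delta$ will be a countable, $\subseteq$-continuous, $\in$-increasing chain $p = \langle N^p_\alpha \mid \alpha\le\gamma^p\rangle$ of countable elementary substructures of $\mathcal{H}$, each of which is \emph{good} --- where ``good'' is the design parameter: I would require each $N^p_\alpha$ to be $\omega_1$-guessing as a submodel of $\mathcal{H}$ and, crucially, to reflect a fixed list of dense sets and ``sealing promises'' chosen so that the union of a long chain of good models is forced to be $\omega_1$-guessing (a finite promise component, suppressed here, would be carried alongside). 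Order $\mathbb{R}_\delta$ by end-extension: $q\le p$ iff $\gamma^q\ge\gamma^p$ and $q\restriction(\gamma^p+1)=p$. The first thing to establish is the closure property on which everything rests: whenever $X\prec H_\theta$ is countable with $\mathbb{R}_\delta,\delta,\lhd\in X$, the model $X\cap H$ is itself good, so that for any $p\in X\cap\mathbb{R}_\delta$ the one-step extension $p{}^\frown\langle X\cap H\rangle$ is again a condition (legitimate since $N^p_{\gamma^p}\in X\cap H$, $X\cap H\prec\mathcal{H}$ is countable, and $X\cap H$ is good by design of the ``good'' class).

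\textbf{Properness and (1).} With the previous paragraph in hand, properness is the standard reflection argument: for $X\prec H_\theta$ countable with $\mathbb{R}_\delta\in X$ and $p\in X\cap\mathbb{R}_\delta$, the condition $q:=p{}^\frown\langle X\cap H\rangle$ is $(X,\mathbb{R}_\delta)$-generic, because given dense $D\in X$ and any $q'\le q$, the tail of $q'$ lying above the model $X\cap H$ can be reproduced inside $X$ (using $X\cap H\prec\mathcal{H}$ and $X\prec H_\theta\models$``$D$ is dense''), yielding an element of $D\cap X$ compatible with $q'$. For (1): a condition is a countable object hereditarily built from $H$, so after the harmless normalization $|H_\delta|=\delta$ --- which holds in all applications, e.g.\ when $\delta$ is inaccessible (Theorems \ref{thm_PFAplus_impliesNonPresatGIC}, \ref{thm_Consistent_GIC_Precip}) or under the forcing axioms in play --- there are at most $|H|^{\aleph_0}=\delta$ conditions, whence $\mathbb{R}_\delta\in H_{\delta^+}$.

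\textbf{The generic object and the guessing property.} For each $x\in H$ the set $\{p\mid x\in\bigcup_{\alpha\le\gamma^p}N^p_\alpha\}$ is dense (append a good model containing $x$ together with the current top model), and for each $\xi<\omega_1$ the set $\{p\mid\gamma^p>\xi\}$ is dense. Hence if $G$ is generic, $\langle N^G_\xi\mid\xi<\omega_1\rangle$ is a $\subseteq$-continuous $\in$-chain of countable models with $N^G_\xi\in N^G_{\xi+1}\subseteq H$ and $\bigcup_{\xi<\omega_1}N^G_\xi=H_\delta^V$; by the internally-club characterization recalled in Section \ref{subsec_Prelim_GICetc} this already gives $H_\delta^V\in IC_{\omega_1}$ and $|H_\delta^V|=\omega_1\subseteq H_\delta^V$ in $V[G]$. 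The crux is the $\omega_1$-approximation property of $H_\delta^V$ in $V[G]$. Let $A\in V[G]$ with $A\subseteq\eta\in H_\delta^V$ and $z\cap A\in H_\delta^V$ for every countable $z\in H_\delta^V$; we may assume $A\notin V$ (otherwise $A\in H_{(|\eta|^+)}^V\subseteq H_\delta^V$ by regularity of $\delta$), so $A\cap N^G_\xi\in H_\delta^V$ for every $\xi$ while $A=\bigcup_\xi(A\cap N^G_\xi)\notin V$. The ``good''/sealing requirement on the models in the chain is exactly what rules this out: running the sealing promises along a fusion of conditions below an arbitrary condition forces $\dot A$, on a dense set, to equal a single ground-model set, so $A\in H_\delta^V$. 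Isolating and proving this --- ``$\mathbb{R}_\delta$ adds no genuinely new $\omega_1$-approximated subset of any $\eta\in H_\delta$'' --- is the main obstacle, and it is where the precise definition of ``good'' (which dense sets and promises must be reflected) has to be pinned down so as to be simultaneously strong enough to seal and simple enough to survive the one-step-extension argument for properness.

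\textbf{(3) and persistence.} Let $M$ be as in the statement (so $\mathbb{R}_\delta,H_\delta\in M$; in applications $M\prec H_\lambda$ for some $\lambda>\delta$) and let $g$ be $(M,\mathbb{R}_\delta)$-generic, so $g\cap M$ meets every dense subset of $\mathbb{R}_\delta$ that lies in $M$. The density arguments above, relativized inside $M$, show $\vec{N}^M:=\bigcup\{p\mid p\in g\cap M\}$ is a $\subseteq$-continuous $\in$-chain of countable models of length $M\cap\omega_1=\omega_1$ (using $\omega_1\subseteq M$) with $N^M_\xi\in N^M_{\xi+1}\subseteq M\cap H_\delta$ and $\bigcup_\xi N^M_\xi=M\cap H_\delta$; hence $M\cap H_\delta\in IC_{\omega_1}$. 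Rerunning the crux argument ``inside $M$'' --- every dense set witnessing the sealing lies in $M$ and is met by $g$ --- shows that the transitive collapse of $M\cap H_\delta$ has the $\omega_1$-approximation property, so $M\cap H_\delta\in GIC_{\omega_1}$ (and hence $M\in GIC_{\omega_1}$, these being equivalent for the elementary submodels that arise). For (2), take $g$ a $V$-generic filter and $M=H_{\delta^+}^V$: then $M\cap H_\delta=H_\delta^V$ and the same reasoning gives $\Vdash_{\mathbb{R}_\delta}\check{H}\in GIC_{\omega_1}$. Finally persistence is immediate from the shape of the witnesses: in any transitive $ZF$ model $W\ni(M,g,\mathbb{R}_\delta)$ with $\omega_1^W=\omega_1$, the very same $\omega_1$-sequence $\vec{N}^M$ is (absolutely) a $\subseteq$-continuous $\in$-chain with union $M\cap H_\delta$, so $W\models$``$M\cap H_\delta\in IC_{\omega_1}$'', and the sealing data carried by $g\cap M$ remains a valid seal in $W$, since a new $W$-set $A\subseteq\eta\in M\cap H_\delta$ that were $\omega_1$-approximated by $M\cap H_\delta$ would contradict it --- the relevant countable objects, and ``$\omega_1$'' itself, are computed the same way in $W$. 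Thus $W\models$``$M\in GIC_{\omega_1}$'', completing (3). The whole difficulty of the theorem is concentrated in the third paragraph: engineering the notion of ``good'' so that the $\omega_1$-approximation property of the generic union is forced, is witnessed by robust countable data that persists under any $\omega_1$-preserving extension, and is still compatible with the reflection step needed for properness.
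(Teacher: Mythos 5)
This theorem is cited from Viale--Weiss~\cite{VW_ISP}; the present paper does not prove it, so there is no internal proof to compare against. Judged as a standalone proof of the cited result, your proposal is a \emph{plan}, not a proof, and you essentially acknowledge this yourself when you write that ``isolating and proving this \ldots\ is the main obstacle, and it is where the precise definition of `good' \ldots\ has to be pinned down.''

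The gap is concrete. Everything in the argument is conditional on a class of ``good'' countable models that is (a) closed in the sense that $X\cap H_\delta$ is good for every suitable countable $X\prec H_\theta$ (needed for your one-step-extension properness argument), (b) strong enough that a fusion along the dense ``sealing'' sets forces every $\omega_1$-approximated subset of an $\eta\in H_\delta$ to be a ground-model set (needed for item (2)), and (c) witnessed by countable data that survives any $\omega_1$-preserving outer extension (needed for item \ref{item_Persistence}). You never say what this class is, and (a)--(c) pull in different directions: a property strong enough to seal tends not to be downward reflected to $X\cap H_\delta$, which is precisely why the construction is nontrivial. As written, the properness proof, the approximation proof, and the persistence proof are all circular on the missing definition. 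There are also two more local problems. First, the stipulation that each $N^p_\alpha$ be ``$\omega_1$-guessing as a submodel of $\mathcal{H}$'' does not parse in the paper's terms: the paper's definition of $\omega_1$-guessing requires $\omega_1\subseteq M$, so it is not a meaningful requirement on a countable model, and if you instead mean the transitive collapse of $N^p_\alpha$ satisfies the approximation property relative to its own $\omega_1^{H_N}$, then (i) this is a different notion and (ii) it is not clear it holds of any $X\cap H_\delta$, which would break the closure needed for properness. Second, your verification of (1) relies on $|H_\delta|=\delta$, which is a genuine restriction (it can fail for regular successor $\delta$ with $2^{<\delta}>\delta$); the theorem as stated does not carry that hypothesis, so either the intended $\mathbb{R}_\delta$ has a more economical coding than ``countable sequences from $H_\delta$,'' or the theorem tacitly assumes $|H_\delta|=\delta$ --- either way this needs to be addressed rather than waved off as ``harmless normalization.'' In short, you have correctly identified the shape of the argument (chain of countable models for $IC_{\omega_1}$, plus a sealing mechanism for $G_{\omega_1}$, plus absoluteness of the witnesses for persistence), but the entire mathematical content of the theorem lives in the part you left parametric.
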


Viale proved:
\begin{lemma}\label{lem_MA_IU}
(Viale) $MA_{\omega_1}$ implies $G_{\omega_1} \subseteq IU_{\omega_1}$; so $G_{\omega_1} = GIU_{\omega_1}$.
\end{lemma}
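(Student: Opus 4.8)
The plan is to establish $G_{\omega_1}\subseteq IU_{\omega_1}$; the final clause $G_{\omega_1}=GIU_{\omega_1}$ is then immediate, since $GIU_{\omega_1}=G_{\omega_1}\cap IU_{\omega_1}$. Fix $M\in G_{\omega_1}$. It suffices to show that every $z\in[M]^\omega$ is contained in some countable $z'\in M$: granting this, an easy recursion of length $\omega_1$ --- at stage $\xi$ choosing $N_\xi\in M$ countable covering the countable set $\{a_\eta:\eta<\xi\}\cup\{N_\eta:\eta<\xi\}$, where $\langle a_\eta:\eta<\omega_1\rangle$ enumerates $M$ --- produces an $\in$-chain $\langle N_\xi:\xi<\omega_1\rangle$ with union $M$, witnessing $M\in IU_{\omega_1}$. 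Passing to the transitive collapse $\sigma=\sigma_M\colon H_M\to M$ and setting $\bar z:=\sigma^{-1}[z]\in[H_M]^\omega$, the covering statement reduces (using $\omega_1\subseteq M$, so $H_M$ computes ``countable'' correctly and $\sigma$ preserves countability) to the following: for every transitive $ZF^-$-model $H$ with $|H|=\omega_1\subseteq H$ and the $\omega_1$-approximation property, $H\cap[H]^\omega$ is $\subseteq$-cofinal in $[H]^\omega$ (i.e.\ the $\omega_1$-approximation property of $(H,V)$ forces the $\omega_1$-cover property).

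Suppose not, and fix $\bar z\in[H]^\omega$ covered by no countable member of $H$. The idea is to use $MA_{\omega_1}$ to manufacture a subset $A$ of some ordinal $\eta\in H$ --- with $\eta$ uncountable in $H$ --- that is \emph{fresh over $H$}: $A\notin H$ yet $A\cap x\in H$ for every countable $x\in H$; this contradicts the $\omega_1$-approximation property at once. The forcing $\mathbb{P}$ will build $A$ by finite approximations to its characteristic function together with finitely many side conditions drawn from $H\cap[H]^{\le\omega}$, the side conditions being what allows an infinite intersection $A\cap x$ --- which a purely finite condition could never determine --- to be pinned down inside $H$. Since $|H|=\omega_1$, only $\omega_1$-many dense sets are relevant: sets $D_x$ (for $x\in H$ countable) forcing ``$A\cap x$ is decided to equal a specific element of $H$'', and sets $E_C$ (for $C\in H$ with $C\subseteq\eta$) forcing ``$A\neq C$''; the latter are dense because $\eta$ is uncountable, and meeting all of them forces $A\notin H$. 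Hence, if $\mathbb{P}$ is ccc, $MA_{\omega_1}$ supplies --- already in $V$ --- a filter meeting all the $D_x$ and $E_C$, and the associated $A=A_g\in V$ is the desired fresh set.

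The heart of the matter, and the step I expect to be genuinely delicate, is the design of $\mathbb{P}$: one must choose the side-condition component so that $\mathbb{P}$ remains ccc --- side-condition forcings being typically only proper, this constrains sharply how the side conditions may be chosen --- while the $D_x$ nonetheless stay dense, and it is precisely here that the uncoverability of $\bar z$ must be used: it is the obstruction preventing $H$ from ``seeing'' enough of its own countable subsets, and hence the mechanism by which the side conditions can be kept thin, antichains countable, and the decisions ``$A\cap x\in H$'' always attainable. Granting a suitable such $\mathbb{P}$, the endgame is immediate: $A\subseteq\eta\in H$ is $\omega_1$-approximated by $H$, so the $\omega_1$-approximation property yields $A\in H$, contradicting $A\neq C$ for every $C\in H$ with $C\subseteq\eta$.
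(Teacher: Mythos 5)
Your reduction to the $\omega_1$-cover property (i.e.\ that $H_M\cap[H_M]^\omega$ is $\subseteq$-cofinal in $[H_M]^\omega$) is fine, and this is indeed what $IU_{\omega_1}$ amounts to. But the proposal then leaves the crucial step completely open: you do not construct the poset $\mathbb{P}$, you only assert that some ccc side-condition forcing must exist and that designing it is ``genuinely delicate.'' That is a genuine gap, not a sketch. Posets whose conditions carry finite side conditions drawn from $H\cap[H]^{\le\omega}$ are, prima facie, at best proper rather than ccc, and without ccc $MA_{\omega_1}$ gives you nothing; the proposal offers no reason to believe the antichain condition can actually be arranged.

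The paper avoids all of this by citing Viale's result that any $\omega_1$-guessing $M$ with $|M|<\mathfrak{p}$ (the pseudo-intersection number) lies in $IU_{\omega_1}$, together with the standard fact that $MA_{\omega_1}$ yields $\mathfrak{p}\ge\omega_2$. Unwound, that argument realizes your ``fresh set'' idea with none of the bespoke machinery: if $\bar z\in[H]^\omega$ is uncovered, then $\{\,\bar z\setminus a \;:\; a\in H,\ a\ \text{countable}\,\}$ is a family of at most $\omega_1$ infinite sets with the strong finite intersection property, hence (since $\omega_1<\mathfrak{p}$) it has a pseudo-intersection $b\subseteq\bar z$. Then $b\cap a$ is finite --- so automatically in $H$ --- for every countable $a\in H$, yet $b$ is infinite, so $b\notin H$ (else take $a=b$). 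That $b$ is precisely the fresh set contradicting the $\omega_1$-approximation property. Note also that you overshoot the target: a countable fresh $A\subseteq\bar z$ already suffices (the approximation property applies to countable $A\subseteq\eta$ just as well), and your dense sets $E_C$ are then superfluous, since an infinite $b$ almost disjoint from every countable $a\in H$ is automatically not a countable member of $H$. The only forcing input needed is the $\sigma$-centered Mathias--Prikry poset behind Bell's theorem ($MA_{\omega_1}$ for $\sigma$-centered posets is equivalent to $\mathfrak{p}>\omega_1$), not an ad hoc side-condition poset.
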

\begin{proof}
Viale~\cite{Viale_GuessingModel} proved that if $M$ is $\omega_1$-guessing and $|M|$ is strictly less than the so-called \emph{pseudo-intersection number}, then $M \in IU_{\omega_1}$.  $MA_{\omega_1}$ implies that the pseudo-intersection number is $\ge \omega_2$. 
\end{proof}

Finally, we point out the standard fact that all of these classes \emph{project}:
\begin{lemma}\label{lem_ClassesProject}
Let $Z$ be any of the classes $G_{\omega_1}$, $IA_{\omega_1}$, $IC_{\omega_1}$, $IS_{\omega_1}$, or $IU_{\omega_1}$.  If $M \in Z$ and $\theta \ge \omega_2$ is a regular cardinal then $M \cap H_\theta \in Z$.
\end{lemma}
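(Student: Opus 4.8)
The plan is to treat $G_{\omega_1}$ separately from the other four classes, and to work throughout with $M$ as an elementary submodel of some $(H_\lambda,\in)$ with $\lambda$ large and regular (the ambient situation in which these classes are used). First I would reduce to the case $\theta\in M$: this is the only case needed in the applications, and in general, if $\theta\notin M$ then $M\cap\theta$ is bounded below $\theta$ (as $|M|=\omega_1<\mathrm{cf}(\theta)$), and one may replace $\theta$ by the least ordinal of $M$ above $\sup(M\cap\theta)$, which turns out to be again a regular cardinal and yields the same set $M\cap H_\theta$. Granting $\theta\in M$, elementarity gives $M\cap H_\theta=(H_\theta)^M\prec H_\theta$, so $M\cap H_\theta$ is a $ZF^-$-model; together with $\omega_1\subseteq M\cap H_\theta$ and $|M\cap H_\theta|=\omega_1$ this puts $M\cap H_\theta$ in $\Lambda$, and it remains only to verify the defining clause of $Z$.

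For $Z\in\{IA_{\omega_1},IC_{\omega_1},IS_{\omega_1}\}$ I would fix a $\subseteq$-continuous $\in$-chain $\langle N_\xi\mid\xi<\omega_1\rangle$ of elementary submodels of $H_\lambda$ with union $M$ witnessing $M\in Z$ via the chain characterizations above, and, passing to a tail of the chain, arrange $\theta\in N_0$ and hence $\theta\in N_\xi$ for all $\xi$. Setting $N'_\xi:=N_\xi\cap H_\theta$, the sequence $\langle N'_\xi\mid\xi<\omega_1\rangle$ is again a $\subseteq$-continuous $\in$-chain with union $M\cap H_\theta$: it is $\subseteq$-continuous because $(\,\cdot\,)\cap H_\theta$ commutes with unions; it is an $\in$-chain because $N_\xi\cap H_\theta\in N_{\xi+1}$ (both $N_\xi$ and $H_\theta$ lie in $N_{\xi+1}\prec H_\lambda$) while also $N_\xi\cap H_\theta\in H_\theta$ (its transitive closure has size $<\theta$, being a union of $\le\omega_1<\mathrm{cf}(\theta)$ sets of size $<\theta$), so $N'_\xi\in N_{\xi+1}\cap H_\theta=N'_{\xi+1}$. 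The side conditions transfer: whenever $N_\xi\in M$ then $N_\xi\cap H_\theta\in M$ (again $N_\xi,H_\theta\in M\prec H_\lambda$) and $N_\xi\cap H_\theta\in H_\theta$, so $N'_\xi\in M\cap H_\theta$; and whenever $\langle N_\eta\mid\eta<\zeta\rangle\in M$ then $\langle N'_\eta\mid\eta<\zeta\rangle\in M\cap H_\theta$ likewise. Thus the approachability/club/stationary property of $\langle N_\xi\rangle$ over $M$ is inherited by $\langle N'_\xi\rangle$ over $M\cap H_\theta$, giving $M\cap H_\theta\in Z$. The case $Z=IU_{\omega_1}$ is even more direct from the $\subseteq$-cofinality definition: given $x\in[M\cap H_\theta]^\omega$, choose $y\in M\cap[M]^\omega$ with $x\subseteq y$ and note that $y\cap H_\theta$ lies in $M\cap H_\theta$, is countable, and contains $x$.

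For $Z=G_{\omega_1}$ I would go through the transitive collapse. Let $\sigma:=\sigma_M\colon H_M\to M$ be the inverse collapse and $\bar\theta:=\sigma^{-1}(\theta)$. Since $\theta$ is a cardinal of $V$ it is a cardinal of $M\prec H_\lambda$, so $\bar\theta$ is a cardinal of $H_M$ with $\bar\theta>\omega_1^{H_M}=\omega_1$, and a short computation with $\sigma$ shows that the transitive collapse of $M\cap H_\theta$ is exactly $(H_{\bar\theta})^{H_M}$. Since $G_{\omega_1}$ is isomorphism-invariant, it therefore suffices to prove the following sublemma: if $H$ is a transitive $ZF^-$-model with the $\omega_1$-approximation property (with respect to $V$) and $\mu>\omega_1^H$ is a cardinal of $H$, then $(H_\mu)^H$ also has the $\omega_1$-approximation property. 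To see this, suppose $A\subseteq\eta$ with $\eta\in(H_\mu)^H$ and $z\cap A\in(H_\mu)^H$ for every countable $z\in(H_\mu)^H$; given an arbitrary countable $w\in H$, the set $w\cap\eta$ is a countable subset of $\eta<\mu$, hence lies in $(H_\mu)^H$ (as $\mu$ is an $H$-cardinal), so $w\cap A=(w\cap\eta)\cap A\in(H_\mu)^H\subseteq H$. Thus $A$ is $\omega_1$-approximated by $H$, so $A\in H$ by hypothesis on $H$, and then $A\in(H_\mu)^H$ because $A\subseteq\eta<\mu$ with $\mu$ an $H$-cardinal. Applying the sublemma with $H=H_M$ and $\mu=\bar\theta$ shows that the transitive collapse of $M\cap H_\theta$ has the $\omega_1$-approximation property, i.e.\ $M\cap H_\theta$ is $\omega_1$-guessing; with $|M\cap H_\theta|=\omega_1\subseteq M\cap H_\theta$ this gives $M\cap H_\theta\in G_{\omega_1}$.

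The substantive step is the $G_{\omega_1}$ case, specifically the sublemma on cardinal-initial-segments: the point is that being approximated by the countable subsets which already live in the small model $(H_\mu)^H$ is enough to force $A$ into the large model $H$, because any countable $w\in H$ can be trimmed to $w\cap\eta$ without losing any information about $A\subseteq\eta$, and $w\cap\eta$ is then small enough to belong to $(H_\mu)^H$. The four chain-based cases are routine bookkeeping once one observes that $(\,\cdot\,)\cap H_\theta$ preserves $\subseteq$-continuous $\in$-chains of elementary submodels and commutes with membership in $M$; the only mild annoyance is the reduction to $\theta\in M$, which could be bypassed entirely by simply restricting the statement to that case, since that is all that is ever used.
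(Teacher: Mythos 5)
Your overall strategy is close to the paper's: project the chain for the four chain-characterized classes, and use the trimming observation (a countable $w\in H$ can be replaced by $w\cap\eta$, which lands in the smaller model) to transfer the $\omega_1$-approximation property for $G_{\omega_1}$. The main cosmetic difference in the $G_{\omega_1}$ case is that you pass to the transitive collapse and prove a clean sublemma about cardinal-initial-segments of models with the approximation property, whereas the paper works directly with a non-transitivized reformulation of $G_{\omega_1}$; these are the same idea.

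The genuine issue is the reduction to $\theta\in M$. You assert that the least ordinal $\theta'\in M$ above $\sup(M\cap\theta)$ ``turns out to be again a regular cardinal.'' One can indeed show $\theta'\ge\theta$ and that $\theta'$ is a \emph{cardinal} (if $|\theta'|<\theta'$ then $|\theta'|\in M$ forces $|\theta'|\le\sup(M\cap\theta)<\theta$, whence $\theta'<|\theta'|^+\le\theta$, contradiction), and one can even rule out $\mathrm{cf}(\theta')\le\omega_1$ (any continuous cofinal map on $\mathrm{cf}(\theta')\le\omega_1\subseteq M$ would put elements of $M$ into the gap $(\sup(M\cap\theta),\theta')$). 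But regularity of $\theta'$ does not follow, and you give no argument for it. Since the lemma is stated for regular $\theta$, the reduction as written is not licensed. (Your chain argument also quietly needs $\mathrm{cf}(\theta')>\omega$ so that $N_\xi\cap H_{\theta'}\in H_{\theta'}$; that much \emph{is} provable, but it is not what you claimed.) Note also that the parenthetical fallback --- ``restrict the statement to $\theta\in M$, since that is all that is ever used'' --- is not safe: in the proof that PFA yields a tower of height $\delta$ on $GIC_{\omega_1}$, the lemma is applied to $M\cap H_\lambda$ for arbitrary regular $\lambda<\delta$ and arbitrary $M\in GIC_{\omega_1}\cap\wp_{\omega_2}(V_\delta)$, and there is no reason for $\lambda$ to belong to $M$.

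The paper sidesteps this entirely: its $G_{\omega_1}$ argument makes no case distinction on $\theta\in M$ (the trimming works for any regular $\theta\ge\omega_2$), and for the chain classes it treats $\theta\notin M$ head-on by using that $M\cap\mathrm{ORD}$ is $\omega$-closed when $M\in IC_{\omega_1}$, so that $N_\xi\cap H_\theta=N_\xi\cap H_{\eta_\xi}$ for some $\eta_\xi\in M\cap\theta$ and the latter is in $M$. You should either adopt a direct treatment of the $\theta\notin M$ case like this, or replace ``regular cardinal'' by ``cardinal of uncountable cofinality'' in your reduction and check that this weaker property suffices for all of your subsequent steps (it does, but you need to say so and prove it).
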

\begin{proof}
Here it will be more convenient to work with the following ``non-transitivised'' characterization of $G_{\omega_1}$:  $M \in G_{\omega_1}$ iff for every $\eta \in M$ and every $A \subset \eta \cap M$:  if $A \cap z \in M$ for every countable $z \in M$, then there is some $A' \in M$ such that $A' \cap M = A$.  

Now suppose $M \in G_{\omega_1}$ and $\theta \ge \omega_2$ is regular; we want to see that $M \cap H_\theta \in G_{\omega_1}$.  Let $\eta \in M \cap \theta$ and $A \subset \eta \cap M$, and suppose $z \cap A \in M \cap H_\theta$ for every countable $z \in M \cap H_\theta$.  Then clearly  $z \cap A \in M$ for every countable $z \in M$, since $A \subseteq \theta$.  Since $M \in G_{\omega_1}$, then there is an $A' \in M$ with $A' \cap M = A$.  Set $A'' := A' \cap \eta \in M \cap H_\theta$.  Then $A'' \cap (M \cap H_\theta) = A$.  
  
For the other classes, we present the argument for $IC_{\omega_1}$; the rest are similar.  Suppose $M \in IC_{\omega_1}$ as witnessed by a sequence $\langle N_\xi \ | \ \xi < \omega_1 \rangle$ where $N_\xi \in M$ for every $\xi < \omega_1$.  Let $\theta$ be a regular uncountable cardinal.  Clearly the sequence $\langle N_\xi \cap H_\theta \ | \ \xi < \omega_1 \rangle$ is $\subset$-increasing and $\subset$-continuous with union $M \cap H_\theta$; we just need to see that $N_\xi \cap H_\theta \in M \cap H_\theta$ for every $\xi < \omega_1$.  If $\theta \in M$ this is trivial.  If $\theta \notin M$ then since $M \in IC_{\omega_1}$, $M \cap ORD$ is an $\omega$-closed set of ordinals and so $sup(M \cap \theta)$ has uncountable cofinality.  Then for each $\xi < \omega_1$ there is some $\eta_\xi < \theta$, $\eta_\xi \in M$, such that $N_\xi \cap H_\theta = N_\xi \cap  H_{\eta_\xi}$; and the latter is in $M$ since both $\eta_\xi$ and $N_\xi$ are in $M$.    
\end{proof}

It is interesting to point out that by the argument of Proposition 2.4 of \cite{MR1359154}, if $Z$ is any of the classes $IA_{\omega_1}$, $IC_{\omega_1}$, $IS_{\omega_1}$, or $IU_{\omega_1}$, then $Z$ also \emph{lifts with respect to the nonstationary ideal}; that is, if $S$ is a stationary subset of $Z \cap \wp_{\omega_2}(H_\theta)$ and $\theta' >> \theta$, then $Z \cap \{  M \in \wp_{\omega_2}( H_{\theta'})  \ | \ M \cap H_\theta \in S \}$ is also stationary.  This implies that $\langle NS \upharpoonright (Z \cap \wp_{\omega_2}(H_\theta)) \ | \ \theta \in ORD \rangle$ forms a tower (see section \ref{sec_TowersAndWeakChang}).  On the other hand, this can trivially fail for the class $Z = G_{\omega_1}$ because $G_{\omega_1} \cap \wp_{\omega_2}(H_\theta)$ might be nonstationary for large $\theta$.  Even if $G_{\omega_1} \cap \wp_{\omega_2}(H_\theta)$ is stationary for every regular $\theta \ge \omega_2$ (as is the case under PFA), it is still not clear---and seems doubtful---that $\langle NS \upharpoonright G_{\omega_1} \cap \wp_{\omega_2}(H_\theta) \ | \ \theta \in ORD \rangle$ necessarily forms a tower.   

\subsection{Forcing Axioms, Projective Stationarity, and Reflection Principles}\label{subsec_FA_ReflectionPrinciples}

Let $\Gamma$ be a class of posets and $\beta$ an ordinal.  $FA^{+\beta}(\Gamma)$ means that for every $\mathbb{P} \in \Gamma$, for every $\omega_1$-sized collection $\mathcal{D}$ of dense subsets of $\mathbb{P}$, and for every sequence $\langle \dot{S}_\xi \ | \ \xi < \beta \rangle$ such that $\Vdash_{\mathbb{P}}$ ``$\dot{S}_\xi \subseteq \omega_1$ is stationary'' for every $\xi < \beta$, then there is a filter $F \subset \mathbb{P}$ meeting every $D \in \mathcal{D}$ and such that for every $\xi < \beta$:  $(\dot{S}_\xi)_F:= \{ \alpha < \omega_1 \ | \ (\exists q \in F)(q \Vdash \check{\alpha} \in \dot{S}_\xi)  \}$ is stationary.  $FA(\Gamma)$ means $FA^0(\Gamma)$ and $FA^+(\Gamma)$ means $FA^{+1}(\Gamma)$.  Martin's Axiom is $MA(\text{ccc posets})$, the Proper Forcing Axiom (PFA) is $MA(\text{proper posets})$, and Martin's Maximum (MM) is $MA(\text{posets preserving stationary subsets of } \omega_1)$.  We caution that elsewhere in the literature the notation $PFA^{++}$ and $MM^{++}$ are sometimes used for what we call $PFA^{+\omega_1}$ and $MM^{+\omega_1}$.  It is widely known that the standard iteration used to produce a model of $MM$ (resp. $PFA$) actually produces a model of $MM^{+\omega_1}$ (resp. $PFA^{+\omega_1}$).    

For a regular cardinal $\theta \ge \omega_2$, $RP([\theta]^\omega)$ means that whenever $S \subset [\theta]^\omega$ is stationary, then there is an $X$ such that $\omega_1 \subseteq X$, $|X| = \omega_1$, and $S \cap [X]^\omega$ is stationary in $[X]^\omega$.  It is well-known that $FA^+(\sigma\text{-closed})$ implies $RP([\theta]^\omega)$ for all regular $\theta \ge \omega_2$ (so in particular RP follows from $PFA^+$); and by \cite{MR924672} this is also implied by MM.  A set $P \subset [X]^\omega$ is \emph{projective stationary} iff for every stationary $T \subset \omega_1$, $\{ Y \in P \ | \ Y \cap \omega_1 \in T   \}$ is stationary in $[X]^\omega$; equivalently, the projection of $P$ to $\omega_1$ contains a club subset of $\omega_1$.  For $\theta \ge \omega_2$, the \emph{Strong Reflection Principle at $\theta$ ($SRP(\theta)$)} is the statement:  for every projective stationary $P \subset [H_\theta]^\omega$, there is a continuous elementary chain $\langle N_\xi \ | \ \xi < \omega_1 \rangle$ of countable models such that every $N_\xi$ is an element of $P$ (i.e. there is some $\omega_1$-sized subset $X$ of $\theta$ such that $P \cap [X]^\omega$ contains a club in $[X]^\omega$).  It was shown in \cite{MR1668171} that Martin's Maximum implies $SRP(\theta)$ for all regular $\theta \ge \omega_2$.  Extending a result of Gitik~\cite{MR820120}, Velickovic proved the following theorem (see Section 3 of \cite{MR1174395}):
\begin{theorem}\label{thm_Velick}
Whenever $C \subset [\omega_2]^\omega$ is club, $x \in \mathbb{R}$, and $T \subset \omega_1$ is stationary, then there are $a,b,c \in C$ such that $a \cap \omega_1 = b \cap \omega_1 = c \cap \omega_1 \in T$ and $x  \in L_{\omega_2}[a,b,c]$.
\end{theorem}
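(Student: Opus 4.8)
The plan is a coding argument in the spirit of Gitik's original theorem: I will construct $a$, $b$, $c$ so that the real $x$ can be read off from the order-theoretic way the three countable sets of ordinals interleave with one another, and I will arrange this reading to be a recursion short enough (of length below $\omega_2$) and absolute enough ($\Delta_1$) that $L_{\omega_2}[a,b,c]$ can carry it out internally.

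\emph{Reduction and set-up.} Fix a large regular $\theta$, a well-order $<_\theta$ of $H_\theta$, and let $\mathfrak{A} := (H_\theta, \in, <_\theta, C, T, x)$. The set $C^\ast := \{ M \cap \omega_2 \ | \ M \prec \mathfrak{A} \text{ countable} \}$ is a club in $[\omega_2]^\omega$ with $C^\ast \subseteq C$, since $C \in M \prec \mathfrak{A}$ forces $M \cap \omega_2 \in C$; moreover $\{ M \cap \omega_1 \ | \ M \prec \mathfrak{A} \text{ countable}\}$ contains a club in $\omega_1$ and hence meets $T$. So it suffices to produce countable $M_a, M_b, M_c \prec \mathfrak{A}$ with $M_a \cap \omega_1 = M_b \cap \omega_1 = M_c \cap \omega_1 =: \delta \in T$ such that, setting $a := M_a \cap \omega_2$ and likewise $b$, $c$, we get $x \in L_{\omega_2}[a,b,c]$. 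Since $a$, $b$, $c$ will be \emph{elements} of $L_{\omega_2}[a,b,c]$, that model computes their order types and the function recording their mutual interleaving outright; so one only needs to steer the triple so that \emph{this} interleaving encodes $x$. No external combinatorial object is invoked, which is exactly why the conclusion can be stated for $L_{\omega_2}[a,b,c]$ itself rather than for $L_{\omega_2}$ of the sets together with some auxiliary sequence.

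\emph{The construction.} First pick an anchor: a countable $M_c \prec \mathfrak{A}$ with $\delta := M_c \cap \omega_1 \in T$, and set $c := M_c \cap \omega_2 \in C^\ast$. Using that $M_c$ is an elementary submodel which knows that $C$ is club in $[\omega_2]^\omega$ and which knows $x$, build the other two models by an $\omega$-step recursion: at stage $n$ one has finite approximations and extends them so that the $n$-th ``oscillation bit'' of the pair-plus-anchor configuration comes out equal to $x(n)$; a fusion/amalgamation step produces $M_a$, $M_b$ in the limit, and one verifies these can be taken to be elementary submodels of $\mathfrak{A}$ with $\omega_1$-part exactly $\delta$ and $\omega_2$-traces in $C^\ast$. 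The role of $M_c$ is to be the common ambient structure inside which $M_a$ and $M_b$ are manufactured and against which their interleaving is measured; because $M_c$ already has $\omega_1$-part $\delta \in T$ and trace in $C^\ast \subseteq C$, all three of $a$, $b$, $c$ lie in $C$ with the required common $\omega_1$-projection $\delta$.

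\emph{Main obstacle and the finish.} The real work is the recursion: one must keep the $\omega_1$-projections of all approximations pinned at $\delta$ (so the limit lands in $T$) while simultaneously steering the $\omega_2$-level interleaving of the three traces so that it spells out $x$; reconciling these two demands is the genuine combinatorial content, and here one follows Velickovic's elaboration of Gitik's ``three models'' device (equivalently, Todorcevic's analysis of the oscillation of two sets of ordinals relative to a fixed club), including its care in controlling the projection to $\omega_1$. Once the triple is in hand the rest is routine: the order types of $a$, $b$, $c$ and the interleaving function are $\Delta_1$ in those sets, the decoding recursion has length below $\omega_2$, and therefore $x$ already appears in some $L_\eta[a,b,c]$ with $\omega_1 < \eta < \omega_2$, giving $x \in L_{\omega_2}[a,b,c]$.
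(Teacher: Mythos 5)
The paper does not reprove this theorem: it is quoted as Velickovic's (Section~3 of \cite{MR1174395}, extending Gitik~\cite{MR820120}), so there is no internal proof to compare against and your write-up has to be assessed on its own. The reduction is fine (pass to $C^* := \{M \cap \omega_2 : M \prec \mathfrak{A}\ \text{countable}\} \subseteq C$; note the set of $\omega_1$-projections contains a club and so meets $T$), and the overall shape --- code $x$ into an order-theoretic relation among three traces sharing an $\omega_1$-projection in $T$, then decode inside $L_{\omega_2}[a,b,c]$ --- is the right one.

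But the heart of the argument is absent. The paragraph labelled ``\emph{The construction}'' is a promise rather than a proof: you never define the ``$n$-th oscillation bit,'' never isolate the property of the finite approximations that lets you realize either value of that bit at stage $n$, and never show that the limits $M_a, M_b$ come out with $\omega_1$-part exactly $\delta$ and $\omega_2$-trace in $C^*$. You acknowledge the last point yourself and then defer to ``Velickovic's elaboration of Gitik's `three models' device \ldots including its care in controlling the projection to $\omega_1$'' --- which is precisely the theorem being proved, so this is circular. There is also a concrete structural problem left unaddressed: $M_a, M_b$ cannot be \emph{elements} of $M_c$ (that would force $M_a \cap \omega_1 < \delta$), so they must be \emph{subsets} of $M_c$ that are nonetheless elementary in $\mathfrak{A}$, whose $\omega_1$-parts fill up to $\delta$, and whose $\omega_2$-traces land in $C^*$. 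The standard device is to fix a continuous $\in$-chain $\langle N_\xi : \xi < \omega_1 \rangle$ of countable elementary submodels, set $M_c := N_\delta$ for suitable $\delta \in T$, and take $M_a = \bigcup_n N_{\xi^a_n}$, $M_b = \bigcup_n N_{\xi^b_n}$ with $\langle \xi^a_n\rangle$, $\langle \xi^b_n\rangle$ cofinal in $\delta$; both traces then come out right because $C^*$ is closed under increasing $\omega$-unions. The genuine content --- choosing those two cofinal sequences so that a relation definable from $a$, $b$, $c$ alone, as sets of ordinals, codes $x$, and then checking that this decoding is absolute to $L_{\omega_2}[a,b,c]$ --- is exactly what is missing. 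Relatedly, ``the interleaving function is $\Delta_1$ in those sets'' is asserted of an object you never define.
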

\begin{corollary}\label{cor_ToVelick}
If $W$ is a transitive $ZF^-$ model with $\omega_2 \subseteq W$ and $\mathbb{R} - W \ne \emptyset$, then $[\omega_2]^\omega - W$ is projective stationary.
\end{corollary}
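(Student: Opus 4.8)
The plan is to verify the definition of projective stationarity directly, using Theorem~\ref{thm_Velick} essentially as a black box. By hypothesis we may fix a real $x \in \mathbb{R} \setminus W$. Recall that to show $[\omega_2]^\omega - W$ is projective stationary it suffices to show: for every stationary $T \subseteq \omega_1$ and every club $C \subseteq [\omega_2]^\omega$ there is some $Y \in C$ with $Y \notin W$ and $Y \cap \omega_1 \in T$. So fix such a $T$ and $C$.

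First I would apply Theorem~\ref{thm_Velick} to $C$, $x$, and $T$, obtaining $a,b,c \in C$ with $a \cap \omega_1 = b \cap \omega_1 = c \cap \omega_1 \in T$ and $x \in L_{\omega_2}[a,b,c]$. The main point is then a pigeonhole observation: at least one of $a,b,c$ fails to be an element of $W$. Indeed, if $a,b,c \in W$, then --- since $\omega_2 \subseteq W$, so that every $\alpha < \omega_2$ belongs to $W$, and since relative constructibility from set parameters is absolute between transitive $ZF^-$ models containing those parameters --- each $L_\alpha[a,b,c]$ for $\alpha < \omega_2$ is an element of $W$, computed the same way in $W$ as in $V$; note that for $\alpha < \omega_2$ this set has size at most $\omega_1$, so no instance of the Power Set axiom is needed to form it. Consequently $L_{\omega_2}[a,b,c] = \bigcup_{\alpha < \omega_2} L_\alpha[a,b,c] \subseteq W$, whence $x \in W$, contradicting the choice of $x$. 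So one of $a,b,c$, call it $Y$, satisfies $Y \in C$, $Y \cap \omega_1 \in T$, and $Y \notin W$; since $T$ and $C$ were arbitrary, this completes the argument.

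The only step requiring any care --- and it is a minor one --- is the absoluteness bookkeeping just used: one must confirm that $L_{\omega_2}[a,b,c]$, as computed in $V$, is contained in $W$ whenever $a,b,c \in W$. This is handled by observing that it is enough to capture each initial segment $L_\alpha[a,b,c]$ (for $\alpha < \omega_2$) inside $W$ and then take the union over $\alpha < \omega_2$; in particular one never needs $\omega_2^V$ itself to be an element of $W$, and the relative constructibility hierarchy is absolute for transitive models of a weak fragment of $ZF$. With that dealt with, the corollary is an immediate consequence of Velickovic's theorem, and there is no substantial further obstacle.
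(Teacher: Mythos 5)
Your proof is correct and uses essentially the same approach as the paper: invoke Velickovic's theorem (Theorem~\ref{thm_Velick}) to produce $a,b,c\in C$ with the prescribed intersection with $\omega_1$ and $x\in L_{\omega_2}[a,b,c]$, then observe via absoluteness of the relative constructibility hierarchy (using $\omega_2\subseteq W\models ZF^-$) that not all of $a,b,c$ can lie in $W$. The only difference is cosmetic: you fix a witness $x\notin W$ at the outset and extract the required $Y$ directly, whereas the paper runs the contradiction by letting $x$ range over all reals.
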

\begin{proof}
Let $T \subset \omega_1$ be stationary; we need to show that $\{ d \in [\omega_2]^\omega \ | \ d \notin W  \text{ and } d \cap \omega_1 \in T \}$ is stationary in $[\omega_2]^\omega$.  Suppose not; then there is a club $C \subset [\omega_2]^\omega$ such that $C \searrow T := \{ d \in C \ | \ d \cap \omega_1 \in T  \} \subset W$.  Let $x \in \mathbb{R}$ be arbitrary and let $a,b,c \in C$ be as in Theorem \ref{thm_Velick}, so that $x \in L_{\omega_2}[a,b,c]$ and $a \cap \omega_1 = b \cap \omega_1 = c \cap \omega_1 \in T$; so $a,b,c \in C \searrow T \subset W$.  Since $\omega_2 \subseteq W \models ZF^{-}$ and $a,b,c \in W$ then $L_{\omega_2}[a,b,c] \subseteq W$.  So $x \in W$; since $x$ was arbitrary we've shown $\mathbb{R} \subset W$, contrary to the assumptions.
\end{proof}

\subsection{\texorpdfstring{Isomorphism Theorems for $GIC_{\omega_1}$ and $GIS_{\omega_1}$}{Isomorphism Theorems for GIComega1 and GISomega1}}\label{subsec_IsomThms}

We use the Isomorphism Theorems from Viale~\cite{Viale_GuessingModel}.  For transitive $ZF^-$ models $M$ and $M'$, we say that \emph{$M$ is a hereditary initial segment of $M'$} iff $M= M'$ or there is some $\lambda \in Card^{M'}$ such that $M = (H_{\lambda})^{M'}$.  For possibly non-transitive $M$ and $M'$, we say $M$ is a hereditary initial segment of $M'$ iff this holds for their transitive collapses.    
\begin{theorem} \label{thm_IsomIS}
(Viale) Assume $2^\omega = \omega_2$.  Let $\theta$ be a regular uncountable cardinal $\ge \omega_2$ and $\Delta$ a wellorder on $H_\theta$.  Suppose $M$ and $M'$ are submodels of $(H_\theta, \in, \Delta)$ such that $M \cap \omega_2 = M' \cap \omega_2$.

\begin{itemize}
 \item If $M$ and $M'$ are $GIC_{\omega_1}$, then one is a hereditary initial segment of the other.
 \item If $M$ and $M'$ are $GIS_{\omega_1}$, $T_M \subset \omega_1$ and $T_{M'} \subset \omega_1$ witness that $M, M' \in IS_{\omega_1}$ (respectively), and $T_M \cap T_{M'}$ is stationary, then one of $M, M'$ is a hereditary initial segment of the other.
\end{itemize}
\end{theorem}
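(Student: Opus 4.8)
The plan is to pass to the transitive collapses $N := H_M$ and $N' := H_{M'}$. Since $M, M' \prec (H_\theta,\in,\Delta) \models ZFC$ with $|M| = |M'| = \omega_1$ and $\omega_1 \subseteq M \cap M'$, these are transitive models of (a generous fragment of) $ZFC^-$, of size $\aleph_1$, containing $\omega_1$, and each has the $\omega_1$-approximation property. Because the classes $G_{\omega_1}$, $IC_{\omega_1}$, $IS_{\omega_1}$ are isomorphism-invariant, it suffices to show that one of $N,N'$ is a hereditary initial segment of the other. Write $\delta_N := N \cap ORD$, $\delta_{N'} := N' \cap ORD$, and $\gamma := \mathrm{ot}(M \cap \omega_2)$.

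The first step is to establish agreement below $H_{\omega_2}$: namely $M \cap H_{\omega_2} = M' \cap H_{\omega_2}$, hence (by an $\in$-induction, using transitivity of $H_{\omega_2}$) the collapses agree there, so that $(H_{\omega_2})^N = (H_{\omega_2})^{N'} =: P$ and $\omega_2^N = \omega_2^{N'} = \gamma$. For $\theta > \omega_2$ this is immediate from $2^\omega = \omega_2$: let $f : \omega_2 \to H_{\omega_2}$ be the $\Delta$-least bijection; then $f \in M \cap M'$ by elementarity in $(H_\theta,\Delta)$, so $M \cap H_{\omega_2} = f[M \cap \omega_2] = f[M' \cap \omega_2] = M' \cap H_{\omega_2}$. (The case $\theta = \omega_2$, where $M, M' \subseteq H_{\omega_2}$ and one in fact wants $N = N'$, will drop out of the next step.) In the $GIS_{\omega_1}$ case this argument is identical — it uses only $M \cap \omega_2 = M' \cap \omega_2$; the stationarity of $T_M \cap T_{M'}$ is not yet needed.

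The core step is to propagate this agreement upward: assuming WLOG $\delta_N \le \delta_{N'}$, I would show by induction on $\lambda$ that $\wp(\lambda) \cap N = \wp(\lambda) \cap N'$ for every ordinal $\lambda < \delta_N$. Since $N$ and $N'$ satisfy $ZFC^-$, each is determined by its bounded subsets of ordinals (code a set, via a wellorder and a Mostowski collapse, by a bounded subset of the model's ordinals), so this agreement yields $N = (H_{\delta_N})^{N'}$; as $\delta_N$ is closed under ordinal arithmetic, $(H_{\delta_N})^{N'} = (H_\lambda)^{N'}$ for a cardinal $\lambda$ of $N'$ (or $N = N'$ if $\delta_N = \delta_{N'}$), and translating back through $\sigma_M, \sigma_{M'}$ finishes the theorem. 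The induction is trivial at successors, and for $\lambda < \gamma$ it is free since $\wp(\lambda) \cap N \subseteq P = \wp(\lambda) \cap N'$. At a limit $\lambda$ of \emph{uncountable} cofinality it is also easy: given $A \in \wp(\lambda) \cap N$ and any countable $z \in N'$, the set $z \cap A$ is bounded below $\lambda$, so $z \cap A = z \cap (A \cap \beta)$ for some $\beta < \lambda$ with $A \cap \beta \in \wp(\beta) \cap N = \wp(\beta) \cap N'$ by induction, whence $z \cap A \in N'$; the $\omega_1$-approximation property of $N'$ then gives $A \in N'$, and symmetrically.

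The main obstacle is the induction step at limit ordinals $\lambda$ of \emph{countable} cofinality (including possibly $\gamma$ itself). There one can only write $z \cap A$ as a countable increasing union $\bigcup_n (z \cap A \cap \lambda_n)$ of sets already known to lie in $N'$, and to conclude $z \cap A \in N'$ one needs the \emph{sequence} of those sets to be in $N'$, which is not automatic. This is exactly where the internal-club structure of $M$ and $M'$ enters: one builds compatible continuous filtrations $M = \bigcup_{\xi<\omega_1} N_\xi$, $M' = \bigcup_{\xi<\omega_1} N'_\xi$ with $N_\xi \cap \omega_2 = N'_\xi \cap \omega_2$ (possible since $M \cap \omega_2 = M' \cap \omega_2$) and with $N_\xi \in M$, $N'_\xi \in M'$ for all $\xi$ in a club; at a common good stage $\xi$ one has $N_\xi \cap H_{\omega_2} = N'_\xi \cap H_{\omega_2}$ by the argument of the second paragraph applied one level down, and this internal information supplies the missing $\omega$-sequences inside the models. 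In the $GIS_{\omega_1}$ case the filtrations land in $M$ (resp. $M'$) only on the stationary sets $T_M$ (resp. $T_{M'}$), so the hypothesis that $T_M \cap T_{M'}$ is stationary is precisely what guarantees a stage $\xi$ good for both; the rest of the argument is unchanged.
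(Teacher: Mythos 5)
The paper does not prove this theorem itself; it is cited to Viale's paper \cite{Viale_GuessingModel}, so I can only evaluate your sketch on its own terms. The skeleton is sound: passing to the transitive collapses $N, N'$, using the $\Delta$-least bijection $\omega_2 \to H_{\omega_2}$ (the only place $2^\omega = \omega_2$ is used) to obtain $(H_{\omega_2})^N = (H_{\omega_2})^{N'}$, and running an induction to show $\wp(\lambda)\cap N = \wp(\lambda)\cap N'$ are all in order, and you correctly isolate the limit stages of countable cofinality as the only place where the $IC_{\omega_1}/IS_{\omega_1}$ hypothesis and the stationarity of $T_M \cap T_{M'}$ can possibly matter. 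But your resolution of that stage --- ``one builds compatible continuous filtrations \ldots\ and this internal information supplies the missing $\omega$-sequences'' --- is not an argument, and this is a genuine gap. Knowing $A\cap\beta\in N'$ for all $\beta<\lambda$ hands $N'$ the individual pieces of $A$ but no object of $N'$ from which to reassemble $z\cap A$ uniformly for a given countable $z\in N'$ cofinal in $\lambda$: the sequence $\langle A\cap\beta : \beta\in z\rangle$ is not visibly in $N'$ because $A$ is not, and merely having \emph{some} cofinal $\omega$-sequence in $N'$ (which already follows from $IU_{\omega_1}$) is of no help.

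What actually closes the gap is a further lemma about the filtrations, not just their existence. Writing $\langle P_\xi\rangle$, $\langle P'_\xi\rangle$ for the collapsed filtrations of $N,N'$, one must first show --- by its own induction on $\lambda$, with a diagonal-intersection argument at stages of cofinality $\omega_1$ --- that $P_\xi\cap\lambda = P'_\xi\cap\lambda$ for club-many $\xi$; the base case $\lambda\le\omega_2^N$ is exactly your $H_{\omega_2}$-agreement. Then, given $z\in N'$ countable and cofinal in $\lambda$ and $A\in N$, one picks $\xi$ in the relevant club with $z\subseteq P'_\xi$, $A\in P_\xi$, $P_\xi\in N$, and $P'_\xi\in N'$ (in the $GIS$ case this forces $\xi\in T_M\cap T_{M'}$, which is precisely where that hypothesis is used). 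Letting $h_\xi$ be the increasing enumeration of $P_\xi\cap\lambda = P'_\xi\cap\lambda$, the set $h_\xi^{-1}[A]$ is a bounded subset of $\omega_1$ lying in $N$, hence in $(H_{\omega_2})^N = (H_{\omega_2})^{N'}\subseteq N'$, while $h_\xi\in N'$ by the trace agreement; so $A\cap P'_\xi\cap\lambda = h_\xi[h_\xi^{-1}[A]]\in N'$ and therefore $z\cap A\in N'$. It is this coding of $A\cap P_\xi$ through a bounded subset of $\omega_1$, together with the trace-agreement lemma, that your sketch omits, and it is the technical heart of the theorem rather than a routine fill-in.
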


\subsection{Towers of measures and towers of ideals}\label{sec_TowersMeasuresIdeals}

Suppose $Z$ is a set and $F \subset \wp(Z)$ is a filter.  The \emph{support of $F$ ($supp(F)$)} is the set $\bigcup Z$.  For all instances in this paper, the support of a filter will always be a transitive set (typically some $H_\theta$) and $Z$ will always be of the form $\wp_\kappa(H_\theta)$ for some regular $\kappa \le \theta$.  \emph{(Ultra) filter} will always mean a normal,\footnote{$F$ is normal iff for every regressive $g: Z \to V$ there is an $S \in F^+$ such that $g \upharpoonright S$ is constant.} countably complete, fine\footnote{i.e. for every $b \in supp(F)$ there is an $A \in F$ such that $b \in M$ for all $M \in A$.  Note if $F$ is fine then its support is equal to $\bigcup \bigcup F$.} (ultra) filter.  If $F$ is a filter then $\breve{F}$ denotes its dual ideal; similarly if $I$ is an ideal then $\breve{I}$ denotes its dual filter.  If $\Gamma$ is a class, we say that a filter $F$ \emph{concentrates on $\Gamma$} iff there is an $A \in F$ such that $A \subseteq \Gamma$; if $I$ is an ideal we say that $I$ \emph{concentrates on $\Gamma$} iff its dual filter concentrates on $\Gamma$.  A set $S \subset  \bigcup I$ is \emph{$I$-positive} (written $S \in I^+$) iff $S \notin I$.  
\begin{definition}\label{def_CanonicalProjection}
If $Z \subseteq Z'$, $I \subset \wp(Z)$ and $I \subseteq \wp(Z')$ are ideals, we say that \emph{$I$ is the canonical projection of $I'$ to $Z$} iff $I = \{ \{ M' \cap Z \ | \ M' \in A'   \}  \ | \ A' \in I'    \}$.
\end{definition}
Suppose $W$ is a transitive model of set theory and $U$ is a (possibly external) $W$-normal,\footnote{i.e. normal with respect to sequences from $W$} fine ultrafilter; say $U \subset \wp(Z)$ where $Z \in W$ (for example, $Z$ might be $\wp^W_\kappa(H^W_\lambda)$).  It is a standard fact that if $j_U: W \to_U ult(W,U) = ({}^{Z} W \cap W) /U$ is the ultrapower and $\bigcup Z$ is transitive, then:
\begin{equation}\label{eq_ExternalUltrapowerFacts}
\parbox{\MyBoxWidth \linewidth}{
\begin{itemize}
 \item $j_U " (\bigcup Z)$ is an element of $ult(W,U)$ and is represented by $[id_{\bigcup Z}]_U$;
 \item If the wellfounded part of $ult(W,U)$ has been transitivized, then $j \upharpoonright (\bigcup Z)$ is an element of $ult(W,U)$ and is represented by $[f]_U$ where $f(M):\simeq $ the inverse of the Mostowski collapse map of $M$. 
\end{itemize}  
}
\end{equation}
Of course if $U \in W$ then $ult(W,U)$ is wellfounded, but the comments above show that $\bigcup Z$ is always an element of the (transitivised) wellfounded part of $ult(W,U)$, even when $U$ is external to $W$.  One common example of this ``external'' case is generic ultrapowers.  Suppose $I \subset \wp(Z)$ is an ideal \footnote{recall we are assuming all ideals are normal, fine, and countably complete.}, and let $\mathbb{P}_I := (I^+, \subset)$.  If $G$ is $(V, \mathbb{P}_I)$-generic then $G$ is an ultrafilter on $\wp^V(Z)$ which is normal\footnote{By a density argument and the fact that $I$ was a normal ideal.} with respect to sequences from $V$.  In particular, (\ref{eq_ExternalUltrapowerFacts}) holds and $j_G \upharpoonright (\bigcup Z) \in ult(V,G)$.

Now consider generalizations of these notions to sequences of filters which cohere via the ``canonical projection'' relation in Definition \ref{def_CanonicalProjection}.
\begin{definition}\label{def_TowerOfExternalMeasures}
Let $W$ be a transitive model of set theory, and $\delta$ a regular cardinal in $W$.  Let $\langle Z_\lambda \ | \ \lambda < \delta \rangle \in W$ and for simplicity, assume each $\bigcup Z_\lambda \in V^W_\delta$ and each $\bigcup Z_\lambda$ is transitive, and $\bigcup_{\lambda < \delta} \bigcup Z_\lambda = V^W_\delta$.  Suppose $\langle U_\lambda \ | \ \lambda < \delta \rangle$ is a (possibly external to $W$) sequence of $W$-normal ultrafilters, where $U_\lambda \subset \wp(Z_\lambda)$ for each $\lambda < \delta$.  Also assume there is a fixed $\kappa < \delta$ such that each $U_\lambda$ has completeness $\kappa$.  We will call $\vec{U}$ a \emph{tower of $W$-normal measures} iff for every $\lambda \le \lambda' < \delta$:  $U_\lambda$ is the canonical projection of $U_{\lambda'}$ to $Z_\lambda$ (as in Definition \ref{def_CanonicalProjection}).
\end{definition}

If $\vec{U}$ is a tower of $W$-normal measures, then there is a commutative system of maps obtained by the various ultrapower maps $j_{U_\lambda}: W \to_{U_\lambda} ult(W,U_\lambda)$ and for $\lambda \le \lambda'$, maps $k_{\lambda, \lambda'}: ult(W,U_\lambda) \to ult(W,U_{\lambda'})$ given by $[f]_{U_\lambda} \mapsto [M' \mapsto f(M' \cap \bigcup Z_\lambda)]_{U_{\lambda'}}$.  The direct limit map of the system is denoted $j_{\vec{U}}: W \to_{\vec{U}} N_{\vec{U}}$.  If $\vec{U} \in W$ then this direct limit will always be wellfounded and closed under $< \delta$ sequences from $W$; so if in addition $j_{\vec{U}}(\kappa) = \delta$ then $j_{\vec{U}}$ can witness the almost-hugeness of $cr(j_{\vec{U}})$ in $W$.\footnote{See Theorem 24.11 of Kanamori~\cite{MR1994835} for technical criteria on $\vec{U}$ which will guarantee that $j_{\vec{U}}$ is an almost huge embedding.}  

A (possibly external) direct limit embedding $j_{\vec{U}}: W \to_{\vec{U}} N_{\vec{U}}$ can also be viewed as an ultrapower embedding as follows.  Given a (partial) function $f: V^W_\delta \to W$ with $f \in W$, let $supp(f)$ denote the least cardinal $\lambda \le \delta$ such that $f(x)$ only depends on $x \cap H_\lambda$.  Let $B^W_{< \delta} := \{  f \in W \ | \ f: V_\delta^W \to W \text{ and } \ supp(f) < \delta \}$.  Define an equivalence relation $\simeq_{\vec{U}}$ on $B^W_{<\delta}$ by:  $f \simeq_{\vec{U}} g$ iff $\{  M \in Z_\lambda \ | \ f(M \cap H_\lambda) = g(M \cap H_\lambda) \} \in U_\lambda$ for all sufficiently large $\lambda < \delta$.  Define a relation $\in_{\vec{U}}$ on $B^W_{<\delta}/\simeq_{\vec{U}}$ in the obvious way (this will be well-defined).  Then the direct limit $(N_{\vec{U}}, E_{\vec{U}})$ will be isomorphic to $(B^W_{<\delta}/\simeq_{\vec{U}}, \in_{\vec{U}})$; for this reason we will write $ult(W, \vec{U})$ for this direct limit.  the Los Theorem will hold in the following form:  for each $f_0, ..., f_n$ in $B^W_{<\delta}$ and each formula $\phi$:  $N_{\vec{U}} \models \phi([f_0]_{\vec{U}}, ..., [f_n]_{\vec{U}})$ iff $ \{ M \in Z_\lambda \ | \ W \models \phi(f_0(M), ..., f_n(M))  \} \in U_\lambda$ for every sufficiently large $\lambda < \delta$.  The following analogues of (\ref{eq_ExternalUltrapowerFacts}) always hold when taking (possibly external) ultrapowers by a tower of $W$-normal measures:
\begin{equation}\label{eq_AlwaysHoldExternalTowerUltrapower}
\parbox{\MyBoxWidth \linewidth}{
For every $X \in V^W_\delta$:
\begin{itemize}
 \item $j_{\vec{U}} " X$ is an element of $ult(W,\vec{U})$ and is represented by the function $[M \mapsto M \cap X]_{\vec{U}}$
 \item $j_{\vec{U}} \upharpoonright X$ is an element of $ult(W, \vec{U})$ and is represented by $M \mapsto$ the inverse of the Mostowski collapse map of $M \cap X$.
\end{itemize}
}
\end{equation}  

Just as forcing with the positive sets of an ideal gives rise to external ultrapowers of $V$ by a single $V$-normal measure, forcing with a \emph{tower of ideals} (defined below) gives rise to an external ultrapower of $V$ by a tower of $V$-normal measures. 

\begin{definition}\label{def_TowerIdeals}
A sequence $\langle I_\lambda \ | \ \lambda < \delta \rangle$ is called a \emph{tower of ideals of height $\delta$} iff for every $\lambda \le \lambda' < \delta$, $I_\lambda$ is the canonical projection (in the sense of Definition \ref{def_CanonicalProjection}) of $I_{\lambda'}$ to $Z_\lambda$.

We will also require for simplicity that for each $\lambda$, if $Z_\lambda$ is such that $I_\lambda \subset \wp(Z_\lambda)$, then $\bigcup Z_\lambda = H_\lambda$.  In this paper $Z_\lambda$ will always be of the form $\wp_\kappa(H_\lambda)$.   

For a class $\Gamma$, we say that $\vec{I}$ \emph{concentrates on $\Gamma$} iff every ideal in the sequence concentrates on $\Gamma$.
\end{definition}

If $\vec{I}$ is a tower, there is a natural poset $\mathbb{P}_{\vec{I}}$ associated with $\vec{I}$.  Conditions are pairs $(\lambda, S)$ where $\alpha < \delta$ and $S \in I_\lambda^+$.  A condition $(\lambda, S)$ is strengthened by increasing $\lambda$ to some $\lambda'$ and refining the lifting of $S$ to $H_{\lambda'}$.  More precisely:  $(\lambda', S') \le (\lambda, S)$ iff $\lambda' \ge \lambda$ and $S' \subseteq S^{Z_{\lambda'}}:= \{ M' \in Z_{\lambda'} \ | \ M' \cap H_\lambda \in S  \} $.  If $G$ is generic for $\mathbb{P}_{\vec{I}}$, then let $proj(G,\lambda):=\{ S \in \wp^V(Z_\lambda) \ | \ (\lambda,S) \in G   \}$; this is an ultrafilter on $\wp^V(Z_\lambda)$ which is normal with respect to sequences from $V$ (though $proj(G,\lambda)$ need \emph{not} be $(V, \mathbb{P}_{I_\lambda})$-generic!) and $\langle proj(G,\lambda)\ | \ \lambda < \delta \rangle$ is a tower of $V$-normal measures as in Definition \ref{def_TowerOfExternalMeasures}; in particular (\ref{eq_AlwaysHoldExternalTowerUltrapower}) holds and one can prove the following general facts (in the case of towers, we use the notation $j_G: V \to_G ult(V,G)$ to denote the ultrapower embedding $j_{\vec{U}}: V \to_{\vec{U}} ult(V, \vec{U})$ where $\vec{U} = \langle proj(G,\lambda) \ | \ \lambda < \delta \rangle$):

\begin{fact}\label{fact_TowerFacts}
If $\vec{I}$ is a tower of height $\delta$ where $\delta$ is inaccessible and $G$ is generic for $\vec{I}$, then:
\begin{enumerate}
 \item\label{item_MapRestrictInUlt} For every $D \in V_\delta$, $j_G \upharpoonright D \in ult(V,G)$
 \item For every $\theta \in U$:  $proj(G, \theta) = \{ S \in \wp^V(Z_\theta) \ | \ j_G `` H_\theta \in j_G (S)  \}$.  This fact, combined with item \ref{item_MapRestrictInUlt} and the assumption that $\delta$ is (strongly) inaccessible, implies that $proj(G, \theta) \in ult(V,G)$ for every $\theta < \delta$. 
 \item For every $\theta < \delta$ and every $Y \in V_\delta$:  the relations $=_{proj(G,\theta)} \upharpoonright ({}^{H_\theta} Y )^V$ and $\in_{proj(G,\theta)} \upharpoonright ({}^{H_\theta} Y )^V$ are elements of $ult(V,G)$. (this follows from the previous bullets:  both $({}^{H_\theta} Y)^V$ and $proj(G,\theta)$ are elements of $ult(V,G)$).
 \item\label{item_k_maps} If $\vec{I}$ concentrates on $\{ M \ | \ \lambda \subset M  \text{ and } M \cap \lambda^+ \in \lambda^+ \}$ and $j_{proj(G,\theta)}: V \to ult(V,proj(G,\theta))$ is the ultrapower map by $proj(G,\theta)$, then $k_{proj(G,\theta),proj(G,\theta')} \upharpoonright j_{proj(G,\theta)}(\lambda^+) = id$.
\end{enumerate}
\end{fact}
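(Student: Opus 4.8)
The plan is to get parts (1)--(3) directly from the displayed facts (\ref{eq_AlwaysHoldExternalTowerUltrapower}) and the inaccessibility of $\delta$ (using only the coherence of the tower for (2)), and to reserve the concentration hypothesis for (4), which is the only part that needs real work. For (1) and (3) the common device is: given $X \in V_\delta$, fix a regular $\mu < \delta$ with $X \in H^V_\mu$ --- possible since $\delta$ is inaccessible, so $V_\delta = \bigcup_{\mu < \delta} H^V_\mu$ with each $H^V_\mu \in V_\delta$. By (\ref{eq_AlwaysHoldExternalTowerUltrapower}) the function $j_G \upharpoonright H^V_\mu$ lies in $ult(V,G)$; as a function with domain $H^V_\mu$ it witnesses that $H^V_\mu$ and all its elements --- in particular $X$ --- lie in $ult(V,G)$, and $x \mapsto j_G(x)$ on $H^V_\mu$ is definable in $ult(V,G)$ from the parameter $j_G \upharpoonright H^V_\mu$. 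Part (1) is then immediate: for $D \in V_\delta$ take such a $\mu$ with $D \in H^V_\mu$, so $j_G \upharpoonright D = (j_G \upharpoonright H^V_\mu) \upharpoonright D$ is a restriction, performed inside $ult(V,G)$, of a member of $ult(V,G)$ to a member of $ult(V,G)$. For (3): $({}^{H_\theta}Y)^V$ lies in $H^V_\mu$ for suitable $\mu < \delta$, hence in $ult(V,G)$ by the device above, while $proj(G,\theta) \in ult(V,G)$ by (2); and since for $f,g \in ({}^{H_\theta}Y)^V$ the sets $\{M \in Z_\theta : f(M) = g(M)\}$ and $\{M \in Z_\theta : f(M) \in g(M)\}$ lie in $V_\delta$ and are computed correctly in $ult(V,G)$, the relations $=_{proj(G,\theta)} \upharpoonright ({}^{H_\theta}Y)^V$ and $\in_{proj(G,\theta)} \upharpoonright ({}^{H_\theta}Y)^V$ are definable in $ult(V,G)$ from parameters there and are subsets of a member of $ult(V,G)$, hence members of $ult(V,G)$.

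For (2) I would run the standard computation exhibiting $proj(G,\theta)$ as the normal fine ultrafilter derived from the seed $j_G " H_\theta$. By the {\L}o\'{s} theorem for the tower ultrapower, and since by (\ref{eq_AlwaysHoldExternalTowerUltrapower}) $j_G " H_\theta$ is represented by $M \mapsto M \cap H_\theta$, for $S \in \wp^V(Z_\theta)$ one has $j_G " H_\theta \in j_G(S)$ iff the canonical lift $S^{Z_\lambda} = \{M \in Z_\lambda : M \cap H_\theta \in S\}$ lies in $proj(G,\lambda)$ for all sufficiently large $\lambda < \delta$, i.e.\ iff $(\lambda, S^{Z_\lambda}) \in G$ for all sufficiently large $\lambda$. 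A density argument then closes the loop: because $I_\theta$ is the canonical projection of $I_\lambda$ (Definition \ref{def_CanonicalProjection}), the lift $R^{Z_\lambda}$ is $I_\lambda$-positive whenever $R$ is $I_\theta$-positive, so for each $\lambda \ge \theta$ the conditions of height $\ge \lambda$ are dense below $(\theta,S)$; following such a condition in $G$ and weakening gives $(\theta,S) \in G \iff (\lambda, S^{Z_\lambda}) \in G$ for every $\lambda \in [\theta,\delta)$, and applying this to $Z_\theta \setminus S$ disposes of the case $(\theta,S) \notin G$. The ``in particular'' clause then follows from (1): $\wp^V(Z_\theta) \in H^V_\mu$ for suitable $\mu < \delta$, so $\wp^V(Z_\theta)$, $j_G \upharpoonright H^V_\mu$ and $j_G " H_\theta$ all lie in $ult(V,G)$, which can therefore form $\{S \in \wp^V(Z_\theta) : j_G " H_\theta \in_{ult(V,G)} j_G(S)\} = proj(G,\theta)$ by Separation.

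For (4) --- the part I expect to be the main obstacle --- write $U_\theta := proj(G,\theta)$, $U_{\theta'} := proj(G,\theta')$ and $\pi : Z_{\theta'} \to Z_\theta$, $\pi(M') = M' \cap H_\theta$; then $U_\theta$ is the $\pi$-image of $U_{\theta'}$ (this is the canonical-projection relation of Definition \ref{def_TowerIdeals} read off for the $proj(G,\cdot)$'s), and $k_{proj(G,\theta),proj(G,\theta')}$ is the induced factor embedding $[f]_{U_\theta} \mapsto [f \circ \pi]_{U_{\theta'}}$. Since $\lambda \subseteq M$ with $|M| < \kappa$ forces $\lambda < \kappa$, hence $\lambda^+ \le \kappa$, and the claim is immediate from $\kappa$-completeness when $\lambda^+ < \kappa$, the work is in the case $\lambda^+ = \kappa$. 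There I would first show that the canonical function $M \mapsto M \cap \lambda^+$ represents the ordinal $\lambda^+$ in $ult(V,U_\theta)$: fineness of $U_\theta$ gives $[M \mapsto M \cap \lambda^+]_{U_\theta} \ge \sup j_{U_\theta} " \lambda^+ = \lambda^+$, while normality of $U_\theta$ (every a.e.\ regressive function is a.e.\ constant) together with concentration on $\{M : \lambda \subseteq M,\ M \cap \lambda^+ \in \lambda^+\}$ shows every ordinal-representing function below it is a.e.\ constant with value $< \lambda^+$; the same holds at level $\theta'$. More generally each $V$-ordinal $\gamma < j_{U_\theta}(\lambda^+)$ is represented in $ult(V,U_\theta)$ by a canonical order-type function $M \mapsto \mathrm{ot}(M \cap \gamma')$ for a suitable $V$-ordinal $\gamma' < \theta$, which visibly factors through $M \mapsto M \cap H_\theta$; since $(M' \cap H_\theta) \cap \gamma' = M' \cap \gamma'$, the factor map fixes every such ordinal, so in particular $k_{proj(G,\theta),proj(G,\theta')}(\lambda^+) = \lambda^+$.

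The crux --- and the genuinely delicate step --- is to show that this accounts for everything below $j_{U_\theta}(\lambda^+)$: that $U_\theta$ contributes no ``extra'' ordinals below $j_{U_\theta}(\lambda^+)$ beyond those just described, equivalently that the ordinals of $ult(V,U_\theta)$ below $j_{U_\theta}(\lambda^+)$ are exactly $j_{U_\theta}(\lambda^+)$-many $V$-ordinals, arranged identically in $ult(V,U_{\theta'})$. An ordinal below $j_{U_\theta}(\lambda^+)$ is represented by some $f : Z_\theta \to \lambda^+$, and one wants to replace $f$ (on the concentration set, $U_\theta$-a.e.) by a function depending only on $M \cap \lambda^+$ --- hence $\pi$-invariant and fixed by the factor map. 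The concentration hypothesis is what should make this work: on the concentration set $M \cap \lambda^+$ is an honest ordinal of size exactly $\lambda$ (as $\lambda \subseteq M$ and $M \cap \lambda^+ < \lambda^+$), one partitions the concentration set according to this ordinal and applies normality piecewise, and then checks absoluteness of the replacement under $\pi$ at level $\theta'$. Carrying out that bookkeeping --- and, along the way, handling the possible ill-foundedness of $ult(V,proj(G,\theta))$, since $proj(G,\theta)$ need not be $(V,\mathbb{P}_{I_\theta})$-generic --- is where I expect the real difficulty to be concentrated; everything else is routine.
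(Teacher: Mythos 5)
The paper presents this as an unproved ``Fact'' --- it says just before the statement that ``one can prove the following general facts'' and gives no argument --- so there is no in-text proof to compare against, and you are being judged on whether your blind argument actually establishes each item.

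Parts (1)--(3) are fine and essentially forced: (1) is literally the second bullet of (\ref{eq_AlwaysHoldExternalTowerUltrapower}); (2) is the standard seed computation via {\L}o\'{s} together with the density argument showing $(\theta,S)\in G \Leftrightarrow (\lambda, S^{Z_\lambda})\in G$, and the ``in particular'' clause is Separation inside $ult(V,G)$ using (1) and the fact that $\wp^V(Z_\theta)\in V_\delta$ by inaccessibility; (3) is the same kind of Separation argument once one notes that $({}^{H_\theta}Y)^V$ and the assignment $(f,g)\mapsto\{M : f(M)=g(M)\}$ both lie in $V_\delta$ and hence (via the domain of $j_G\upharpoonright H^V_\mu$) in $ult(V,G)$. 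These track what the paper clearly has in mind.

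Part (4) is where I see a genuine gap, and you yourself flag it. Two specific problems. First, the assertion that ``each $V$-ordinal $\gamma < j_{U_\theta}(\lambda^+)$ is represented in $ult(V,U_\theta)$ by a canonical order-type function $M\mapsto\mathrm{ot}(M\cap\gamma')$ for a suitable $\gamma'<\theta$'' is not justified and is stronger than what the usual canonical-function computation gives: that computation shows $[M\mapsto\mathrm{ot}(M\cap\gamma')]_{U_\theta}=\gamma'$ for every $\gamma'<\theta$, which establishes that $j_{U_\theta}(\lambda^+)\geq\theta$, but it does \emph{not} show that every ordinal of $ult(V,U_\theta)$ below $j_{U_\theta}(\lambda^+)$ has this form --- $proj(G,\theta)$ is only $V$-normal, not $(V,\mathbb{P}_{I_\theta})$-generic, so there is no a priori reason that functions $f:Z_\theta\to\lambda^+$ can be replaced mod $U_\theta$ by canonical ones, and in particular there may be ordinals in $[\theta, j_{U_\theta}(\lambda^+))$ that are not of that shape. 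Second, the sketched mechanism --- ``partition according to $M\cap\lambda^+$ and apply normality piecewise'' --- correctly handles the regressive case (if a $V$-function $g:Z_{\theta'}\to\lambda^+$ satisfies $g(M')< M'\cap\lambda^+$ a.e., then composing with bijections $e_\alpha:\lambda\to\alpha$ shows $g$ is a.e.\ a function of $M'\cap\lambda^+$, hence $\pi$-invariant), but it says nothing about the complementary case $g(M')\geq M'\cap\lambda^+$ a.e., i.e.\ $g(M')\notin M'$. That is exactly the case in which $\pi$-invariance of $g$ mod $U_{\theta'}$ is at issue, and normality (which needs $g(M')\in M'$) gives you nothing there. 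What is actually required to make ``$k\upharpoonright j_{proj(G,\theta)}(\lambda^+)=\mathrm{id}$'' meaningful and true in the form the paper uses it (Lemma~\ref{lem_LessDelta}) is that $k_{\theta,\theta'}$ carries the wellfounded ordinals below $j_{U_\theta}(\lambda^+)$ surjectively onto those below $j_{U_{\theta'}}(\lambda^+)$; order-preservation alone (which is free from elementarity of $k$) does not give this, and the surjectivity is precisely the unaddressed point. So the proposal is correct and essentially complete on (1)--(3), but (4) remains a sketch with a real hole at the step you identify as the crux.
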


We refer the reader to Foreman~\cite{MattHandbook} for the general theory of towers, and to Larson~\cite{MR2069032} and Woodin~\cite{MR1713438} for the specific cases where all the ideals $I_\theta$ in the tower are of the form $NS \upharpoonright Z_\theta$ (towers of this form are called \emph{stationary towers}).

\subsection{Induced towers of ideals}\label{subsec_InducedTowers}

We adjust Example 3.30 from \cite{MattHandbook} to towers:
\begin{definition}\label{def_TowerDerivedFromName}
Suppose $\mathbb{Q}$ is a poset, $\delta$ is inaccessible, and $\langle \dot{U}_\lambda \ | \ \lambda < \delta \rangle$ is a sequence of $\mathbb{Q}$-names such that $\mathbb{Q} \Vdash$ ``$\vec{\dot{U}}$ is a tower of $V$-normal ultrafilters''.  For each $\lambda < \delta$, let $I_\lambda$ be the collection of $A$ such that for every $(V, \mathbb{Q})$-generic object $H$, $A \notin \dot{U}_H$.  The sequence $\langle I_\lambda \ | \ \lambda < \delta \rangle$ will be called the \emph{tower of ideals derived from the name $\vec{\dot{U}}$}.  
\end{definition}
It is straightforward to check that this indeed forms a tower of ideals.

Recall that if $j: V \to N$ is an embedding with critical point $\kappa$ and $\mathbb{P} \in V$ is a poset such that $j \upharpoonright \mathbb{P}: \mathbb{P} \to j(\mathbb{P})$ is a \emph{regular embedding},\footnote{i.e. whenever $A \subset \mathbb{P}$ is a maximal antichain then $j[A]$ is a maximal antichain in $j(\mathbb{P})$.} then $j(\mathbb{P})$ is forcing equivalent to $\mathbb{P} * j(\mathbb{P})/j[\dot{G}]$ where $\dot{G}$ is the canonical $\mathbb{P}$-name for the $\mathbb{P}$-generic.  Further, whenever $G*H$ is generic for $\mathbb{P} * j(\mathbb{P})/j[\dot{G}]$ then $j$ can be lifted (in $V[G][H]$) to an elementary $j^{G*H}: V[G] \to N[G][H]$.  Suppose $\delta$ is a $V$-cardinal such that for every $\lambda < \delta$, $j " H_\lambda \in N$.  Then for every $\lambda < \delta$:
\begin{equation}\label{eq_U_GstarH}
\parbox{\MyBoxWidth \linewidth}{
 $U^{G*H}_\lambda := \{ A \ | \ A \in V[G] \text{ and } j^{G*H} " H_\lambda[G] \in j^{G*H} (A)  \}$
}
\end{equation} 
is a $V[G]$-normal ultrafilter.  Then from the point of view of $V[G]$, the poset $j(\mathbb{P})/G$ forces that $\langle \dot{U}^{G*\dot{H}}_\lambda \ | \ \lambda < \delta \rangle$ is a tower of $V[G]$-normal measures (external to $V[G]$ of course).  Then in $V[G]$, let $\langle I_\lambda \ | \ \lambda < \delta \rangle$ be the tower of normal ideals derived from the name $\langle \dot{U}^{G*\dot{H}}_\lambda \ | \ \lambda < \delta \rangle$ as in Definition  \ref{def_TowerDerivedFromName} (here $V[G]$ is playing the role of $V$ and $j(\mathbb{P})/G$ is playing the role of $\mathbb{Q}$ from Definition \ref{def_TowerDerivedFromName}).  

\begin{definition}\label{def_TowerInducedBy_j}
The tower $\vec{I} \in V[G]$ described in the last paragraph will be called the \emph{tower induced by $j$}.
\end{definition}
We caution that if $j_{\vec{U}}: V \to N_{\vec{U}}$ is an embedding by a tower of $V$-normal measures, $j_{\vec{U}} \upharpoonright \mathbb{P}: \mathbb{P} \to j_{\vec{U}}(\mathbb{P})$ is a regular embedding, $G$ is $(V, \mathbb{P})$-generic, and $\vec{I} \in V[G]$ is the tower induced by $j_{\vec{U}}$ as in Definition \ref{def_TowerInducedBy_j}, then for each $\lambda < \delta$ it will \emph{NOT} in general be the case that the dual of $I_\lambda$ extends $U_\lambda$.  This is because of the way that the measure $U^{G*H}_\lambda$ is defined in (\ref{eq_U_GstarH}):  the measure $U_{G*H}$ concentrates on elementary substructures of $H_\lambda[G]$, \emph{NOT} on elementary substructures of $H_\lambda$.  This is only a minor technical issue, however; generally $N_{\vec{U}} \cap j^{G*H}_{\vec{U}} " H_\lambda[G] = j_{\vec{U}} " H_\lambda$, and it follows that for every $\lambda < \delta$ there are $U^{G*H}_\lambda$-many $M \prec H_\lambda[G]$ such that $M \cap V \in V$ (see Corollary \ref{cor_CreatingTowers} for the use of derived towers in this setting).       

\section{The weak Chang property and ideals which bound their completeness}\label{sec_TowersAndWeakChang}

In this section we discuss presaturation of towers and some concepts introduced by the first author in \cite{Cox_MALP} which will be used in the proofs of Theorems \ref{thm_RP_Ideal} and \ref{thm_SRP_TP_GIS}.  These concepts are related to Chang's Conjecture, bounding by canonical functions, and saturation.  For the reader's convenience all relevant proofs are included here.

A tower of height $\delta$ is called \emph{presaturated} iff $\delta$ always remains a regular cardinal in generic extensions by the tower.  Such a tower is always precipitous and $ult(V,G)$ is closed under $< \delta$ sequences from $V[G]$ (see Proposition 9.2 of \cite{MattHandbook}).  Woodin showed that if $\delta$ is a Woodin cardinal, there are several stationary towers of height $\delta$ which are presaturated.  We use the following weakening of presaturation introduced in \cite{Cox_MALP}:

\begin{definition}\label{def_weakChangProperty}
(Cox~\cite{Cox_MALP}) A tower of inaccessible height $\delta$ has the \emph{weak Chang property} iff whenever $G$ is generic for the tower, then $\delta$ is an element of the wellfounded part of $ult(V,G)$ and is regular in $ult(V,G)$ (though not necessarily in $V[G]$).
\end{definition}
\begin{lemma}
Let $\mu = \lambda^+$.  If a tower $\vec{I}$ of height $\delta$ concentrates on $\Gamma:=\{ M \ | \ |M| = \lambda \subset M   \}$, then $\vec{I}$ has the weak Chang property iff it forces that $j_G(\mu) = \delta$.
\end{lemma}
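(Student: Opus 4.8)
The plan is to unwind the definitions on both sides and connect them through the standard facts about towers recorded in Fact~\ref{fact_TowerFacts} and equation~(\ref{eq_AlwaysHoldExternalTowerUltrapower}), together with the hypothesis that $\vec{I}$ concentrates on $\Gamma = \{M \mid |M| = \lambda \subseteq M\}$. Fix a generic $G$ for $\mathbb{P}_{\vec{I}}$ and let $j_G\colon V \to_G ult(V,G)$ be the induced tower ultrapower. The key object is $\delta$ as seen inside $ult(V,G)$: by definition the weak Chang property asserts exactly that $\delta$ lies in the wellfounded part of $ult(V,G)$ and is regular there. Since $\delta$ is already assumed inaccessible in $V$, standard facts (e.g.\ Fact~\ref{fact_TowerFacts}\eqref{item_MapRestrictInUlt} and the inaccessibility assumptions built into Definition~\ref{def_TowerIdeals}) guarantee that $\delta$ is always in the wellfounded part; so the content of the weak Chang property reduces to: $\delta$ is regular in $ult(V,G)$.

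Next I would compute $j_G(\mu)$ where $\mu = \lambda^+$. The forward direction is the easy one: assume $\vec{I}$ forces $j_G(\mu) = \delta$. Since $\mu$ is a successor cardinal in $V$, by elementarity $j_G(\mu) = j_G(\lambda^+)$ is the successor of $j_G(\lambda)$ in $ult(V,G)$, hence in particular a regular cardinal in $ult(V,G)$. So $\delta = j_G(\mu)$ is regular in $ult(V,G)$, and (using that $\delta$ is in the wellfounded part) the weak Chang property holds. For the reverse direction, assume the weak Chang property, i.e.\ $\delta$ is regular in $ult(V,G)$. Here I would use that $\vec{I}$ concentrates on $\Gamma$: by (\ref{eq_AlwaysHoldExternalTowerUltrapower}) applied for each $\theta < \delta$, $j_G " H_\theta \in ult(V,G)$ and is represented by $[M \mapsto M \cap H_\theta]$; and since $U_\theta$-almost every $M$ has $|M| = \lambda$ and $\lambda \subseteq M$, the critical point of $j_G$ is $\mu = \lambda^+$ — that is, $j_G(\lambda^+) > \lambda^+$, while $j_G(\gamma) = \gamma$ for $\gamma \le \lambda$. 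The standard computation (cf.\ the treatment of stationary towers in Larson~\cite{MR2069032} or Foreman~\cite{MattHandbook}) shows that $j_G(\mu)$ is exactly the ordinal height of the set of equivalence classes $[f]_{\vec{U}}$ of functions $f\colon H_\lambda \to \mu$ (on a $\Gamma$-set, $f(M) < $ the ordertype of $M \cap \mu$, say), and that $j_G(\mu) \le \delta$ always, because each such function has support $< \delta$ and there are only boundedly-below-$\delta$-many relevant values. So it remains to see $j_G(\mu) \ge \delta$; but if $j_G(\mu) < \delta$ then, since $\vec{I}$ is a tower of height $\delta$ and $\mu$-many coordinates' worth of information is already captured below some $\lambda' < \delta$, one shows $\delta$ would be singularized in $ult(V,G)$ (it becomes a limit of a short, $ult(V,G)$-cofinal sequence of ordinals of the form $j_G(\eta)$, $\eta < \mu$... wait, more precisely: $\delta = \sup j_G " \delta$ but $j_G " \delta$ would have ordertype $< j_G(\mu) < \delta$ inside $ult$, contradicting regularity). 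Thus $j_G(\mu) = \delta$.

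I would organize the reverse direction as a contrapositive: suppose $j_G(\mu) \ne \delta$; I will show $\delta$ is not regular in $ult(V,G)$. Since $j_G(\mu) \le \delta$ always holds (every function with support $<\delta$ on a $\Gamma$-set into $\mu$ has range of order type $<\delta$, so its class is $<\delta$; and $\sup$ of these is $\le\delta$), the assumption gives $j_G(\mu) < \delta$. Now I use that $\delta = \sup_{\eta < \delta} j_G(\eta)$ — this uses inaccessibility of $\delta$ and that the identity function on $V_\delta$ is cofinal, i.e.\ $[\mathrm{id}]$-type arguments show $j_G " \mathrm{ORD} \cap \delta$ is cofinal in $\delta$. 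But I can do better: for cofinally many $\eta < \delta$ of the form $\eta = |H_{\lambda'}|^+$ one has $j_G(\eta) \ge$ ... and then $\delta$ is the supremum, inside $ult(V,G)$, of a sequence indexed by the ordinal $j_G(\mu) < \delta$ (reindexing via a surjection $\mu \to \eta$ for appropriate $\eta$, pushed forward). This exhibits a cofinal map into $\delta$ of domain an ordinal $< \delta$, all inside $ult(V,G)$, so $\delta$ is singular (indeed of cofinality $\le j_G(\mu)$) in $ult(V,G)$, contradicting the weak Chang property.

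\emph{Main obstacle.} The delicate point is the ``$j_G(\mu) \le \delta$ always'' and the cofinality computation showing $j_G(\mu) < \delta$ forces $\delta$ singular in $ult(V,G)$ — this is where the hypothesis that $\vec{I}$ concentrates on $\Gamma = \{M : |M| = \lambda \subseteq M\}$ is essential (it pins $\mathrm{cr}(j_G) = \mu$ and controls the order types of $M \cap \mu$, so that functions into $\mu$ on a $\Gamma$-set genuinely represent all ordinals below $j_G(\mu)$ and no more). One must be careful that the tower ultrapower, unlike an ordinary ultrapower, takes a direct limit, so "$[f]_{\vec U}$ ranges over $j_G(\mu)$" has to be phrased via the $\in_{\vec U}$-structure on $B^W_{<\delta}/\simeq_{\vec U}$ from the preliminaries; but the Łoś theorem stated there handles this routinely. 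Once those two inequalities are in hand, the biconditional drops out by combining with Fact~\ref{fact_TowerFacts} as above.
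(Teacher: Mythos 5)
Your forward direction is correct: if $j_G(\mu)=\delta$, then by Łoś and $\mathrm{cr}(j_G)=\mu$, $\delta = j_G(\lambda^+) = (\lambda^+)^{ult(V,G)}$ is a cardinal, hence regular, in $ult(V,G)$, and $\delta$ sits in the wellfounded part; this is essentially the paper's observation that $j_G(\mu)$ is the successor of $\lambda$ in $ult(V,G)$.

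The reverse direction, however, rests on a claim that is false, and in fact is the reverse of the inequality the proof actually needs. You assert that ``$j_G(\mu)\le\delta$ always holds,'' reasoning that a function into $\mu$ with support below $\delta$ ``has range of order type $<\delta$, so its class is $<\delta$.'' But the ordinal $[f]_{\vec U}$ represented by $f$ is computed via the $\in_{\vec U}$-relation on the direct limit, not by the order type of $\mathrm{ran}(f)$; there is no reason it should be bounded by $\delta$, and in general it is not. The fact that \emph{is} always true --- and is exactly what the paper invokes, citing Foreman's handbook --- is the opposite inequality $j_G(\mu)\ge\delta$: for every $\eta<\delta$, both $j_G"\eta$ and $j_G\upharpoonright\eta$ lie in $ult(V,G)$ by Fact~\ref{fact_TowerFacts}, and since $\vec I$ concentrates on $\Gamma$ the set $j_G"\eta$ has $ult(V,G)$-cardinality $\le\lambda$; so every ordinal below $\delta$ has $ult(V,G)$-cardinality $\le\lambda$, forcing $(\lambda^+)^{ult(V,G)} = j_G(\mu)\ge\delta$. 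Your reduction of the contrapositive to the case $j_G(\mu)<\delta$ therefore discards the only case that can actually occur when $j_G(\mu)\ne\delta$. Relatedly, the equation $\delta=\sup j_G"\delta$ that you lean on is also false for these towers: since $\mu<\delta$ and $j_G(\mu)\ge\delta$, one already has $\sup j_G"\delta\ge j_G(\mu+1)>\delta$.

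With the correct inequality in hand the reverse direction is short and does not need any cofinality computation. If the weak Chang property holds and yet $j_G(\mu)\ne\delta$, then by $j_G(\mu)\ge\delta$ we must have $j_G(\mu)>\delta$. But then $\lambda = j_G(\lambda) < \delta < j_G(\mu) = (\lambda^+)^{ult(V,G)}$, so $\delta$ lies strictly between $\lambda$ and its $ult(V,G)$-successor and thus is not a cardinal in $ult(V,G)$ --- in particular not regular --- contradicting the weak Chang property. That is the content of the paper's one-line proof, and it is where the hypothesis that $\vec I$ concentrates on $\Gamma$ does its real work (it fixes $\mathrm{cr}(j_G)=\mu$ and gives $|\eta|^{ult(V,G)}\le\lambda$ for all $\eta<\delta$), rather than in the bound $j_G(\mu)\le\delta$ you proposed.
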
 
\begin{proof}
The fact that $\vec{I}$ concentrates on $\Gamma$ implies that $\mu$ will be the critical point of $j_G$ and $j_G(\mu) \supseteq \delta$ for any generic $G$ (see \cite{MattHandbook}).  Since $j_G(\mu)$ is the successor of $\lambda$ in $ult(V,G)$, the equivalence follows easily.   
\end{proof}

\begin{corollary}\label{cor_PresatImpliesWeakChang}
Let $\mu = \lambda^+$ and assume $\vec{I}$ is a tower of height $\delta$ which concentrates on $\Gamma:=\{ M \ | \ |M| = \lambda \subset M   \}$.  If $\vec{I}$ is presaturated then it satisfies the weak Chang Property.
\end{corollary}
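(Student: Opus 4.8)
The plan is to reduce the statement, via the Lemma immediately preceding it, to the assertion that a presaturated tower $\vec I$ concentrating on $\Gamma := \{ M \mid |M| = \lambda \subset M \}$ forces $j_G(\mu) = \delta$, where $\mu = \lambda^+$. As remarked in that Lemma (citing \cite{MattHandbook}), concentration on $\Gamma$ already guarantees that $\mu$ is the critical point of $j_G$ and that $j_G(\mu) \supseteq \delta$ for every $(V,\mathbb{P}_{\vec I})$-generic $G$; so the entire content of the Corollary is to rule out $j_G(\mu) > \delta$.

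First I would record the standard consequences of presaturation (Proposition 9.2 of \cite{MattHandbook}): a presaturated tower is precipitous, so $ult(V,G)$ is wellfounded and may be transitivized, and moreover $\delta$ remains a regular cardinal in $V[G]$. Once $ult(V,G)$ is transitivized it is a transitive class definable from $V$ and $G$, hence $ult(V,G)$-as-a-class lives in $V[G]$ and in particular every element of $ult(V,G)$ is an element of $V[G]$. (This last point is the only place where one must be a little careful, but it is routine.)

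Now fix a generic $G$ and suppose toward a contradiction that $j_G(\mu) > \delta$. Since $j_G(\mu) = (\lambda^+)^{ult(V,G)}$, the ordinal $\delta$ has cardinality at most $\lambda$ inside $ult(V,G)$, so there is, in $ult(V,G)$, a surjection $g \colon \lambda \to \delta$. By the previous paragraph $g \in V[G]$, exhibiting $\delta$ as the surjective image of $\lambda < \delta$ in $V[G]$ and contradicting the fact that $\delta$ is a cardinal in $V[G]$. Hence $j_G(\mu) = \delta$ in every generic extension by $\vec I$, and the Lemma yields the weak Chang Property. (Alternatively, one can bypass the Lemma altogether: $\delta$ lies in the wellfounded part of $ult(V,G)$ because the tower is precipitous, and if $\delta$ were singular in $ult(V,G)$ then a cofinal map from some $\eta < \delta$ into $\delta$ would be an element of $ult(V,G) \subseteq V[G]$, making $\delta$ singular in $V[G]$ and contradicting presaturation.) I do not anticipate a genuine obstacle here; the proof is short, and the only subtlety worth spelling out is why the transitivized $ult(V,G)$ is contained in $V[G]$.
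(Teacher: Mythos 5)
Your proof is correct and takes essentially the same approach the paper intends: reduce to showing $j_G(\mu)=\delta$ via the preceding Lemma, then use the facts (Proposition 9.2 of \cite{MattHandbook}) that a presaturated tower is precipitous (so $ult(V,G)$ is wellfounded and, after transitivizing, a subclass of $V[G]$) and preserves the regularity of $\delta$ in $V[G]$, so that $j_G(\mu)>\delta$ would yield a surjection $\lambda\to\delta$ in $V[G]$, contradicting presaturation. The one subtlety you flag---that the transitivized $ult(V,G)$ has all its elements in $V[G]$---is indeed routine and correctly handled.
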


For the next lemma we will use the following definition, which is also related to saturation properties of ideals (see \cite{Cox_MALP}):

\begin{definition}\label{def_IdealBoundsCompleteness}
(Cox~\cite{Cox_MALP}) Let $J$ be a normal ideal over $\wp(H)$ where $\mu = completeness(J) \subseteq H$.  We say \emph{$J$ bounds its completeness} iff for every $f: \mu \to \mu$:  there are $\breve{J}$-many $M$ such that $otp(M \cap ORD) > f(M \cap \mu)$.
\end{definition}

\begin{lemma}\label{lem_LessDelta}
Suppose $\vec{I} = \langle I_\theta \ | \ \theta \in U \subset \delta \rangle$ is a tower of inaccessible height $\delta$, has completeness $\mu:= \lambda^+$, concentrates on $\{ M \ | \ |M|=\lambda \subset M  \}$, and has the weak Chang property.  Then:
\begin{enumerate}
 \item\label{item_LessDelta} For every generic $G$ and every $\theta < \delta$: $j_{proj(G,\theta)}(\mu) < \delta$.
 \item\label{item_SomeIdealBounds} There is a restriction of some ideal in the tower which bounds its completeness.
\end{enumerate}
\end{lemma}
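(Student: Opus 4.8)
The plan is to derive both items from the fundamental facts about towers collected in Fact \ref{fact_TowerFacts}, together with the inaccessibility of $\delta$ and the weak Chang property. First I would prove item \eqref{item_LessDelta}. Fix a generic $G$ and $\theta < \delta$. By Fact \ref{fact_TowerFacts}, $proj(G,\theta) \in ult(V,G)$, and since $\vec{I}$ concentrates on $\{M \mid |M| = \lambda \subset M\}$, Fact \ref{fact_TowerFacts}\eqref{item_k_maps} tells us that the factor map $k_{proj(G,\theta),\vec{U}}\colon ult(V,proj(G,\theta)) \to ult(V,G)$ is the identity on $j_{proj(G,\theta)}(\mu)$. (Here $\vec{U} = \langle proj(G,\lambda) \mid \lambda < \delta\rangle$, and I use that the factor maps into the direct limit are identity on the relevant initial segment because the $k_{\theta,\theta'}$ maps are.) Hence $j_{proj(G,\theta)}(\mu)$ is computed correctly as an ordinal of $ult(V,G)$. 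Now $j_{proj(G,\theta)}(\mu) = (\lambda^+)^{ult(V,proj(G,\theta))}$, and since $|Z_\theta| < \delta$ (as $\delta$ is inaccessible and $Z_\theta = \wp_\mu(H_\theta) \in V_\delta$), the ultrapower $ult(V,proj(G,\theta))$ has size $< \delta$ when restricted to the relevant part; more to the point, $j_{proj(G,\theta)}(\mu) < (2^{|Z_\theta|})^+ < \delta$ by inaccessibility of $\delta$. So $j_{proj(G,\theta)}(\mu) < \delta$ as claimed. The weak Chang property is what guarantees $\delta = j_G(\mu)$ sits in the wellfounded part and is regular there, so the inequality $j_{proj(G,\theta)}(\mu) < \delta = j_G(\mu)$ is meaningful and consistent with elementarity of $k_{\theta,\delta}$.

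For item \eqref{item_SomeIdealBounds} I would argue by contradiction using item \eqref{item_LessDelta}. Suppose no restriction of any ideal in the tower bounds its completeness. Unwinding Definition \ref{def_IdealBoundsCompleteness}: for every $\theta < \delta$ and every $I_\theta$-positive $S$, there is a function $f_S\colon \mu \to \mu$ (in $V$) such that for $\breve{(I_\theta\upharpoonright S)}$-many $M$, $otp(M \cap ORD) \le f_S(M \cap \mu)$, i.e. $\{M \in S \mid otp(M \cap ORD) > f_S(M \cap \mu)\} \in I_\theta$. The key move is to reflect this into the generic ultrapower: given generic $G$, the ordinal $j_G(\mu) = \delta$ is, by the second bullet of \eqref{eq_AlwaysHoldExternalTowerUltrapower} and the representation of the transitive collapse inverse, realized as $[M \mapsto otp(M\cap ORD)]_{\vec{U}}$ — more precisely, $\delta = \sup$ of the ordinals represented by the $otp(M \cap \theta)$ functions as $\theta \to \delta$. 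By Łoś, for each fixed $\theta$ the value $[M \mapsto otp(M\cap ORD)]_{proj(G,\theta)}$ in $ult(V,proj(G,\theta))$ is bounded by $[f_S]_{proj(G,\theta)}$ for a suitable $S \in proj(G,\theta)$ (choosing $S$ with the non-bounding witness $f_S$), hence is $< j_{proj(G,\theta)}(\mu) < \delta$ by item \eqref{item_LessDelta}. Pushing forward along $k_{\theta,\delta}$ and using that $\delta$ is the critical point behavior is governed by the weak Chang property, one concludes $\delta$ is represented in $ult(V,G)$ by something that is "covered" on a measure-one set by canonical-function-like functions, contradicting that $j_G(\mu) = \delta$ is actually the successor of $\lambda$ in $ult(V,G)$ (in particular a limit of order types of countable-cofinality-in-$V$ approximations cannot be below all the $f_S$'s simultaneously). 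The cleanest packaging is: the "$\vec{I}$ does not bound its completeness at any restriction" hypothesis yields, by a normality/diagonal argument across the $\theta$'s, a single function dominating $M \mapsto otp(M\cap ORD)$ on a tower-positive set, forcing $j_G(\mu) < \delta$, contradicting the weak Chang property.

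The main obstacle, and the step I would spend the most care on, is the second one — converting "bounding completeness" (a statement about a single ideal over a single $\wp(H_\theta)$) into a statement about the direct limit $ult(V,G)$, because the functions $f_S$ depend on $\theta$ and on the positive set $S$, so one needs a genuine diagonalization/normality argument to obtain one function witnessing failure along the whole tower, and one must be careful that the $k_{\theta,\theta'}$ maps are identity on the relevant initial segments (which is exactly Fact \ref{fact_TowerFacts}\eqref{item_k_maps}, available because $\vec{I}$ concentrates on sets $M$ with $M \cap \lambda^+ \in \lambda^+$, a consequence of $|M| = \lambda \subset M$). I expect the write-up to invoke a result from \cite{Cox_MALP} that makes the "bounds completeness $\iff$ $j(\mu)$ large in the ultrapower" equivalence precise at the level of a single ideal, and then to transfer it through the tower using item \eqref{item_LessDelta} and the weak Chang property.
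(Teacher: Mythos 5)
Your argument for item \eqref{item_LessDelta} is essentially the paper's, though you run it as a direct cardinality count while the paper frames it as a contradiction with regularity: you both use Fact~\ref{fact_TowerFacts}\eqref{item_k_maps} to see that $j_{proj(G,\theta)}(\mu)$ is a genuine ordinal (identified, via the factor maps, with its image below $j_G(\mu)=\delta$ in $ult(V,G)$), and then observe that this ordinal is the order type of at most $|({}^{Z_\theta}\mu)^V| < \delta$ many $\simeq_{proj(G,\theta)}$--classes, hence is $< \delta$ by inaccessibility. The one place you should be more explicit is that the weak Chang property is what guarantees this ordinal sits in the wellfounded part in the first place (your final remark gestures at this but treats it as a sanity check rather than a load-bearing step).

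For item \eqref{item_SomeIdealBounds} there is a genuine gap, and the paper's route is quite different from what you sketch. You correctly flag the transfer from ``bounds completeness'' (a statement about one $I_\theta$) to the tower as the hard step, and you propose a diagonalization across $\theta$'s to manufacture a single $f\colon \mu\to\mu$ dominating $M \mapsto otp(M\cap ORD)$; you then say this forces $j_G(\mu) < \delta$. That conclusion is the wrong direction: domination by $f$ on a positive set at level $\theta$ forces $\theta \le [f]_{proj(G,\mu)} < j_{proj(G,\mu)}(\mu)$, i.e.\ it pushes $j_{proj(G,\mu)}(\mu)$ \emph{up} toward $\delta$, contradicting item \eqref{item_LessDelta}, not pushing $j_G(\mu)$ \emph{down}. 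More importantly, the paper avoids any diagonalization entirely. It applies item \eqref{item_LessDelta} with $\theta = \mu$ to find a condition $(\alpha, A)$ that \emph{decides} $j_{proj(\dot G,\mu)}(\mu)$ to be some fixed $\eta < \alpha$, and then shows that $I_\alpha \upharpoonright A$ itself bounds its completeness: if $f\colon\mu\to\mu$ had an $I_\alpha$-positive $A' \subseteq A$ with $otp(M\cap ORD) \le f(M\cap\mu)$ on $A'$, one forces below $(\alpha, A')$ and gets, via Fact~\ref{fact_TowerFacts}\eqref{item_k_maps},
\[
\alpha = [\,M \mapsto otp(M\cap\alpha)\,]_{proj(G,\alpha)} \le [\,M \mapsto f(M\cap\mu)\,]_{proj(G,\alpha)} = [f]_{proj(G,\mu)} < j_{proj(G,\mu)}(\mu) = \eta < \alpha,
\]
a contradiction, all with a single $f$ and a single level $\alpha$. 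Your proposed density/diagonal argument could in principle be made to work (it would show $j_{proj(G,\mu)}(\mu) \ge \delta$, again contradicting item \eqref{item_LessDelta}), but as written it is vague, has the inequality reversed at the end, and misses the much shorter route of fixing one deciding condition and exhibiting the bounding ideal directly. Also, the paper does not invoke any black-box equivalence from \cite{Cox_MALP} here; the bounding is proved directly from the definition.
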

\begin{proof}
Part \ref{item_LessDelta}:    Suppose not; let $\mu:= \lambda^+$ and let $G$ and $\theta$ be such that $j_{proj(G,\theta)}(\mu) \ge \delta$.  By assumption, $j_G(\mu) = \delta$; so in fact $\delta  = j_{proj(G,\theta)}(\mu)  = \{ [f]_{proj(G,\theta)} \ | \ f \in ({}^{Z_\theta} \mu)^V  \}$.  By Fact \ref{fact_TowerFacts}, $\{ [f]_{proj(G,\theta)} \ | \ f \in ({}^{Z_\theta} \mu)^V  \}$ is an element of $ult(V,G)$; moreover
\begin{displaymath}
|\{ [f]_{proj(G,\theta)} \ | \ f \in ({}^{Z_\theta} \mu)^V  \}|^{ult(V,G)} \le |({}^{Z_\theta} \mu)^V|^{ult(V,G)} \le |({}^{Z_\theta} \mu)^V|^V < \delta
\end{displaymath}
(by inaccessibility of $\delta$ in $V$).  This contradicts that $\delta$ is regular in $ult(V,G)$.

Part \ref{item_SomeIdealBounds}:  By part \ref{item_LessDelta} with $\theta:= \mu$, there is a condition $(\alpha, A)$ in the tower which decides the value of $j_{proj(\dot{G}, \mu)}(\mu)$ as some $\eta < \delta$.  Without loss of generality, assume:
\begin{equation}
\eta < \alpha
\end{equation}
We show that $I_\alpha \upharpoonright A$ bounds its completeness; a similar argument shows that $I_\beta \upharpoonright A^{V_\beta}$ bounds its completeness for every $\beta \in [\alpha, \delta)$.

Let $f: \mu \to \mu$.  Suppose for a contradiction that there were some $A' \subseteq A$ such that $A'$ is $I_\alpha$-positive and for every $M \in A'$: $otp(M \cap ORD) \le f(M \cap \mu)$ (note also that $M \cap ORD = M \cap \alpha$ for all $M \in A'$).  Let $G$ be generic for the tower with $(\alpha, A') \in G$.  Then $A' \in proj(G, V_\alpha)$ and so:
\begin{displaymath}
\eta < \alpha = [M \mapsto otp(M \cap \alpha)]_{proj(G, V_\alpha)} \le [M \mapsto f(M \cap \mu)]_{proj(G, V_\alpha)}
\end{displaymath}
Now $f$ maps into $\mu$, so:
\begin{displaymath}
[f]_{proj(G,\mu)} < j_{proj(G,\mu)}(\mu) = \eta
\end{displaymath}
So by  part \ref{item_k_maps} of Fact \ref{fact_TowerFacts}, $[f]_{proj(G,\mu)}$ is not moved by $k_{proj(G,\mu), proj(G,V_\alpha)}$:
\begin{displaymath}
[f]_{proj(G, \mu)} = k_{proj(G,\mu), proj(G,V_\alpha)}([f]_{proj(G, \mu)}) = [M \mapsto f(M \cap \mu)]_{proj(G,V_\alpha)}  
\end{displaymath} 
But this implies $\eta < \eta$, a contradiction.     
\end{proof}

\section{\texorpdfstring{RP and towers on $GIC_{\omega_1}$}{RP and towers on GIComega1}}\label{sec_RP_towers}

In this section we prove Theorems \ref{thm_MainGIC}, \ref{thm_NoDefinablePrecipGIC}, and  \ref{thm_PFAplus_impliesNonPresatGIC}.  

\subsection{Proof of Theorem \ref{thm_MainGIC}}

Theorem \ref{thm_MainGIC} follows from Corollary \ref{cor_PresatImpliesWeakChang}, Lemma \ref{lem_LessDelta}, and the following:
\begin{theorem}\label{thm_RP_Ideal}
Assume $RP([\omega_2]^\omega)$.  Then there is no ideal $I$ which bounds its completeness and concentrates on $GIC_{\omega_1}$.
\begin{com}
ALSO:  Can you get a similar result for $\omega_2$-guessing models?  What kind of isomorphism theorem can you get for transitive models $M$, $N$ such that $M \cap H_{\omega_2} = N \cap H_{\omega_2}$ and both are $\omega_2$-guessing?  To run Matteo's argument, you would need to know, I think, that $M \cap [M]^{\omega_1}$ is cofinal in $N \cap [N]^{\omega_1}$ and vice-versa...
\end{com}

\end{theorem}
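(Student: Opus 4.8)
The plan is to argue by contradiction, extracting from such an ideal a single function $f:\omega_2\to\omega_2$ which witnesses that $I$ does \emph{not} bound its completeness. Suppose then that $I$ is a normal, fine, countably complete ideal over $\wp_{\omega_2}(H_\theta)$ (in the situations of interest the support of $I$ is of this form), with completeness $\omega_2$, which concentrates on $GIC_{\omega_1}$; fix a wellorder $\Delta$ of $H_\theta$. Since $\breve I$ contains the club filter on $\wp_{\omega_2}(H_\theta)$ and is fine, we may assume that $\breve I$-almost every $M$ is an elementary submodel of $(H_\theta,\in,\Delta)$ lying in $GIC_{\omega_1}$. For each such $M$ the set $M\cap\omega_2$ is automatically an ordinal: since $\omega_1\subseteq M\prec H_\theta$, every $\alpha\in M\cap\omega_2$ is a surjective image of $\omega_1$ by some map in $M$, hence $\alpha\subseteq M$; so $f(M\cap\omega_2)$ makes sense for $f:\omega_2\to\omega_2$. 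I would also use here that $RP([\omega_2]^\omega)$ gives $2^\omega=\omega_2$, which makes the Isomorphism Theorem \ref{thm_IsomIS} available.

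The heart of the argument is to choose the right $f$. For an ordinal $a$ with $\omega_1\le a<\omega_2$, let $\mathcal G_a$ be the set of all $M\prec(H_\theta,\in,\Delta)$ with $M\in GIC_{\omega_1}$ and $M\cap\omega_2=a$. By the $GIC_{\omega_1}$ clause of Theorem \ref{thm_IsomIS}, for any $M,M'\in\mathcal G_a$ one of $H_M,H_{M'}$ is a hereditary initial segment of the other; hence the distinct transitive collapses arising from $\mathcal G_a$ are linearly ordered by ``hereditary initial segment''. The crucial observation is that this chain is short: a proper hereditary initial segment has strictly smaller ordinal height, so $H_M\mapsto otp(H_M\cap ORD)$ embeds the distinct collapses order-preservingly into $\omega_2$; hence there are fewer than $\omega_2$ of them, each of size $\omega_1$, so $\hat H_a:=\bigcup\{H_M : M\in\mathcal G_a\}$ has size at most $\omega_1$ and $f(a):=otp(\hat H_a\cap ORD)$ is an ordinal $<\omega_2$. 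Setting $f(a):=0$ for $a<\omega_1$, this defines the desired $f:\omega_2\to\omega_2$.

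To finish: for $\breve I$-almost every $M$ we have $M\in\mathcal G_{M\cap\omega_2}$, so $H_M\subseteq\hat H_{M\cap\omega_2}$ and therefore $otp(M\cap ORD)=otp(H_M\cap ORD)\le f(M\cap\omega_2)$. Thus $\{M : otp(M\cap ORD)>f(M\cap\omega_2)\}$ is $I$-null, which directly contradicts that $I$ bounds its completeness. (The same fact can be read off the generic ultrapower $j_G:V\to ult(V,G)$ by $\mathbb P_I$: it says that $M\mapsto otp(M\cap ORD)$ is bounded modulo $I$ by a function of $M\cap\omega_2$, which pins the ordinal height of the transitive collapse of $j_G[H_\theta]$ strictly below where ``$I$ bounds its completeness'' would force it.) Feeding this into Corollary \ref{cor_PresatImpliesWeakChang} and Lemma \ref{lem_LessDelta} then yields Theorem \ref{thm_MainGIC}.

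The step I expect to be the real obstacle --- and essentially the only place where Theorem \ref{thm_IsomIS}, hence $2^\omega=\omega_2$, hence $RP([\omega_2]^\omega)$, genuinely enters --- is the claim that the collapses arising from $\mathcal G_a$ are linearly ordered by ``hereditary initial segment'' and form a chain of length $<\omega_2$, so that $f(a)<\omega_2$; the rest is routine bookkeeping about normal ideals. Two smaller points to handle carefully are that $RP([\omega_2]^\omega)$ does yield $2^\omega=\omega_2$ (needed for Theorem \ref{thm_IsomIS} to apply), and that all the hereditary-initial-segment comparisons are carried out relative to the one fixed structure $(H_\theta,\in,\Delta)$.
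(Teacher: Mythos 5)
The gap you flagged as ``the real obstacle'' is in fact a genuine error, and it is fatal: an order-preserving map from the chain of collapses into $\omega_2$ gives \emph{at most} $\omega_2$-many distinct collapses, not \emph{fewer than} $\omega_2$-many. A chain of order type exactly $\omega_2$ embeds order-preservingly into $\omega_2$, so you cannot conclude that $\hat H_a$ has size $\omega_1$, and consequently $f(a) := \mathrm{otp}(\hat H_a \cap \mathrm{ORD})$ need not be $<\omega_2$. Worse, the hypothesis that $I$ bounds its completeness \emph{forces} the heights to be unbounded in $\omega_2$ for $\bar I$-measure-one many $a$: this is precisely Claim \ref{clm_AtLeastOmega2} in the actual proof, whose argument is a short version of your own closing paragraph run in reverse (if the heights were bounded by some $s_a < \omega_2$, then $a \mapsto s_a$ would be a function of $M \cap \omega_2$ dominating $\mathrm{otp}(M \cap \mathrm{ORD})$ on a positive set, contradicting bounding). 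So the function $f$ you are trying to extract cannot exist, and the contradiction you want has to come from somewhere else.

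The ``somewhere else'' is where $RP([\omega_2]^\omega)$ actually does its work, and your proposal never reaches it. After using the bounding hypothesis to find an $\alpha$ with $\sup\{\mathrm{ht}(H) : H \in T(\alpha)\} = \omega_2$, the paper takes $W := \bigcup T(\alpha)$: by the Isomorphism Theorem \ref{thm_IsomIS} this is a transitive $\mathrm{ZFC}$ model of height $\omega_2$, and since each $H \in T(\alpha)$ is in $IC_{\omega_1}$, one gets that $W \cap [\beta]^\omega$ contains a club in $[\beta]^\omega$ for every $\beta < \omega_2$. Because $\mathbb{R} \cap W = \phi[\alpha]$ is proper, Velickovic's Theorem \ref{thm_Velick} makes $[\omega_2]^\omega \setminus W$ stationary; $RP([\omega_2]^\omega)$ then reflects this to some $[\beta]^\omega$, contradicting that $W \cap [\beta]^\omega$ contains a club. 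This is where reflection is used in an essential way, not merely to secure $2^\omega \le \omega_2$. Your proposal, if it worked, would prove the theorem from $2^\omega = \omega_2$ alone, which should already raise a flag given Theorem \ref{thm_Consistent_GIC_Precip}. So the structure to aim for is: use bounding to push the tree of collapses \emph{up} to height $\omega_2$ (not to bound it below $\omega_2$), then invoke Velickovic plus $RP$ to get the contradiction.
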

\begin{proof}
Todorcevic proved that $RP([\omega_2]^\omega)$ implies $2^\omega \le \omega_2$ (see Theorem 37.18 of \cite{MR1940513}).  If CH holds, then for every $\theta \ge \omega_2$ the set of $\omega_1$-guessing submodels of $H_{\theta}$ is nonstationary (see \cite{Viale_GuessingModel}) and the theorem holds trivially.  

So suppose from now on that $2^\omega = \omega_2$.  Suppose for a contradiction that $I$ is a normal ideal concentrating on some stationary subset $S$ of $GIC_{\omega_1}$ (at some $H_\theta$), and that $I$ bounds its completeness (which is $\omega_2$).    Without loss of generality we assume that for every $M \in S$, $M \prec (H_\theta, \in, \Delta, \phi)$ where $\phi$ is some enumeration of the reals.  For each $\alpha \in proj(S, \omega_2)$ let $T(\alpha)$ be the collection of all transitive sets of the form $H_M$, where $M \in S$ and $M \cap \omega_2 = \alpha$.  

Let $\bar{I}$ be the projection of $I$ to $\omega_2$.  
\begin{NonGlobalClaim}\label{clm_AtLeastOmega2}
For $\bar{I}$-measure-one many $\alpha$, $s_\alpha := sup \{ ht(H) \ | \ H \in T(\alpha) \}$ is at least $\omega_2$.
\end{NonGlobalClaim}
\begin{proof}
(of Claim \ref{clm_AtLeastOmega2}):  Suppose not; so there is some $S'$ which is $I$-positive and for every $M \in S'$, $s_{M \cap \omega_2} < \omega_2$.  Let $f: \omega_2 \to \omega_2$ be defined by sending $\alpha \mapsto s_\alpha $ if $s_\alpha  < \omega_2$, and $f(\alpha) = 0$ otherwise.  Since $I$ bounds its completeness, there is some $C \in \breve{I}$ such that for every $M \in C$:  $otp(M \cap ORD) = ht(H_M) > f(M \cap \omega_2)$.  Then for every $M \in C \cap S'$:  $f(M \cap \omega_2) = s_{M \cap \omega_2}   < ht(H_M)$.  Fix any $\hat{M} \in C \cap S'$ and let $\hat{\alpha} := \hat{M} \cap \omega_2$; then
\begin{equation}
s_{\hat{\alpha}}   < ht(H_{\hat{M}})
\end{equation}
yet $H_{\hat{M}} \in T(\hat{\alpha})$; this is clearly a contradiction to the definition of $s_{\hat{\alpha}}$.
\end{proof}

Fix any $\alpha$ such that $s_\alpha = \omega_2$, and let $W := \bigcup T(\alpha)$.  Now $S \subseteq GIC_{\omega_1}$, so by Theorem \ref{thm_IsomIS} whenever $H$ and $H'$ are elements of $T(\alpha)$ and $ht(H) < ht(H')$, then $H$ is a hereditary initial segment of $H'$; this implies that $W$ is a transitive ZFC model (of height $\omega_2$).  Since $H \in IC_{\omega_1}$ for every $H \in T(\alpha)$, then:
\begin{equation}\label{eq_AllInitSegsClub}
\parbox{\MyBoxWidth \linewidth}{
For every $\beta < \omega_2$, $W \cap [\beta]^\omega$ contains a club.
}
\end{equation}
To see why (\ref{eq_AllInitSegsClub}) holds: let $\beta < \omega_2$.  Pick an $H \in T(\alpha)$ such that $\beta < H \cap ORD$, and let $\langle N_\xi \ | \ \xi < \omega_1 \rangle$ witness that $H \in IC_{\omega_1}$.  Then $\{ N_\xi \cap \beta \ | \ \xi < \omega_1 \}$ is a closed unbounded subset of $[\beta]^\omega$, and each $N_\xi \cap \beta$ is an element of $H \subset W$.  

Now $\mathbb{R} \cap W = \phi[\alpha]$; in particular $\mathbb{R} - W \ne \emptyset$.  By Theorem \ref{thm_Velick},  $S:=[\omega_2]^\omega - W$ is stationary (in fact projective stationary).  By $RP([\omega_2]^\omega)$, there is a $\beta < \omega_2$ such that $S \cap [\beta]^\omega$ is stationary.\footnote{This uses the fact that $\{ \beta \ | \ \omega_1 \le \beta < \omega_2  \}$ is a club subset of $[\omega_2]^{\omega_1}$ and that  $RP([\omega_2]^\omega)$ implies the following apparently stronger statement (see Theorem 3.1 of Feng-Jech~\cite{MR977476}):  for every stationary $S \subset [\omega_2]^\omega$, there are \emph{stationarily many} $Z \subset [\omega_2]^{\omega_1}$ such that $\omega_1 \subset Z$ and $S \cap [Z]^\omega$ is stationary.}  This contradicts (\ref{eq_AllInitSegsClub}). 
\end{proof}

\subsection{Proof of Theorem \ref{thm_NoDefinablePrecipGIC}}
Now we prove Theorem \ref{thm_NoDefinablePrecipGIC}; that is, if RP is omitted from the hypothesis of Theorem \ref{thm_MainGIC}, the Isomorphism Theorem for $GIC_{\omega_1}$ prevents precipitous towers on $GIC_{\omega_1}$ which are \emph{definable}.
\begin{proof}
If CH holds there are no $G_{\omega_1}$ structures so the theorem is trivial.  So assume $2^\omega = \omega_2$.  Suppose $\vec{I} = \langle I_\theta \ | \ \theta  \in U \rangle$ were such a tower.  By Lemma 4.3 of Burke~\cite{MR1472122}, a precipitous tower is not an element of the generic ultrapower.\footnote{ If the generic embedding moves $\delta$, which is always the case if the tower concentrates on $\{ M \ | \ |M|= \lambda \subset M  \}$.}  Since we are assuming the tower is definable over $V_\delta$, to obtain a contradiction it suffices to show that $V_\delta$ is an element of some generic ultrapower by the tower.  Let $G$ be generic for the tower; by Fact \ref{fact_TowerFacts}, $H_\theta^V \in ult(V,G)$ for each $\theta \in U$.  Also, since $\vec{I}$ concentrates on $GIC_{\omega_1}$ and $\omega_1 < cr(j_G)$, then by the Los Theorem, $ult(V,G) \models$ ``$H_\theta^V \in GIC_{\omega_1}$'' for each $\theta < \delta$.  Set $\kappa:= \omega_2^V$.  By Theorem  \ref{thm_IsomIS}, for each $\theta < \delta$, $ult(V,G)$ believes there is at most one $H$ such that $H$ is a transitive $ZF^-$ model of height $H \cap ORD$ such that $\mathbb{R}^{ult(V,G)} \cap H = \mathbb{R}^{ult(V,G)} \cap (H_{\omega_2}^V)$.  Thus when $ult(V,G)$ takes the union of all such models of height $< \delta$, the result is $V_\delta$ (since for each $\theta \in U$, $H_\theta \in ult(V,G)$ is such an $H$).  So $V_\delta \in ult(V,G)$ and we have a contradiction.     
\end{proof}

\subsection{Proof of Theorem \ref{thm_PFAplus_impliesNonPresatGIC}}

Finally we prove Theorem \ref{thm_PFAplus_impliesNonPresatGIC}.  It is well-known that either $MM$ or $PFA^+$ implies $RP$; so Theorem \ref{thm_PFAplus_impliesNonPresatGIC} will follow from Theorem \ref{thm_MainGIC} and the following: 
\begin{lemma}
Assume PFA and let $\delta$ be inaccessible.  Then there is a tower of height $\delta$ which concentrates on $GIC_{\omega_1}$.
\end{lemma}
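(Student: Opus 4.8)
The plan is to realize the tower as a level-by-level restriction of the nonstationary ideal to the right subclass of $GIC_{\omega_1}$. Recall that PFA implies $2^\omega=\omega_2$ and, by the Viale--Weiss theorem quoted before Theorem \ref{thm_GIC_persistent}, that $GIC_{\omega_1}\cap\wp_{\omega_2}(H_\theta)$ is stationary for every regular $\theta\ge\omega_2$; moreover their argument produces such structures \emph{persistently}. Let $\mathbb{R}:=\mathbb{R}_\delta$ be the proper poset of Theorem \ref{thm_GIC_persistent}; since $\delta$ is inaccessible, $\mathbb{R}\in H_{\delta^+}$. Fix a regular $\chi\ge\delta^+$, and call $M\prec H_\chi$ with $\mathbb{R}\in M$, $|M|=\omega_1\subseteq M$, \emph{caught} if there is an $(M,\mathbb{R})$-generic filter. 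For each regular $\theta$ with $\omega_2\le\theta<\delta$ put $Z_\theta:=\{M\cap H_\theta\mid M\text{ caught}\}$, and let $I_\theta$ be the nonstationary ideal on $\wp_{\omega_2}(H_\theta)$ restricted to $Z_\theta$, i.e.\ $NS\upharpoonright Z_\theta$. By clause (3) of Theorem \ref{thm_GIC_persistent}, every caught $M$ has $M\cap H_\delta\in GIC_{\omega_1}$, so by Lemma \ref{lem_ClassesProject} (applied to both $G_{\omega_1}$ and $IC_{\omega_1}$) each $M\cap H_\theta=(M\cap H_\delta)\cap H_\theta$ lies in $GIC_{\omega_1}$; thus $Z_\theta\subseteq GIC_{\omega_1}$, and $I_\theta$ is a normal, fine, $\omega_2$-complete ideal of completeness $\omega_2$ which concentrates on $GIC_{\omega_1}$.

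It remains to check that $\langle I_\theta\rangle$ is a \emph{tower}. By the remark following Lemma \ref{lem_ClassesProject} (the argument of Proposition~2.4 of \cite{MR1359154}), since the caught class \emph{projects} (just shown), it is enough to verify (a) that each $Z_\theta$ is stationary, so $I_\theta$ is proper, and (b) that the caught class \emph{lifts with respect to $NS$}: whenever $S\subseteq Z_\theta$ is stationary and $\theta\le\theta'<\delta$, the set $\{M'\in Z_{\theta'}\mid M'\cap H_\theta\in S\}$ is stationary in $\wp_{\omega_2}(H_{\theta'})$. Both are instances of a standard ``catching'' argument using properness of $\mathbb{R}$. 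For (a): given a coloring $F$ of $[H_\chi]^{<\omega}$, take $N\prec(H_\chi,\in,F)$ with $|N|=\omega_1\subseteq N$ and $\mathbb{R}\in N$, and apply PFA to $\mathbb{R}$ with the at most $\omega_1$ dense subsets of $\mathbb{R}$ lying in $N$; properness upgrades the resulting filter to one that is $(N,\mathbb{R})$-generic in the sense of this paper, so $N$ is caught and $N\cap H_\theta\in Z_\theta$ is $F$-closed. For (b): run the same argument starting from some $N_0\prec(H_\chi,\in,F)$ of size $\omega_1$ with $N_0\cap H_\theta\in S$ --- such $N_0$ exists because the preimage $\{N\in\wp_{\omega_2}(H_\chi)\mid N\cap H_\theta\in S\}$ of the stationary set $S$ is stationary while the $F$-closed submodels form a club; then $N_0\cap H_{\theta'}\in Z_{\theta'}$ is $F$-closed with trace $N_0\cap H_\theta\in S$. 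This yields that $\langle I_\theta\mid\theta\text{ regular},\ \omega_2\le\theta<\delta\rangle$ is a tower of height $\delta$, with critical point $\omega_2$, concentrating on $GIC_{\omega_1}$.

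The one place where real work is needed is the bookkeeping inside the catching argument: turning the filter delivered by PFA into one that meets the dense sets of $\mathbb{R}$ lying in $N$ \emph{within} $N$ (so that it is $(N,\mathbb{R})$-generic in the paper's sense), uniformly enough to also respect the trace constraint in step (b). I expect this --- equivalently, the lifting property --- to be the main obstacle, and it is exactly here that the persistence clause~(\ref{item_Persistence}) of Theorem \ref{thm_GIC_persistent} is the relevant tool: it makes ``being caught'' a robust, locally witnessed property, which is what lets the caught subclass of $GIC_{\omega_1}$ lift with respect to $NS$ even though, as remarked after Lemma \ref{lem_ClassesProject}, the full class $G_{\omega_1}$ is not expected to. In short, the genuinely hard analysis is already carried out in \cite{VW_ISP} and in Theorem \ref{thm_GIC_persistent}; the content of the lemma is to organize it into a tower.
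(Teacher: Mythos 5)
Your construction is genuinely different from the paper's, and it has a gap precisely where you flag it. You want each $I_\theta$ to be the full restriction $NS\upharpoonright Z_\theta$ of the nonstationary ideal, and then the tower coherence \emph{is} the lifting property: for $\theta<\theta'$ and $S\subseteq Z_\theta$ stationary, the set of caught $M$ with $M\cap H_\theta\in S$ must be stationary. But the paper explicitly warns, in the remark following Lemma \ref{lem_ClassesProject}, that lifting with respect to $NS$ is ``not clear --- and seems doubtful'' for classes of $G_{\omega_1}$-type; this is exactly why $G_{\omega_1}\cap IA_{\omega_1}=\emptyset$ shows that $G_{\omega_1}$ cannot contain a club, unlike $IC_{\omega_1}$ etc. Your step (b) silently assumes you can catch an \emph{arbitrary} $N_0\prec(H_\chi,\in,F)$ with $N_0\cap H_\theta\in S$. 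That cannot be right: if it were, every $N_0$ with $\mathbb{R}_\delta\in N_0$ would be caught, so $GIC_{\omega_1}$ would contain a club of $\wp_{\omega_2}(H_\delta)$, contradicting that $IA_{\omega_1}$ is stationary and disjoint from $G_{\omega_1}$. The subtlety is that properness only gives master conditions \emph{below conditions in $N_\xi$}; the dense sets needed to turn a PFA filter into an $(N_0,\mathbb{R}_\delta)$-generic filter are not dense in all of $\mathbb{R}_\delta$ for arbitrary $N_0$. Concretely, the catching argument of \cite{VW_ISP} \emph{constructs} the model $N_0$ from the PFA filter; it does not catch a model fixed in advance, and in particular does not catch one with a prescribed trace on $H_\theta$ lying in a prescribed stationary set. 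The persistence clause of Theorem \ref{thm_GIC_persistent} makes ``caught'' downward robust (in outer models) but does not help with this upward stationarity-lifting.

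The paper sidesteps lifting entirely. It fixes the single ideal $J:=NS\upharpoonright\bigl(GIC_{\omega_1}\cap\wp_{\omega_2}(V_\delta)\bigr)$ at the top level --- proper by the Viale--Weiss stationarity theorem --- and sets $I_\lambda$ to be the canonical projection of $J$ to $Z_\lambda:=\{M\cap H_\lambda\mid M\in GIC_{\omega_1}\cap\wp_{\omega_2}(V_\delta)\}$. Coherence is then automatic, because the projection of a projection is a projection; no lifting argument is needed. Each $I_\lambda$ concentrates on $GIC_{\omega_1}$ by Lemma \ref{lem_ClassesProject}. Note $I_\lambda$ is in general strictly \emph{larger} than $NS\upharpoonright Z_\lambda$: a set $A\subseteq Z_\lambda$ is $I_\lambda$-null whenever its preimage in $\wp_{\omega_2}(V_\delta)$ misses $GIC_{\omega_1}$ on a club, which can happen even if $A$ itself is stationary. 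So the object you aimed at (a stationary tower on the caught class) and the object the paper actually builds (a projected tower) are different; only the latter is known to exist under PFA.
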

\begin{proof}
In \cite{VW_ISP} it was shown that PFA implies that $GIC_{\omega_1} \cap \wp_{\omega_2}(H_\theta)$ is stationary for all regular $\theta \ge \omega_2$.  

For each $\lambda < \delta$ set $Z_\lambda := \{ M \cap H_\lambda \ | \ M \in GIC_{\omega_1} \cap \wp_{\omega_2}(V_\delta)  \}$ and set $I_\lambda := $ the projection of $NS \upharpoonright GIC_{\omega_1} \cap \wp_{\omega_2}(V_\delta)$ to a normal ideal on $Z_\lambda$.  It is straightforward to check that a sequence of ideals defined in this way is a tower.  By Lemma \ref{lem_ClassesProject}, each $Z_\lambda \subset GIC_{\omega_1} \cap \wp_{\omega_2}(H_\lambda)$.   

Alternatively, once can check that the sequence $\langle Z_\lambda \ | \ \lambda < \delta \rangle$ satisfies Lemma 9.49 of \cite{MattHandbook}, and then use Burke's ``stabilization'' technique to produce a tower of ideals concentrating on the $Z_\lambda$s.  It is not clear whether this yields the same tower as the previous paragraph.
\end{proof}

\section{\texorpdfstring{SRP and towers on $GIS_{\omega_1}$}{SRP and towers on GISomega1}}\label{sec_SRP_towers}

In this section we prove Theorem \ref{thm_MainGIS}.  The Tree Property at $\kappa$ ($TP(\kappa)$) is the statement that every tree of height $\kappa$ and width $< \kappa$ has a cofinal branch.  Theorem \ref{thm_MainGIS} follows from Corollary \ref{cor_PresatImpliesWeakChang}, Lemma \ref{lem_LessDelta}, and the following theorem:
\begin{theorem}\label{thm_SRP_TP_GIS}
Assume $SRP(\omega_2)$ and $TP(\omega_2)$.  Then there is no ideal concentrating on  $GIS_{\omega_1}$ which bounds its own completeness.
\end{theorem}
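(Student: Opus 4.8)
The plan is to mimic the structure of the proof of Theorem \ref{thm_RP_Ideal}, replacing the class $GIC_{\omega_1}$ by $GIS_{\omega_1}$, the Isomorphism Theorem for $GIC_{\omega_1}$ by the one for $GIS_{\omega_1}$ (the second bullet of Theorem \ref{thm_IsomIS}), and $RP([\omega_2]^\omega)$ by the combination $SRP(\omega_2)$ + $TP(\omega_2)$. First, since $SRP(\omega_2)$ implies $RP([\omega_2]^\omega)$, we again get $2^\omega \le \omega_2$; the case $CH$ is trivial since then there are no $G_{\omega_1}$ structures, so assume $2^\omega = \omega_2$. Suppose toward a contradiction that $I$ is a normal ideal concentrating on a stationary $S \subseteq GIS_{\omega_1}$ at some $H_\theta$, with all $M \in S$ elementary in $(H_\theta,\in,\Delta,\phi)$ ($\phi$ an enumeration of $\mathbb{R}$), and that $I$ bounds its completeness. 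The complication, compared to the $GIC$ case, is that the Isomorphism Theorem for $GIS_{\omega_1}$ only gives comparability of $H_M, H_{M'}$ (with $M \cap \omega_2 = M' \cap \omega_2$) when the stationary witnessing sets $T_M \cap T_{M'}$ is stationary. So I would refine the selection: for a fixed stationary $T \subseteq \omega_1$, let $S_T := \{M \in S \mid T_M =_{NS} T\}$; by normality and the pressing-down that realizes $M \mapsto T_M$ as (essentially) an $I$-measurable function, $S_T$ is $I$-positive for some fixed $T$. Work with $I \upharpoonright S_T$, which still bounds its completeness (a restriction of an ideal bounding completeness to a positive set bounds completeness). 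Now for each $\alpha \in \mathrm{proj}(S_T,\omega_2)$ set $T(\alpha) := \{H_M \mid M \in S_T,\ M \cap \omega_2 = \alpha\}$; by the $GIS$ Isomorphism Theorem, any two members of $T(\alpha)$ are comparable as hereditary initial segments, so $W_\alpha := \bigcup T(\alpha)$ is a transitive $ZF^-$ model.

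Next, exactly as in Claim \ref{clm_AtLeastOmega2}, the assumption that $I$ bounds its completeness forces that for $\breve I$-many $M$, $s_{M \cap \omega_2} := \sup\{\mathrm{ht}(H) \mid H \in T(M\cap\omega_2)\} \ge \omega_2$: otherwise $\alpha \mapsto s_\alpha$ is a function $\omega_2 \to \omega_2$ whose canonical-function-bounding failure contradicts the definition of the $s_\alpha$. Fix such an $\alpha$ with $s_\alpha = \omega_2$ and let $W := W_\alpha$, a transitive ZF$^-$ model of height $\omega_2$ with $\omega_2 \subseteq W$ and $\mathbb{R} \cap W = \phi[\alpha] \ne \mathbb{R}$. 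Here is the crucial point where the $GIS$ argument departs from $GIC$: in the $GIC$ case each $H \in T(\alpha)$ is $\omega_1$-internally club, so $W \cap [\beta]^\omega$ contained a club for every $\beta < \omega_2$, contradicting $RP$ applied to the projective-stationary set $[\omega_2]^\omega \setminus W$. For $GIS$, each $H \in T(\alpha)$ is only $\omega_1$-internally \emph{stationary}, witnessed by the \emph{fixed} stationary $T$. So what we get is: for every $\beta < \omega_2$, $W \cap [\beta]^\omega$ is \emph{projective stationary relative to the complement of $T$}, i.e.\ $\{N \in W \cap [\beta]^\omega \mid N \cap \omega_1 \notin T\}$ is nonstationary in $[\beta]^\omega$ — equivalently the trace of $W$ on $[\beta]^\omega$ contains $T$-\,club many points for the $T$-part. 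More usefully: $[\beta]^\omega \setminus W$ restricted to $\{N \mid N\cap\omega_1 \in T\}$ is nonstationary, for every $\beta < \omega_2$. Meanwhile $[\omega_2]^\omega \setminus W$ is projective stationary (Corollary \ref{cor_ToVelick}), hence $\{N \in [\omega_2]^\omega \setminus W \mid N \cap \omega_1 \in T\}$ is stationary. Now $SRP(\omega_2)$ applied to this projective-stationary-on-$T$ set would produce a continuous $\in$-chain $\langle N_\xi \mid \xi < \omega_1\rangle$ of countable models, lying in $[\omega_2]^\omega \setminus W$ along a club in $\xi$, whose union $X$ has $[X]^\omega$ containing such points club-often; projecting to a suitable $\beta = \sup(X) < \omega_2$ contradicts the nonstationarity statement just derived.

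The step I expect to be the main obstacle — and where $TP(\omega_2)$ must enter — is showing that $W$ really has height $\omega_2$ in the first place, i.e.\ that the "union of initial segments" does not stabilize below $\omega_2$, and more subtly that $W \models ZF^-$ has the correct $\omega_2$ (that $\omega_2^W = \omega_2$), so that Corollary \ref{cor_ToVelick} applies with $\omega_2 \subseteq W$. In the $GIC$ case this came for free from internal clubness, which forces $\sup(M\cap\theta)$ to have uncountable cofinality and keeps the collapse structure rigid; for $GIS$ one only has stationarily-many elements of the approximating chain inside $M$, so $M \cap ORD$ need not be $\omega$-closed and the transitive collapses could in principle have unexpected cofinality structure or collapse $\omega_2$. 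This is precisely where the Tree Property at $\omega_2$ is used: $TP(\omega_2)$ (a consequence of, and in the relevant Viale–Weiss sense packaged with, $GIS_{\omega_1}$ being stationary) rules out the pathological branchless configurations, guaranteeing that the directed union $W$ of the comparable transitive $GIS$-models is a well-behaved transitive model with $\omega_2^W = \omega_2$ and height exactly $\omega_2$. So the plan is: (i) reduce to a single stationary $T$ and apply the $GIS$ Isomorphism Theorem to get comparability and hence a transitive union $W$; (ii) use completeness-bounding to push $\mathrm{ht}(W)$ up to $\omega_2$, invoking $TP(\omega_2)$ to control the collapse and keep $\omega_2$ correct; (iii) extract from internal $T$-stationarity that every $W \cap [\beta]^\omega$ is "$T$-co-nonstationary"; (iv) use Velickovic's theorem (Corollary \ref{cor_ToVelick}) to see $[\omega_2]^\omega \setminus W$ is projective stationary and in particular stationary on $T$; (v) apply $SRP(\omega_2)$ to that set and project down to a contradiction with (iii). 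Theorem \ref{thm_MainGIS} then follows from Theorem \ref{thm_SRP_TP_GIS} together with Corollary \ref{cor_PresatImpliesWeakChang} and Lemma \ref{lem_LessDelta}, exactly as Theorem \ref{thm_MainGIC} followed from Theorem \ref{thm_RP_Ideal}.
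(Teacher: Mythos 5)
Your overall plan tracks the paper's proof, but step (i)---the reduction to a single stationary $T$---contains a genuine gap that is in fact the reason the paper's argument is structured differently. You propose to press down on $M \mapsto T_M$ to find one $T$ with $S_T := \{M \in S : T_M =_{NS} T\}$ $I$-positive. But normality of $I$ gives Fodor only for \emph{regressive} functions, i.e.\ functions with $f(M) \in M$; while $T_M \subseteq \omega_1 \subseteq M$, there is no reason to have $T_M \in M$ (indeed $T_M$ encodes, externally, which members of a chain converging to $M$ lie in $M$, information $M$ cannot have). So the pressing-down argument does not go through, and the family $\{T_M\}$ cannot be uniformized. This is precisely what forces the paper to abandon the "single $T$" picture: instead, the collection $T(\alpha)$ is organized as a \emph{tree} under the hereditary-initial-segment relation, and the Isomorphism Theorem for $GIS_{\omega_1}$ only gives comparability of two nodes when the corresponding $T$'s have stationary intersection. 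The paper then invokes the \emph{saturation of} $NS_{\omega_1}$ (a consequence of $SRP$ you do not use) to conclude that incomparable nodes at a fixed level correspond to pairwise-almost-disjoint stationary sets, hence each level has size $< \omega_2$; $TP(\omega_2)$ then produces a cofinal branch whose union is the transitive $W$.

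This also corrects your account of where $TP(\omega_2)$ enters. You speculate that it is needed to "control the collapse and keep $\omega_2$ correct," but if you already had linear comparability (as your step (i) would have given), $W$ would be transitive and of height $\omega_2$ automatically via the completeness-bounding argument (Claim \ref{clm_AtLeastOmega2}), with no tree property required. The real role of $TP(\omega_2)$ is to extract a cofinal branch from the thin tree $T(\alpha)$. Two smaller issues: in step (v) you apply $SRP(\omega_2)$ to the set $\{N \in [\omega_2]^\omega \setminus W : N \cap \omega_1 \in T\}$, which is only stationary, not projective stationary; $SRP$ should be applied to all of $[\omega_2]^\omega \setminus W$ (or its lift to $[H_{\omega_2}]^\omega$), which is projective stationary by Corollary \ref{cor_ToVelick}. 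And your plan to "project the SRP chain down to $\beta$" is shaky since $N_\xi \notin W$ does not imply $N_\xi \cap \beta \notin W$; the paper instead projects a guessing model $H \in T(\alpha)$ containing $Z := \sup_\xi(N_\xi \cap \omega_2)$ to $[Z]^\omega$, obtaining a stationary subset of $W$ that must meet the club $\{N_\xi \cap \omega_2\}$ disjoint from $W$.
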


First, note that SRP implies that $NS_{\omega_1}$ is saturated and that $2^\omega = \omega_2$ (see Chapter 37 of \cite{MR1940513}).

Suppose for a contradiction that $I$ concentrates on some stationary $S \subseteq GIS_{\omega_1}$ and bounds its own completeness (which is $\omega_2$).  Without loss of generality we can assume that for every $M \in S$: $M \prec (H_\theta, \in, \phi)$ where $\phi$ is some wellorder of the reals and $H_\theta$ is the support of $I$.  For each $M \in S$ let $T_M \subseteq \omega_1$ be the stationary set witnessing that $M \in IS_{\omega_1}$.  For each $\alpha \in proj(S, \omega_2)$ define $T(\alpha):= \{ H_M \ | \ M \in S \text{ and } \alpha = M \cap \omega_2 \}$; the downward closure of $T(\alpha)$ under the hereditary initial segment relation\footnote{i.e. the nodes of $T(\alpha)$ consists of transitive models of the form $H_M$ and models of the form $(H_{\lambda})^{H_M}$ where $\lambda \in REG^{H_M}$.} forms a tree of height $\le \omega_2$.
\begin{NonGlobalClaim}\label{clm_ThinTree}
For each $\alpha \in proj(S, \omega_2)$, the tree $T(\alpha)$ has width $< \omega_2$.
\end{NonGlobalClaim}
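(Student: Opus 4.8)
The plan is to bound every level of $T(\alpha)$ — and in fact every antichain — by $\omega_1$, using the $GIS_{\omega_1}$-clause of the Isomorphism Theorem (Theorem \ref{thm_IsomIS}) together with the saturation of $NS_{\omega_1}$, which follows from $SRP(\omega_2)$. Write $\mathcal{O}_\alpha := \{\, H_M \ | \ M \in S, \ M \cap \omega_2 = \alpha \,\}$ for the set of \emph{top nodes}; by construction $T(\alpha)$ is the downward closure of $\mathcal{O}_\alpha$ under the hereditary initial segment relation $\trianglelefteq$, so every node of $T(\alpha)$ is a hereditary initial segment of some top node. Since $|M| = \omega_1$ for every $M \in S$, each $H_M$ has order type $< \omega_2$, so every node of $T(\alpha)$ sits at some level $\beta < \omega_2$.

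First I would fix $\beta < \omega_2$ and let $L_\beta$ be the set of nodes of $T(\alpha)$ of height $\beta$. For each $H \in L_\beta$ choose, using the description of $T(\alpha)$ above, a top node $\widehat H = H_{M(H)}$ with $H \trianglelefteq \widehat H$, where $M(H) \in S$ and $M(H) \cap \omega_2 = \alpha$; and keep track of the stationary set $T_{M(H)} \subseteq \omega_1$ witnessing $M(H) \in IS_{\omega_1}$. The crucial combinatorial observation is that for distinct $H, H' \in L_\beta$ the top nodes $\widehat H$ and $\widehat{H'}$ are $\trianglelefteq$-incomparable: were $\widehat H \trianglelefteq \widehat{H'}$ (say), then $H$ and $H'$ would both be hereditary initial segments of $\widehat{H'}$ of the same height $\beta$; but a transitive $ZF^-$ model has at most one hereditary initial segment of each height $\le$ its own (the proper ones being the $(H_\lambda)$'s, whose heights are the regular cardinals $\lambda$ of the model), so $H = H'$, a contradiction. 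The same computation shows $H \mapsto M(H)$ is injective, and more generally that $M(H)$ and $M(H')$ are incomparable for any two distinct members of an arbitrary \emph{antichain} of $T(\alpha)$, not just of a single level.

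Next I would feed this into Theorem \ref{thm_IsomIS}. The models $M(H)$ are elementary submodels of the common structure on $H_\theta$ fixed in the proof, lie in $GIS_{\omega_1}$, and satisfy $M(H) \cap \omega_2 = \alpha = M(H') \cap \omega_2$; hence the contrapositive of the $GIS_{\omega_1}$-clause of Theorem \ref{thm_IsomIS} gives: if $M(H)$ and $M(H')$ are $\trianglelefteq$-incomparable, then $T_{M(H)} \cap T_{M(H')}$ is nonstationary. Applying this to distinct $H, H' \in L_\beta$ shows that $\{\, [T_{M(H)}]_{NS_{\omega_1}} \ | \ H \in L_\beta \,\}$ is an antichain of size $|L_\beta|$ in the Boolean algebra $\wp(\omega_1)/NS_{\omega_1}$ — the classes are pairwise distinct, indeed pairwise disjoint in the quotient, because each $T_{M(H)}$ is stationary while the pairwise intersections are not. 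Since $SRP(\omega_2)$ implies $NS_{\omega_1}$ is saturated, this antichain has size $< \omega_2$, so $|L_\beta| < \omega_2$; as $\beta$ was arbitrary, $T(\alpha)$ has width $< \omega_2$ (and, by the stronger form of the observation above, every antichain of $T(\alpha)$ has size $< \omega_2$ as well).

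The routine parts are the uniqueness of a hereditary initial segment of a prescribed height and the verification that the classes $[T_{M(H)}]$ genuinely form an antichain. The only step that uses the hypotheses is the passage from $\trianglelefteq$-incomparability of top nodes to nonstationarity of the intersection of their witnessing stationary sets, via Theorem \ref{thm_IsomIS}; everything else is bookkeeping plus the saturation of $NS_{\omega_1}$. Note that $TP(\omega_2)$ plays no role in this claim — it will be invoked only afterwards, to extract a cofinal branch through the now-thin tree $T(\alpha)$.
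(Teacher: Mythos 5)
Your proof is correct and relies on the same two ingredients as the paper's argument --- the $GIS_{\omega_1}$-clause of the Isomorphism Theorem (Theorem~\ref{thm_IsomIS}) and the saturation of $NS_{\omega_1}$ coming from $SRP$ --- but the bookkeeping is arranged differently. The paper fixes a level $\eta$ (WLOG $\eta \ge 2$), writes each node $H$ at that level as $(H_\lambda)^{H_M}$ for some $M \in S$, un-collapses to $\sigma_M[H] = M \cap H_{\sigma_M(\lambda)}$, verifies (``straightforward to show'') that this intermediate model is still in $GIS_{\omega_1}$ and that the original witness $T_M$ also witnesses $M \cap H_{\sigma_M(\lambda)} \in IS_{\omega_1}$, and then applies the Isomorphism Theorem to those intermediate models at level $\eta$. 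You instead apply the Isomorphism Theorem to the \emph{top-level} models $M(H), M(H') \in S$ directly, exploiting the observation that distinct level-$\beta$ nodes must have $\trianglelefteq$-incomparable tops (a transitive $ZF^-$ model has at most one hereditary initial segment of each height). This trades the paper's project-and-re-verify-$GIS_{\omega_1}$ step for a short combinatorial lemma about the tree, and has the small advantage that the models fed into Theorem~\ref{thm_IsomIS} are all elementary in the single fixed $(H_\theta, \in, \Delta)$ rather than in the varying $H_{\sigma_M(\lambda)}$'s; this avoids the unaddressed point in the paper of whether the Isomorphism Theorem applies as stated to guessing models sitting elementarily inside different ambient $H_\lambda$'s. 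Both routes are sound; yours arguably isolates more cleanly the single place where the Isomorphism Theorem is needed, and as you note in passing it even bounds arbitrary antichains of $T(\alpha)$, though only the width bound is used.
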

\begin{proof}
Fix such an $\alpha$ and a level $\eta < \omega_2$ of the tree $T(\alpha)$.  Note that if $H$ is at the $\eta$-th level, then there is some $M \in S$ such that $H = (H_\lambda)^{H_M}$ where $\lambda$ is the $\eta$-th regular cardinal of $H_M$ (or $\lambda = H_M \cap ORD$).  Without loss of generality we assume $\eta \ge 2$; then it is straightforward to show that $\sigma_M [H] = M \cap H_{\sigma_M(\lambda)} \in GIS_{\omega_1}$ and that the set $T_M$---which witnesses that $M \in IS_{\omega_1}$---also witnesses that $M \cap H_{\sigma_M(\lambda)} \in IS_{\omega_1}$.  
\begin{com}
SHOULD PUT THAT IN A LEMMA?
\end{com}

Suppose for a contradiction that level $\eta$ had at least $\omega_2$-many distinct nodes $\langle H_\xi \ | \ \xi < \omega_2 \rangle$, and say $T_\xi \subset \omega_1$ witnesses that $H_\xi \in IS_{\omega_1}$.  Note all the $H_\xi$s have the same intersection with the reals (namely $\phi[\alpha]$; so they have the same intersection with $H_{\omega_1}$ as well).  For any distinct pair $\xi$ and $\xi'$, since $H_{\xi} \ne H_{\xi'}$ then $T_\xi \cap T_{\xi'}$ is nonstationary by Theorem \ref{thm_IsomIS}.  But then $\{ T_\xi \ | \ \xi < \omega_2 \}$ would be an $\omega_2$-sized antichain for $NS_{\omega_1}$, contradicting the fact that $NS_{\omega_1}$ is saturated.  
\end{proof}
Let $\bar{I}$ be the projection of $I$ to $\omega_2$.
\begin{NonGlobalClaim}\label{clm_TreeHeightOmega2}
For $\bar{I}$-measure one many $\alpha < \omega_2$, the tree $T(\alpha)$ has height $\omega_2$.
\end{NonGlobalClaim}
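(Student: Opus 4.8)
The plan is to run the argument of Claim~\ref{clm_AtLeastOmega2} almost verbatim to get, for $\bar I$-measure one many $\alpha$, that the models listed in $T(\alpha)$ have ordinal-heights cofinal in $\omega_2$, and then to deduce that the \emph{tree} $T(\alpha)$ has height $\omega_2$ by a counting argument based on the width bound of Claim~\ref{clm_ThinTree}.

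For the first step I would set $s_\alpha:=\sup\{\, ht(H)\ |\ H\in T(\alpha)\,\}$ and note $s_\alpha\le\omega_2$, since every element of $T(\alpha)$ is some $H_M$ with $|H_M|=\omega_1$. Then, assuming toward a contradiction that $\{\, M\in S\ |\ s_{M\cap\omega_2}<\omega_2\,\}$ is $I$-positive, I would define $f\colon\omega_2\to\omega_2$ by $f(\alpha)=s_\alpha$ when $s_\alpha<\omega_2$ and $f(\alpha)=0$ otherwise, use that $I$ bounds its completeness to get $C\in\breve I$ with $otp(M\cap ORD)=ht(H_M)>f(M\cap\omega_2)$ for all $M\in C$, and pick $\hat M$ in the intersection of $C$ with the $I$-positive set, obtaining $ht(H_{\hat M})>s_{\hat M\cap\omega_2}\ge ht(H_{\hat M})$ --- a contradiction. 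So for $\bar I$-measure one many $\alpha$ we have $s_\alpha=\omega_2$.

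Then I would fix such an $\alpha$ and argue as follows. The set $\{\, ht(H_M)\ |\ M\in S,\ M\cap\omega_2=\alpha\,\}$ is unbounded in $\omega_2$, hence of size $\omega_2$ (as $\omega_2$ is regular); choosing one model $H_M$ realizing each such height exhibits $\omega_2$ pairwise distinct nodes of $T(\alpha)$, so $T(\alpha)$ has at least $\omega_2$ nodes. On the other hand, by Claim~\ref{clm_ThinTree} every level of $T(\alpha)$ has fewer than $\omega_2$ nodes, so a tree $T(\alpha)$ of height $\eta<\omega_2$ would have at most $|\eta|\cdot\omega_1<\omega_2$ nodes. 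Hence $T(\alpha)$ has height at least $\omega_2$, and combined with the already-noted bound $\le\omega_2$, exactly $\omega_2$.

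The point requiring care is the first step, where one genuinely needs that $I$ bounds its completeness --- otherwise the tower could concentrate on structures $H_M$ of uniformly bounded height below $\omega_2$, and nothing would force $s_\alpha$ up to $\omega_2$. The key (and easy) move in the second step is to use Claim~\ref{clm_ThinTree} purely as an upper bound on the \emph{total} number of nodes of $T(\alpha)$, instead of trying to produce a cofinal branch directly --- the latter is instead what $TP(\omega_2)$ will be invoked for afterwards.
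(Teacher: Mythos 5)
Your proof is correct, and its first step is exactly what the paper does: the text simply asserts that Claim~\ref{clm_AtLeastOmega2}'s proof can be repeated verbatim, giving that $\{\,ht(H_M)\ :\ M\in S,\ M\cap\omega_2=\alpha\,\}$ is cofinal in $\omega_2$ for $\bar I$-measure-one many $\alpha$. Your second step --- the counting argument using Claim~\ref{clm_ThinTree} to pass from ``the ordinal heights are cofinal in $\omega_2$'' to ``the tree $T(\alpha)$ has height $\omega_2$'' --- is an extra observation that the paper's one-line proof elides, and it is in fact needed: as the proof of Claim~\ref{clm_ThinTree} makes explicit, the level of a node $H_M$ in $T(\alpha)$ is the order type of $Card^{H_M}$, not $ht(H_M)$, and since the gaps between consecutive $H_M$-cardinals above $\alpha=\omega_2^{H_M}$ are not uniformly bounded, cofinality of the $ht(H_M)$ does not by itself yield cofinality of the levels. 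Your counting argument (the $\omega_2$-many distinct ordinal heights give $\omega_2$-many distinct $H_M$'s, hence $\omega_2$-many nodes, which is more than a tree of height $\eta<\omega_2$ with levels of size $\le\omega_1$ can hold) closes this cleanly. So your argument is essentially the paper's, carried out more carefully and completely.
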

\begin{proof}
The proof of Claim \ref{clm_AtLeastOmega2} can be repeated verbatim.
\end{proof}

So by Claims \ref{clm_ThinTree} and \ref{clm_TreeHeightOmega2}, for $\bar{I}$-measure-one many $\alpha< \omega_2$, $T(\alpha)$ is a thin tree of height $\omega_2$.  Fix such an $\alpha$.  By $TP(\omega_2)$, $T(\alpha)$ has a cofinal branch.  The union of this branch is a transitive ZFC model $W$ of height $\omega_2$.  Now $W \cap \mathbb{R} = \phi[\alpha]$; so in particular $\mathbb{R} - W \ne \emptyset$ and so by Corollary \ref{cor_ToVelick}, $S := [\omega_2]^\omega - W$ is projective stationary.  Let $\tilde{S}:= \{  N \in [H_{\omega_2}]^\omega \ | \ N \cap \omega_2 \in S \}$; then $\tilde{S}$ is projective stationary in $[H_{\omega_2}]^\omega$.\footnote{This is standard.  If $T \subset \omega_1$ is stationary, then $S_T:= \{ Z \in [\omega_2]^\omega \ | \ Z \cap \omega_1 \in T  \}$ is stationary by the projective stationarity of $S$.  Let $\mathcal{A}$ be a structure on $H_{\omega_2}$ in a countable language; we need to find an elementary substructure of $\mathcal{A}$ whose intersection with $\omega_1$ is in $T$.  Pick any $Z \in S_T$ such that $Sk^{\mathcal{A}}(Z) \cap \omega_2 = Z$ (this holds for all but nonstationarily many $Z \in S_T$).  Then $Sk^{\mathcal{A}}(Z) \in \tilde{S}$ is the model we seek.}  

By $SRP(\omega_2)$, there is a continuous chain $\langle N_\xi \ | \ \xi < \omega_1 \rangle$ of elementary substructures of $H_{\omega_2}$ such that $N_\xi \in \tilde{S}$ for every $\xi< \omega_1$.  Let $Z:= \bigcup_{\xi < \omega_1} (N_\xi \cap \omega_2)$; it can easily be shown that $Z$ is an ordinal;\footnote{Let $\beta \in (\omega_2 \cap \bigcup_{\xi < \omega_1} N_\xi)$ and let $\zeta < \beta$; we need to see that $\zeta \in \bigcup_{\xi < \omega_1} N_\xi$.  Let $\xi^*$ be such that $\beta \in N_{\xi^*}$; since $N_{\xi^*} \prec H_{\omega_2}$ there is a bijection $f: \omega_1 \to \beta$ such that $f \in N_{\xi^*}$.  Let $\xi' := f^{-1}(\zeta)$, and pick any $\xi^{''}$ such that $\xi' \in N_{\xi^{''}}$ and $\xi^* \le \xi^{''}$; then both $f$ and $\xi'$ are elements of $N_{\xi^{''}}$, so $\zeta \in N_{\xi^{''}}$.} in particular $Z$ is an \emph{element} of some $H \in T(\alpha)$.  This implies that $proj(H,Z):= \{ a \cap Z \ | \ a \in H \cap [H]^\omega   \}$ is a subset of $H$.  Also, $proj(H,Z)$ is stationary in $[Z]^\omega$, since $H \cap [H]^\omega$ is stationary by assumption.  Moreover, $C:=\{  N_\xi \cap \omega_2 \ | \ \xi < \omega_1 \} \subseteq S$ and $C$ is a club subset of $[Z]^\omega \cap S$.  So $proj(H,Z) \cap C$ is nonempty.  But $proj(H,Z)   \subseteq H \subset W$; yet $C \subseteq S$ and $S \cap W = \emptyset$, a contradiction.  This completes the proof of Theorem \ref{thm_SRP_TP_GIS}.

\section{\texorpdfstring{Consistency of $MM^+$ with a precipitous tower on $GIC_{\omega_1}$}{Consistency of MM-plus with a precipitous tower on GIC}}\label{sec_Consist_MM_GIC_Precip}

Now we prove Theorem \ref{thm_Consistent_GIC_Precip}.  First, we need a ``tower'' version of Proposition 7.13 from \cite{MattHandbook}.  

\begin{theorem}\label{thm_DualityTowers}
(modification of Proposition 7.13 from \cite{MattHandbook} for towers).

Suppose $\mathbb{Q} \in V$ is a poset and $1_{\mathbb{Q}} \Vdash$ ``$\delta$ remains inaccessible, $\langle \dot{U}_\lambda \ | \ \lambda < \delta \rangle$ is a tower of $V$-normal measures, and $ult(V, \dot{\vec{U}})$ is wellfounded''.  Suppose also that in $V$ there are functions $Q$, $h$, and for each $q \in \mathbb{Q}$ a function $f_q$ such that:
\begin{itemize}
 \item $Q$, $h$, and each $f_q$ each have bounded support in $V_\delta$;
 \item For every $\mathbb{Q}$-generic object $H$:
  \begin{itemize}
   \item $[Q]_{\dot{\vec{U}}_H} = \mathbb{Q}$; 
   \item $[h]_{\dot{\vec{U}}_H} = H$;
   \item For every $q \in \mathbb{Q}$:  $[f_q]_{\dot{\vec{U}}_H} = q$.
  \end{itemize}
\end{itemize}   
If $\vec{I} \in V$ is the tower derived from the name $\dot{\vec{U}}$ as in Definition \ref{def_TowerDerivedFromName}, then $\mathbb{P}_{\vec{I}}$ is precipitous, forcing equivalent to $\mathbb{Q}$, and generic ultrapowers by $\vec{I}$ are exactly those maps of the form $j_{\dot{\vec{U}}_H}: V \to ult(V,\dot{\vec{U}}_H)$ where $H$ is $(V, \mathbb{Q})$-generic. 
\end{theorem}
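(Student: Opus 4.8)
The plan is to adapt the proof of the Duality Theorem (Proposition 7.13 of \cite{MattHandbook}) to the tower setting: write down an explicit map from $\mathbb{Q}$ into $\mathbb{P}_{\vec{I}}$ and show it induces an isomorphism of Boolean completions. For each $q \in \mathbb{Q}$ fix a cardinal $\lambda_q < \delta$ bounding $supp(Q)$, $supp(h)$, and $supp(f_q)$, and set
\[
e(q) := (\lambda_q,\, B_q), \qquad B_q := \{\, M \in Z_{\lambda_q} \ | \ f_q(M) \in h(M)\,\}.
\]
First I would check that $e(q)$ really is a condition, i.e. $B_q \in I_{\lambda_q}^+$: by the definition of the derived tower it suffices to produce a $(V,\mathbb{Q})$-generic $H$ with $B_q \in (\dot{U}_{\lambda_q})_H$, and any $H \ni q$ works, since the \L o\'s theorem for the tower ultrapower together with the hypotheses $[f_q]_{\dot{\vec{U}}_H} = q$ and $[h]_{\dot{\vec{U}}_H} = H$ gives $B_q \in (\dot{U}_{\lambda_q})_H$ (here we use that $f_q$ and $h$ factor through $H_{\lambda_q}$, so the relevant tower-class membership at level $\delta$ is already decided at level $\lambda_q$). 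The same ``evaluate against all generics'' computation yields the basic algebra of $e$: for every generic $H$ and every $\lambda \geq \max(supp(f_q), supp(h))$ one has $B_q \in (\dot{U}_\lambda)_H \Leftrightarrow q \in H$ (identifying $B_q$ with its lift to $Z_\lambda$), so $q \leq q'$ gives $[e(q)] \leq [e(q')]$ and $q \perp q'$ is equivalent to $e(q) \perp e(q')$, once one works modulo $\vec{I}$ (i.e. after comparing at a common level).

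The crux is density of $e[\mathbb{Q}]$ in $\mathbb{P}_{\vec{I}}$. Given $(\lambda, S) \in \mathbb{P}_{\vec{I}}$, so $S \in I_\lambda^+$, the definition of the derived tower furnishes a generic $H$ with $S \in (\dot{U}_\lambda)_H$, and since $\dot{U}_\lambda$ is a $\mathbb{Q}$-name, some $q \in H$ forces ``$\check{S} \in \dot{U}_\lambda$'', i.e. $q \in H' \Rightarrow S \in (\dot{U}_\lambda)_{H'}$ for every generic $H'$. Comparing this with $B_q \in (\dot{U}_\lambda)_{H'} \Leftrightarrow q \in H'$ (after passing to a common level $\geq \max(\lambda,\lambda_q)$) shows that the appropriate lift of $B_q \setminus S$ lies in the relevant derived ideal, hence $e(q) \leq (\lambda, S)$ modulo $\vec{I}$. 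Thus $e$ is monotone, preserves and reflects incompatibility, and has dense image, all up to the natural congruence modulo $\vec{I}$; hence it induces an isomorphism $RO(\mathbb{Q}) \cong RO(\mathbb{P}_{\vec{I}})$, and in particular $\mathbb{P}_{\vec{I}}$ is forcing equivalent to $\mathbb{Q}$.

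Finally I would match up the generic objects. For $H$ generic over $V$ for $\mathbb{Q}$, put $G_H := \{ (\lambda, S) \in \mathbb{P}_{\vec{I}} \ | \ S \in (\dot{U}_\lambda)_H \}$; using that each $(\dot{U}_\lambda)_H$ is a $V$-normal ultrafilter and that $(\dot{U}_\lambda)_H$ is the canonical projection of $(\dot{U}_{\lambda'})_H$ for $\lambda \leq \lambda'$, one checks directly that $G_H$ is a filter, and genericity follows from density of $e[\mathbb{Q}]$ (a dense $D \in V$ pulls back under $e$ to a dense subset of $\mathbb{Q}$, which $H$ meets, and then the corresponding element of $D$ lies in $G_H$ by upward closure). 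By construction $proj(G_H, \lambda) = (\dot{U}_\lambda)_H$ for every $\lambda < \delta$, so the tower ultrapower $j_{G_H}$ is literally $j_{\dot{\vec{U}}_H}$. Conversely, if $G$ is $\mathbb{P}_{\vec{I}}$-generic then $H_G := \{ q \in \mathbb{Q} \ | \ e(q) \in G \}$ is $\mathbb{Q}$-generic and $G = G_{H_G}$, so the generic ultrapowers of $V$ by $\vec{I}$ are exactly the maps $j_{\dot{\vec{U}}_H}$ for $(V,\mathbb{Q})$-generic $H$; since $1_{\mathbb{Q}} \Vdash$ ``$ult(V,\dot{\vec{U}})$ is wellfounded'', each of these is wellfounded, i.e. $\mathbb{P}_{\vec{I}}$ is precipitous. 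The step I expect to be the main obstacle is the density argument together with its bookkeeping: keeping the supports $\lambda_q$ straight, carrying out the ``modulo $\vec{I}$'' comparisons at a common level, and confirming that the single condition $q$ extracted from the name $\dot{U}_\lambda$ produces an honest condition of $\mathbb{P}_{\vec{I}}$ refining $(\lambda, S)$, all while respecting the canonical-projection coherence between levels.
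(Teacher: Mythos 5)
Your proposal is correct and takes essentially the same approach as the paper's proof: both hinge on the condition $(\lambda_q, B_q)$ with $B_q = \{M : f_q(M) \in h(M)\}$ and the same \L o\'s computation, the only difference being that the paper passes to the separative quotient $\mathbb{P}'_{\vec{I}}$ and defines an order- and incompatibility-preserving map $\phi : \mathbb{P}'_{\vec{I}} \to ro(\mathbb{Q})$, $[(\lambda,S)]_{\simeq} \mapsto || j_{\dot{\vec{U}}_H} `` (\bigcup Z_\lambda) \in j_{\dot{\vec{U}}_H}(S) ||$, then verifies $\mathbb{Q} \subseteq \mathrm{range}(\phi)$, whereas you run the same argument in the opposite direction as a dense embedding $e : \mathbb{Q} \to \mathbb{P}_{\vec{I}}$, proving density of $e[\mathbb{Q}]$ from the definition of the derived ideals. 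Your matching of generics and the precipitousness conclusion also parallel the paper's closing sketch (its map $\ell : ult(V,\dot{\vec{U}}_H) \to ult(V,G)$ corresponding to your $G_H \leftrightarrow H_G$ correspondence).
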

\begin{proof}
First, we note that if $\vec{I}$ is a tower where each ideal $I_\lambda \subset \wp(Z_\lambda)$, then the poset  $\mathbb{P}_{\vec{I}}$ (as defined in section \ref{sec_TowersMeasuresIdeals}) is forcing equivalent to the poset obtained as follows:  Define an equivalence relation on $\mathbb{P}_{\vec{I}}=\{ (\lambda, S) \ | \ \lambda < \delta \text{ and } S \in I_\lambda^+  \}$ by:
\begin{equation}
\parbox{\MyBoxWidth \linewidth}{
$(\lambda, S) \simeq (\beta, T)$ iff $S^{Z_\eta} \bigtriangleup T^{Z_\eta} \in I_\eta$ for some (equivalently:  every) $\eta \ge max(\lambda, \beta)$.
}
\end{equation}

Let $\mathbb{P}'_{\vec{I}} := \mathbb{P}_{\vec{I}} / \simeq$ and partially order $\mathbb{P}'_{\vec{I}}$ in the natural way inherited from the partial ordering of $\mathbb{P}_{\vec{I}}$.\footnote{Another way to view the poset $\mathbb{P}'_{\vec{I}}$ is to consider the directed system of ``canonical liftings'' $\iota_{\lambda, \lambda'}: \wp(Z_\lambda)/I_\lambda \to \wp(Z_{\lambda'})/I_{\lambda'}$ (for $\lambda \le \lambda'$) defined by $[S]_{I_\lambda} \mapsto [S^{Z_{\lambda'}}]_{I_{\lambda'}}$.  Then $\mathbb{P}'_{\vec{I}}$ is the direct limit of this system.}

Now let $\vec{I}$ be the tower derived from the name $\dot{\vec{U}}$ as in the statement of the theorem.  Similarly to the way Proposition 7.13 from \cite{MattHandbook} is proved, we define a map $\phi:  \mathbb{P}'_{\vec{I}} \to ro(\mathbb{Q})$ by:
\begin{equation}
[(\lambda,S)]_{\simeq} \mapsto || j_{\dot{\vec{U}}_H} " (\bigcup Z_\lambda) \in j_{\dot{\vec{U}}_H}(S)   ||_{ro(\mathbb{Q})}
\end{equation}
It is straightforward to check that this map is well-defined and preserves order and incompatibility.  Further, identifying $\mathbb{Q}$ with its isomorphic copy in $ro(\mathbb{Q})$, the assumptions of the theorem imply that $\mathbb{Q} \subseteq range(\phi)$:  given $q \in \mathbb{Q}$, let $f_q$ be as in the statement of the theorem, and let $\lambda_q < \delta$ be the support of $f_q$ (and without loss of generality assume $\lambda_q$ is also greater than the support of $h$ and $Q$).  Then $\phi$ maps the condition $[ (\lambda_q, \{ M \in Z_{\lambda_q} \ | \ f_q(M) \in h(M)  \} )     ]_{\simeq}$ to $q$.  

Finally, we sketch why generic ultrapowers by $\vec{I}$ are exactly those embeddings of the form $j_{\dot{\vec{U}}_H}$ for some $H$ which is $(V, \mathbb{Q})$-generic.  First, suppose $G$ is $(V, \mathbb{P}'_{\vec{I}})$-generic, and let $H$ be the $(V, \mathbb{Q})$-generic obtained from $G$ and the map $\phi$ (or vice-versa; the argument is similar either way).  Define a map $\ell: ult(V, \dot{\vec{U}}_H) \to ult(V,G)$ by:  $[f]_{\dot{\vec{U}}_H} \mapsto [f]_G$.  Then one can show that $\ell$ maps onto $ult(V, G)$ and that $j_{G} = \ell \circ j_{\dot{\vec{U}}_H}$.  

\end{proof}

\begin{corollary}\label{cor_CreatingTowers}
Suppose $\vec{U} \in V$ is a tower of normal ultrafilters of inaccessible height $\delta$ and $j_{\vec{U}}: V \to_{\vec{U}} N_{\vec{U}}$ is the ultrapower.  Suppose $\mathbb{P} \in V_\delta$ and that $j_{\vec{U}} \upharpoonright \mathbb{P}: \mathbb{P} \to j_{\vec{U}}(\mathbb{P})$ is a regular embedding.  Let $G$ be $(V, \mathbb{P})$-generic, and let $\vec{I} \in V[G]$ be the tower of height $\delta$ induced by  $j_{\vec{U}}$ as in Definition \ref{def_TowerInducedBy_j}.  

Then in $V[G]$:  $\vec{I}$ is precipitous,  $\mathbb{P}_{\vec{I}}$ is forcing equivalent to $j_{\vec{U}}(\mathbb{P})/j_{\vec{U}} " G$, and generic ultrapowers of $V[G]$ by $\vec{I}$ are exactly those maps of the form $j_{\vec{U}}^{G*H}: V[G] \to N_{\vec{U}}[G][H]$ where $H$ is $j_{\vec{U}}(\mathbb{P})/ j_{\vec{U}} " G$-generic over $V[G]$.
\end{corollary}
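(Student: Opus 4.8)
The plan is to apply Theorem~\ref{thm_DualityTowers}, working over $V[G]$ (in the role of ``$V$''), with the quotient poset $\mathbb{Q} := j_{\vec{U}}(\mathbb{P})/j_{\vec{U}} " G$ in the role of ``$\mathbb{Q}$'' and the sequence of names $\langle \dot{U}^{G*\dot{H}}_\lambda \mid \lambda < \delta \rangle$ from (\ref{eq_U_GstarH}) in the role of ``$\dot{\vec{U}}$''.  This is exactly the right setup: $\mathbb{Q} \in V[G]$ (since $j_{\vec{U}}(\mathbb{P}) \in N_{\vec{U}}$ and $j_{\vec{U}} \upharpoonright \mathbb{P} \in N_{\vec{U}}$, the latter by (\ref{eq_AlwaysHoldExternalTowerUltrapower})), and by Definition~\ref{def_TowerInducedBy_j} the tower $\vec{I}$ \emph{is}, verbatim, the tower derived (in the sense of Definition~\ref{def_TowerDerivedFromName}) from this name over this forcing.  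So once the hypotheses of Theorem~\ref{thm_DualityTowers} are verified, that theorem will give the three asserted conclusions directly, modulo identifying ``$ult(V[G], \dot{\vec{U}}_H)$'' with ``$N_{\vec{U}}[G][H]$'' at the end.

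First I would check the hypothesis ``$1_{\mathbb{Q}} \Vdash$ ($\delta$ remains inaccessible, $\dot{\vec{U}}$ is a tower of $V[G]$-normal measures, and $ult(V[G], \dot{\vec{U}})$ is wellfounded)''.  Since $\mathbb{P} \in V_\delta$ and $\delta$ is inaccessible, $\delta$ stays inaccessible in $V[G]$; and since $j_{\vec{U}}(\delta) = \delta$ and $\delta$ is inaccessible in $N_{\vec{U}}$, the poset $j_{\vec{U}}(\mathbb{P})$ (hence $\mathbb{Q}$) has size $< \delta$ in $V$, so $\mathbb{Q}$ preserves inaccessibility of $\delta$.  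That $\mathbb{Q}$ forces $\langle \dot{U}^{G*\dot{H}}_\lambda \rangle$ to be a tower of $V[G]$-normal ultrafilters was already observed in the discussion preceding Definition~\ref{def_TowerInducedBy_j}.  For wellfoundedness: given $H$ generic for $\mathbb{Q}$, the \emph{factor map} $\ell : ult(V[G],\dot{\vec{U}}_H) \to N_{\vec{U}}[G][H]$ sending $[f]_{\dot{\vec{U}}_H} \mapsto j_{\vec{U}}^{G*H}(f)(s)$, where $s := j_{\vec{U}}^{G*H} " H_\lambda[G]$ is the seed at a sufficiently large level $\lambda$, is an $\in$-preserving (indeed elementary) map into the wellfounded model $N_{\vec{U}}[G][H]$ --- a set-generic extension of the wellfounded $N_{\vec{U}}$ --- and satisfies $\ell \circ j_{\dot{\vec{U}}_H} = j_{\vec{U}}^{G*H}$; so $ult(V[G],\dot{\vec{U}}_H)$ is wellfounded.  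Using (\ref{eq_AlwaysHoldExternalTowerUltrapower}) together with the fact (recorded after Definition~\ref{def_TowerInducedBy_j}) that $N_{\vec{U}} \cap j_{\vec{U}}^{G*H} " H_\lambda[G] = j_{\vec{U}} " H_\lambda$, one checks in the usual way that $\ell$ is also onto, hence an isomorphism; this isomorphism is what will identify generic ultrapowers with lifted maps at the end.

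Next I would produce the bounded-support functions $Q$, $h$, and $f_q$ ($q \in \mathbb{Q}$) witnessing $[Q]_{\dot{\vec{U}}_H} = \mathbb{Q}$, $[h]_{\dot{\vec{U}}_H} = H$, and $[f_q]_{\dot{\vec{U}}_H} = q$ for every $\mathbb{Q}$-generic $H$.  Transporting along $\ell$, it suffices to find $g \in V[G]$ of support $< \delta$ with $j_{\vec{U}}^{G*H}(g)(s)$ equal to, respectively, $\mathbb{Q} = j_{\vec{U}}(\mathbb{P})/j_{\vec{U}} " G$, the generic $H$, and the given $q$.  The building blocks: the constant function $M \mapsto \mathbb{P}$ represents $j_{\vec{U}}(\mathbb{P})$; the constant function $M \mapsto G$ represents the lifted $j_{\vec{U}}(\mathbb{P})$-generic $\tilde{G} := j_{\vec{U}}^{G*H}(G)$; and $M \mapsto G \cap M$ (for $M$ with $\mathbb{P} \subseteq M$) represents $j_{\vec{U}} " G = s \cap \tilde{G}$.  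Since $j_{\vec{U}} " \mathbb{P}$ is, by the regular-embedding hypothesis, a complete subforcing of $j_{\vec{U}}(\mathbb{P})$, both $\mathbb{Q}$ and the generic $H$ are definable over $N_{\vec{U}}[G][H]$ from these objects together with $j_{\vec{U}} " \mathbb{P}$; applying the relevant definable operations pointwise and invoking the Los theorem for the tower yields $Q$ and $h$.  For $f_q$: since $q \in \mathbb{Q} \subseteq j_{\vec{U}}(\mathbb{P}) \in N_{\vec{U}} = ult(V,\vec{U})$, fix $e \in V$ of support $\mu < \delta$ with $q = j_{\vec{U}}(e)(j_{\vec{U}} " H_\mu)$ and set $f_q(M) := e(M \cap V)$; the seed-fact above then gives $[f_q]_{\dot{\vec{U}}_H} = q$.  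Each of these functions manifestly has support $< \delta$.

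With the hypotheses verified, Theorem~\ref{thm_DualityTowers} yields at once that $\vec{I}$ is precipitous, that $\mathbb{P}_{\vec{I}}$ is forcing equivalent to $\mathbb{Q} = j_{\vec{U}}(\mathbb{P})/j_{\vec{U}} " G$, and that the generic ultrapowers of $V[G]$ by $\vec{I}$ are exactly the maps $j_{\dot{\vec{U}}_H}$ for $H$ generic for $\mathbb{Q}$ over $V[G]$; composing each with the isomorphism $\ell$ identifies it with $j_{\vec{U}}^{G*H} : V[G] \to N_{\vec{U}}[G][H]$, which is precisely the conclusion of the corollary.  I expect the main obstacle to be the third step --- checking that the representing functions genuinely lie in $V[G]$ with support below $\delta$, and that the pointwise-defined operations really compute $\mathbb{Q}$, $H$, and $q$ modulo the ultrafilters $\dot{U}^{G*\dot{H}}_\lambda$ --- i.e.\ carefully re-running the bookkeeping behind (the tower version of) Proposition~7.13 of \cite{MattHandbook} in the induced-tower setting.
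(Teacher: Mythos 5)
Your proposal is correct and follows essentially the same route as the paper: both reduce Corollary~\ref{cor_CreatingTowers} to Theorem~\ref{thm_DualityTowers} by exhibiting bounded-support representing functions $Q$, $h$, $f_q$ over $V[G]$ and using the seed identity $N_{\vec{U}} \cap j^{G*H}_{\vec{U}}{} " H_\lambda[G] = j_{\vec{U}}{}" H_\lambda$. The paper simply writes those functions concretely --- $Q(M') = \mathbb{P}/(G\cap M')$, $h(M')$ the generic obtained from the factorization $\mathbb{P} \cong (M'\cap\mathbb{P}) * (\mathbb{P}/(\dot G \cap M'))$ (using that $M' \cap \mathbb{P}$ is a regular subposet for measure-one many $M'$), and $f'_q(M') = f_q(M' \cap H^V_{\lambda_q})$ --- which is exactly what your ``apply the definable quotient and factor-generic operations pointwise to the representatives of $j_{\vec U}(\mathbb P)$, $j_{\vec U}{}"G$, $j_{\vec U}{}"\mathbb P$'' amounts to when unwound, so the differences are only in presentation.
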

\begin{proof}
Let $G$ be $(V, \mathbb{P})$-generic.  We check the conditions of Theorem \ref{thm_DualityTowers}; here $V[G]$ will play the role of the $V$ from Theorem \ref{thm_DualityTowers} and $j_{\vec{U}}(\mathbb{P})/j_{\vec{U}} " G$ will play the role of the $\mathbb{Q}$ from Theorem \ref{thm_DualityTowers}.

Work in $V[G]$.  For all $H$ which are $(V[G], j_{\vec{U}}(\mathbb{P})/j_{\vec{U}} " G)$-generic:  for every $\lambda < \delta$, there are $U_\lambda^{G*H}$-many $M'$ such that:
\begin{enumerate}
 \item $M' \cap V \in V$;\footnote{This holds for $U^{G*H}_\lambda$-many $M'$ because $N_{\vec{U}} \cap j^{G*H}_{\vec{U}} " H_\lambda[G] = j_{\vec{U}} " H_\lambda$ } denote this set $M$
 \item $V \models$ ``$M \cap \mathbb{P}$ is a regular subposet of $\mathbb{P}$''. 
\end{enumerate}
Since we assume $\mathbb{P} \in V_\delta$, there is some $\lambda_{\mathbb{P}} < \delta$ such that $\mathbb{P} \in H_{\lambda_{\mathbb{P}}}$.  Now consider the following functions defined in $V[G]$ on $A^\delta:= \{ M' \prec V_\delta[G] \ | \ M' \cap V \in V \text{ and } M' \cap \mathbb{P} \text{ is a regular subposet of } \mathbb{P} \}$:
\begin{itemize}
 \item $Q(M'):= \mathbb{P}/(G \cap M')$; note this equals $\mathbb{P} / (G \cap M' \cap H^V_{\lambda_{\mathbb{P}}}$)
 \item $h(M'):=$ the generic for $\mathbb{P}/(G \cap M')$ obtained from $G$ and the forcing equivalence between $\mathbb{P}$ and $(M' \cap \mathbb{P}) * (\mathbb{P}/(\dot{G} \cap M')$.  Note this only depends on $M' \cap H^V_{\lambda_{\mathbb{P}}}$. 
 \item For any $q \in j_{\vec{U}}(\mathbb{P})/G$:  note that $q \in V$ and there is some $f_q: V_\delta \to V$ with support $\lambda_q < \delta$ such that $q = [f_q]_{\vec{U}}$.  Then define (in $V[G]$) the function $f'_q$ by $M' \mapsto  f_q(M' \cap  H^V_{\lambda_q})$.
\end{itemize}

Note that each of these functions has bounded support in $V_\delta$ ($Q$ and $h$ have support $\lambda_{\mathbb{P}}$, and $f'_q$ has support $\lambda_q$).  It is straightforward to check that for every $H$ which is $(V[G], j_{\vec{U}}(\mathbb{P})/j_{\vec{U}} " G)$-generic:
 \begin{itemize}
  \item $[Q]_{\vec{U}^{G*H}} = j_{\vec{U}}(\mathbb{P})/j_{\vec{U}} " G$;
  \item $[h]_{\vec{U}^{G*H}} = H$
  \item For each $q \in j_{\vec{U}}(\mathbb{P})/j_{\vec{U}} " G$:  $[f_q]_{\vec{U}^{G*H}} = q$
 \end{itemize}
The conclusion then follows by Theorem \ref{thm_DualityTowers}.
\end{proof}

It is interesting to note that if $\vec{I} \in V[G]$ is as in Corollary \ref{cor_CreatingTowers} and $\delta$ is always moved by generic embeddings of $V$ by $\vec{I}$,\footnote{This is always the case if each $U_\lambda$ in the original tower concentrates on $\wp_\kappa(H_\lambda)$.} then  generic ultrapowers of $V[G]$ by $\vec{I}$ do not have $\mathbb{P}_{\vec{I}}$ as an element (by Lemma 4.3 of \cite{MR1472122}).  However these generic ultrapowers \emph{do} have a poset---namely $j_{\vec{U}}(\mathbb{P})/ j_{\vec{U}} " G$---which, from the point of view of $V[G]$, is forcing equivalent to $\mathbb{P}_{\vec{I}}$ (and all the generic ultrapowers even have a $V[G]$-generic for that poset).    

Now back to the proof of Theorem \ref{thm_Consistent_GIC_Precip}.  Suppose $\kappa$ is supercompact and $\delta > \kappa$ is inaccessible.  Let $Lav: \kappa \to V_\kappa$ be a Laver function for $\kappa$, and $\mathbb{P}$ the standard RCS iteration of length $\kappa$ which yields a model of Martin's Maximum as in \cite{MR924672};  this actually produces a model of $MM^{+\omega_1}$.  
In $V$ let $U$ be a normal measure on $\wp_\kappa(H_\eta)$ for some regular $\eta \ge \delta$ such that $j_U(Lav)(\kappa) = \dot{\mathbb{R}}_\delta$, where $\mathbb{R}_\delta$ is the poset from Theorem \ref{thm_GIC_persistent} and $\dot{\mathbb{R}}_\delta$ is the canonical $\mathbb{P}$-name for $(\mathbb{R}_\delta)^{V^{\mathbb{P}}}$.  Let $\vec{U}:= \langle U_\lambda \ | \ \lambda < \delta \rangle$ be the tower of normal measures produced from projections of $U$ to $\wp_\kappa(H_\lambda)$ for $\lambda < \delta$.  Let $j_{\vec{U}}: V \to N_{\vec{U}}$; recall $N_{\vec{U}}$ is closed under $< \delta$ sequences so in particular $j_{\vec{U}} \upharpoonright H_\lambda \in N_{\vec{U}}$ for every $\lambda < \delta$.  Since $\mathbb{P}$ has the $\kappa$-cc, then $j_{\vec{U}} \upharpoonright \mathbb{P} = id: \mathbb{P} \to j_{\vec{U}}(\mathbb{P})$ is a regular embedding, so the discussion before Definition \ref{def_TowerInducedBy_j} applies.  Fix some $G$ which is $(V,\mathbb{P})$-generic, and in $V[G]$ let $\vec{I}$ be the tower of ideals induced by $j_{\vec{U}}$ as in Definition \ref{def_TowerInducedBy_j}.  By Corollary \ref{cor_CreatingTowers}:
\begin{equation}
\parbox{\MyBoxWidth \linewidth}{
$\vec{I}$ is precipitous
}
\end{equation}

So we only have left to show that $\vec{I}$ concentrates on $GIC_{\omega_1}$.  First we note:

\begin{claim}\label{clm_j_infty_Lav_R}
$j_{\vec{U}}(Lav)(\kappa) = \dot{\mathbb{R}}_\delta$
\end{claim}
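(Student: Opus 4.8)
The plan is to exploit the fact that $\vec{U}$ is the tower of projections of the single supercompactness measure $U$, which provides a canonical factor embedding. For each $\lambda<\delta$, $U_\lambda$ is the image of $U$ under the map $M\mapsto M\cap H_\lambda$, so there is an elementary $k_\lambda\colon ult(V,U_\lambda)\to ult(V,U)$ with $k_\lambda\circ j_{U_\lambda}=j_U$; these are compatible with the maps $k_{\lambda,\lambda'}$ of the directed system whose direct limit is $N_{\vec{U}}$, and hence induce an elementary $k\colon N_{\vec{U}}\to ult(V,U)$ with $k\circ j_{\vec{U}}=j_U$. The first observation is that $k$ fixes the critical point: since $cr(j_{\vec{U}})=cr(j_U)=\kappa$, the routine computation gives $k\upharpoonright(\kappa+1)=\mathrm{id}$. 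Consequently $j_U(Lav)(\kappa)=k\bigl(j_{\vec{U}}(Lav)(\kappa)\bigr)$, and by the choice of $U$ the left side equals $\dot{\mathbb{R}}_\delta$, so $k\bigl(j_{\vec{U}}(Lav)(\kappa)\bigr)=\dot{\mathbb{R}}_\delta$.

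The second step is to show that $k$ fixes $\dot{\mathbb{R}}_\delta$; granting this, injectivity of $k$ immediately yields $j_{\vec{U}}(Lav)(\kappa)=\dot{\mathbb{R}}_\delta$. To this end one analyses $k$ below $\delta$. Since $\delta$ is inaccessible, each $j_{U_\lambda}(\kappa)$ is an ordinal of size $<\delta$ (the ordinals below it biject with the functions $Z_\lambda\to\kappa$ modulo $U_\lambda$, of which there are fewer than $\delta$), and a direct limit computation then gives $j_{\vec{U}}(\kappa)=\delta$ and $k\upharpoonright\delta=\mathrm{id}$; here one uses that $j_{\vec{U}}\upharpoonright H_\lambda\in N_{\vec{U}}$ for each $\lambda<\delta$ and the standard facts about seeds (for $\lambda<\delta$, $\lambda=[M\mapsto otp(M\cap\lambda)]_{\vec{U}}$, which $k$ sends to $otp(j_U `` \lambda)=\lambda$ by fineness of $U$). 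One then combines this with $\mathbb{R}_\delta\in H_{\delta^+}$ (Theorem~\ref{thm_GIC_persistent}) and the fact that $\mathbb{P}$ is small — so that the canonical name $\dot{\mathbb{R}}_\delta$ may be chosen of hereditary size $\le\delta$ and, in fact, inside the part of $N_{\vec{U}}$ on which $k$ acts as the identity (using also $\eta\ge\delta$, so that $ult(V,U)$ is $\delta$-closed and computes $H_{\delta^+}$ correctly) — to conclude $k(\dot{\mathbb{R}}_\delta)=\dot{\mathbb{R}}_\delta$.

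Alternatively one can bypass the factor map and argue directly with the Los theorem for towers: since $[M\mapsto M\cap\kappa]_{\vec{U}}=\kappa$ we have $j_{\vec{U}}(Lav)(\kappa)=[M\mapsto Lav(M\cap\kappa)]_{\vec{U}}$, and because the function $M\mapsto Lav(M\cap\kappa)$ has support bounded below $\delta$ (it depends only on $M\cap\kappa$), the tower ultrapower and $ult(V,U)$ attach the same object to its $\simeq_{\vec{U}}$-class, whence $j_{\vec{U}}(Lav)(\kappa)=[M\mapsto Lav(M\cap\kappa)]_U=j_U(Lav)(\kappa)=\dot{\mathbb{R}}_\delta$. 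Whichever route is taken, the assertion that requires genuine work is exactly that the factor embedding $k$ does not move $\dot{\mathbb{R}}_\delta$ (equivalently, that it does not move $j_{\vec{U}}(Lav)(\kappa)$), and this is where inaccessibility of $\delta$, the hypothesis $\eta\ge\delta$, and $\mathbb{R}_\delta\in H_{\delta^+}$ are all used; I expect this last point — pinning down the behaviour of $k$ on hereditarily-$\le\delta$-sized sets — to be the main obstacle.
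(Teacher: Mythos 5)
Your plan is the right one — pass to the factor map $k\colon N_{\vec{U}}\to ult(V,U)$ with $k\circ j_{\vec{U}}=j_U$, deduce $k\bigl(j_{\vec{U}}(Lav)(\kappa)\bigr)=\dot{\mathbb{R}}_\delta$, and then argue $k$ does not move $\dot{\mathbb{R}}_\delta$ — and you correctly identify that the whole content lies in controlling $k$ on hereditarily $\delta$-sized sets. But the step you gloss over is exactly where the proof lives, and the one concrete computation you offer in its place is wrong.

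You assert ``a direct limit computation then gives $j_{\vec{U}}(\kappa)=\delta$''. That is false here, and if it were true your argument could not be repaired. If $j_{\vec{U}}(\kappa)=\delta$ then $k(\delta)=k\bigl(j_{\vec{U}}(\kappa)\bigr)=j_U(\kappa)>\delta$, so $\delta\notin range(k)$ and $cr(k)=\delta$; in that case $k$ certainly does \emph{not} fix elements of $H_{\delta^+}^{N_{\vec{U}}}$ (it already moves the cardinal $\delta$ by which they are coded), so there is no hope of concluding $k$ fixes (the $k$-preimage of) $\dot{\mathbb{R}}_\delta$. The ``alternative'' Los-theorem argument has the same problem in disguise: saying that $ult(V,\vec{U})$ and $ult(V,U)$ ``attach the same object'' to the class of $M\mapsto Lav(M\cap\kappa)$ is literally the statement that $k$ fixes $j_{\vec{U}}(Lav)(\kappa)$ — it restates the claim rather than proving it, and bounded support alone gives nothing (every element of $N_{\vec{U}}$ is represented by a boundedly-supported function).

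The missing idea, which the paper uses, is to run the cardinality computation in the \emph{target} model and pull $\delta$ into $range(k)$ by elementarity: since $\dot{\mathbb{R}}_\delta\in range(k)$ and $\dot{\mathbb{R}}_\delta\in H_{\delta^+}^V=H_{\delta^+}^{ult(V,U)}$ (using $\eta\ge\delta$), we have $|\dot{\mathbb{R}}_\delta|^{ult(V,U)}=\delta$, hence $\delta\in range(k)$. Combined with the standard fact $k\upharpoonright\delta=\mathrm{id}$, this forces $cr(k)>\delta$ — equivalently $j_{\vec{U}}(\kappa)>\delta$, the opposite of what you wrote. Once $cr(k)>\delta$, the element $y:=k^{-1}(\dot{\mathbb{R}}_\delta)\in H_{\delta^+}^{N_{\vec{U}}}$ (which exists because $\dot{\mathbb{R}}_\delta\in range(k)$; note one need not argue separately that $\dot{\mathbb{R}}_\delta\in N_{\vec{U}}$) is coded by a subset of $\delta$, which $k$ fixes pointwise, so $k(y)=y$ and hence $y=\dot{\mathbb{R}}_\delta$. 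So: right scaffolding, but the pivotal step is not a routine computation, and the computation you substituted for it points in the wrong direction.
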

\begin{proof}
By standard arguments there is a $k: N_{\vec{U}} \to ult(V,U)$ such that $k \circ j_{\vec{U}} = j_U$ and $k \upharpoonright \delta = id$.  Now $\dot{\mathbb{R}}_\delta = j_U(Lav)(\kappa) = k \circ j_{\vec{U}}(Lav) (k(\kappa)) = k(j_{\vec{U}}(Lav)(\kappa))$; so $\dot{\mathbb{R}}_\delta \in range(k)$.  Recall from Theorem \ref{thm_GIC_persistent} that the poset $\mathbb{R}_\delta$ is always an element of $H_{\delta^+}$; so the canonical $\mathbb{P}$-name $\dot{\mathbb{R}}_\delta$ for $\mathbb{R}_\delta^{V^{\mathbb{P}}}$ is an element of $H^V_{\delta^+} = H^{ult(V,U)}_{\delta^+}$.  So $| \dot{\mathbb{R}}_\delta |^{ult(V,U)} = \delta$.  Then since $\dot{\mathbb{R}}_\delta \in range(k)$, we have $\delta = |\dot{\mathbb{R}}_\delta|^{ult(V,U)} \in range(k)$.  This implies that $cr(k) > \delta$  (equivalently, that $j_{\vec{U}}(\kappa) > \delta$) and that $k^{-1}(\dot{\mathbb{R}}_\delta) = \dot{\mathbb{R}}_\delta$.   
\end{proof}

Consider an arbitrary $H$ which is $(V[G], j_{\vec{U}}(\mathbb{P})/G)$-generic.  Let $H^*$ denote the $\kappa$-th component of $H$.  Now $N_{\vec{U}}[G][H^*] \models V_\delta[G] \in GIC_{\omega_1}$ because $H^*$ is $(N_{\vec{U}}[G], \mathbb{R}_\delta^{N_{\vec{U}}[G]})$-generic (note $V_\delta = V_\delta^{N_{\vec{U}}}$ because $N_{\vec{U}}$ is closed under $< \delta$ sequences from $V$).  Since $N_{\vec{U}}[G][H]$ is an outer model of $N_{\vec{U}}[G][H^*]$ with the same $\omega_1$, then Theorem \ref{thm_GIC_persistent} implies:

\begin{equation}\label{eq_VdeltaG_in_GIC}
N_{\vec{U}}[G][H] \models V_\delta[G] \in GIC_{\omega_1}
\end{equation}

By (\ref{eq_VdeltaG_in_GIC}) and (the transitivised variant of) Theorem \ref{lem_ClassesProject}:

\begin{equation}\label{eq_EveryLambdaLessDelta}
\parbox{\MyBoxWidth \linewidth}{
For every $V$-regular $\lambda \in [\kappa, \delta]$:  $N_{\vec{U}}[G][H] \models H_\lambda[G] \in GIC_{\omega_1}$.
}
\end{equation}






Since $j_{\vec{U}}^{G*H} \upharpoonright H_\lambda[G]$ is an element of $N_{\vec{U}}[G][H]$ for every $\lambda < \delta$ and the class $GIC_{\omega_1}$ is closed under isomorphism, then (\ref{eq_EveryLambdaLessDelta}) implies:
\begin{equation}\label{eq_InGIC}
\parbox{\MyBoxWidth \linewidth}{
For every $V$-regular $\lambda \in [\kappa,  \delta)$, $N_{\vec{U}}[G][H] \models$ ``$j^{G*H}_{\vec{U}} " H_\lambda[G]$ is an element of $GIC_{\omega_1}$''.
}
\end{equation} 
Since (\ref{eq_InGIC}) holds for arbitrary generic $H$, then by the definition of each $I_\lambda$:
\begin{equation}
\parbox{\MyBoxWidth \linewidth}{
For each $\lambda < \delta$, $I_\lambda$ concentrates on $GIC_{\omega_1}$.
}
\end{equation}
This concludes the proof of Theorem \ref{thm_Consistent_GIC_Precip}.

\section{Questions}\label{sec_Questions}
We end with some questions.

We proved that under $RP([\omega_2]^\omega)$, there is no presaturated tower which concentrates on $GIC_{\omega_1}$.  This suggests a couple of questions:
\begin{question}\label{question_Consistent_RP_Presat_GIS}
Is it consistent with $RP([\omega_2]^\omega)$ that there is a presaturated tower concentrating on $GIS_{\omega_1}$?  
\end{question}
\begin{question}
Is it consistent with ZFC to have a presaturated tower which concentrates on $GIC_{\omega_1}$?
\end{question}
One way to produce a presaturated tower on $GIS_{\omega_1}$ is to perform a ``Mitchell collapse'' so that an almost-huge cardinal becomes $\omega_2$; however $RP[\omega_2]^\omega$ fails in this model, so it does not provide an affirmative answer to Question \ref{question_Consistent_RP_Presat_GIS}.  

We also showed that $MM$ implies there is no presaturated tower on $GIS_{\omega_1}$, which suggests:
\begin{question}
Is it consistent with $MM$ that there is a presaturated tower concentrating on $GIU_{\omega_1}$?
\end{question}
\begin{question}
If the answer to either of the previous questions is ``yes'', can this tower be a \emph{stationary} tower?  Or any other kind of ``natural'' tower?
\end{question}
Finally, in Theorem \ref{thm_NoDefinablePrecipGIC} we showed there is no precipitous tower on $GIC_{\omega_1}$ which is \emph{definable} over $V_\delta$ (where $\delta$ is the height of the tower).  
\begin{question}
Suppose $NS_{\omega_1}$ is saturated.  Does this imply that there is no precipitous tower on $GIS_{\omega_1}$ which is \emph{definable} over $V_\delta$?  (Where $\delta$ is the height of the tower)
\end{question}

\begin{bibdiv}
\begin{biblist}
\bibselect{Bibliography}
\end{biblist}
\end{bibdiv}

\end{document}